
\documentclass[10pt, a4paper, reqno]{amsart}
\numberwithin{equation}{section}
\usepackage{amsmath,amssymb,amsthm}
\usepackage{xcolor,textcomp}
\usepackage{graphicx, epstopdf}
\usepackage{braket,amsfonts}
\usepackage{dsfont}
\usepackage{mathtools}
\usepackage{mwe}

\usepackage[top=2.85cm,bottom=2.85cm,left=2.75cm,right=2.75cm]{geometry}
\usepackage{hyperref}
\hypersetup{colorlinks=true,linkcolor=red,citecolor=blue,filecolor=magenta,urlcolor=cyan} 
\usepackage[capitalise]{cleveref}
\usepackage{comment}
\usepackage{tikz}
\usepackage{pgfplots}
\usepackage{stmaryrd}
\usepackage[mathscr]{euscript}
\usepackage[sort,nocompress]{cite}

\newtheorem{theorem}{Theorem}[section]
\newtheorem{lemma}[theorem]{Lemma}
\newtheorem{remark}[theorem]{Remark}

\newtheorem{definition}[theorem]{Definition}

\usepackage{amsopn}
\DeclareMathOperator*{\esssup}{ess\,sup}

\DeclareMathOperator{\en}{en}

\newcommand\B{\mathcal{B}}

\newcommand\C{\mathcal{C}}
 
\newcommand\V{\mathbf{V}}
\newcommand\R{\mathbb{R}}

\newcommand\Ss{\mathbb{S}}

\newcommand\veps{\varepsilon}

\newcommand \LL{\mathcal{L}}
\newcommand{\boldvphi}{\boldsymbol{\vphi}}  
\newcommand{\boldphi}{\boldsymbol{\phi}}  

\newcommand{\disp}{\displaystyle}

\newcommand{\uvect}{\mathbf{u}}
\newcommand{\Yvect}{\mathbf{Y}}
\newcommand{\qvect}{\mathbf{q}} 
\newcommand{\Div}{\textnormal{div}}

\newcommand{\F}{\mathcal{F}}

\newcommand{\vphi}{\varphi}
\renewcommand{\rho}{\varrho}

\def\dx{\,\textnormal{d}x}
\def\dt{\textnormal{d}t}
\def\d{\,\textnormal{d}}

\makeatletter
\@namedef{subjclassname@2020}{%
	\textup{2020} Mathematics Subject Classification}
\makeatother


\title[Multicomponent reactive flows in moving domains]{On the Multicomponent Reactive flows in moving domains}

\author[Bhandari, Kra{\v{c}}mar,  Ne{\v{c}}asov{\'{a}}, Yang]{Kuntal Bhandari$^{\dagger}$ 
	\and  Stanislav Kra{\v{c}}mar$^{\ddagger}$
	\and \v{S}\'{a}rka Ne\v{c}asov\'{a}$^{\S, \star}$
 \and Minsuk Yang$^{\P}$}
 \thanks{$^{\dagger}$Institute of Mathematics,  Czech Academy of Sciences, \v{Z}itn\'a 25, 11567 Praha 1, Czech Republic; \texttt{e-mail: bhandari@math.cas.cz}.}
\thanks{$^{\ddagger}$Department of Technical Mathematics, Faculty of Mechanical Engineering, Czech Technical University,  Resslova 307/9, 12000 Praha 2, Czech Republic; 
\texttt{e-mail: stanislav.kracmar@fs.cvut.cz}. }
\thanks{$^{\S}$Institute of Mathematics,  Czech Academy of Sciences, \v{Z}itn\'a 25, 11567 Praha 1, Czech Republic; \texttt{e-mail: matus@math.cas.cz}. }
\thanks{$^{\P}$Department of Mathematics, Yonsei University, Yonseiro 50, Seodaemungu, Seoul 03722, Republic of Korea; \texttt{e-mail: m.yang@yonsei.ac.kr}}
\thanks{$^{\star}$Corresponding author.}

\keywords{Multicomponent reactive flows,  viscous compressible  fluids,  time-varying domains}

\subjclass[2020]{35D30, 76N10, 76T30, 76V05}

\date{\today}

\begin{document}

\begin{abstract}
This paper is concerned with the existence of global-in-time weak solutions to the multicomponent reactive flows inside  a moving domain whose shape in time is prescribed.     
The flow is governed by the   3D compressible {\em Navier-Stokes-Fourier system} coupled with the {\em equations of species mass fractions}.      The fluid velocity is supposed to fulfill  the complete slip boundary condition, whereas the heat flux and species diffusion fluxes satisfy the conservative boundary conditions.   The existence of weak solutions is obtained by means of suitable approximation techniques. To this end, we need to rigorously analyze the penalization of the boundary behavior, viscosity and the pressure in the weak formulation. 
\end{abstract}

\maketitle

 

	\section{Introduction}
	
\subsection{General setting and  problem statement} 
The multicomponent  reactive flows arise in many physical applications such as combustion, atmospherical modeling,
astrophysics, chemical reactions, etc.    In this regard,  we mention   the works of laminar flows \cite{smooke1982solution,dixon1984computer},  supersonic flames  \cite{drummond2005numerical,kailasanath1989numerical}, epitaxial growth  \cite{roenigk1987low,kee1987application},  the reentry problems into the earth's atmosphere \cite{Anderson1989hypersonic}, and the references therein.
 The physical background for the modeling of mixtures can be found in \cite{Pr}, an approach closer to modern understanding of continuum mechanics and thermodynamics  in \cite{RaTa}. Let us also mention the book \cite{Giovangigli}, where a general model of compressible chemically reacting mixtures under very general conditions is analyzed from the mathematical point of view, however, only in a small neighborhood of a given static solution.  For more details concerning modelling, we refer to the article by Bothe and Druet \cite{BD}.
 
In the present work, we are going to study the existence of weak solutions to the multicomponent reactive flows in a time-dependent domain. 
The balance equations  associated with  multicomponent  reactive flows express the {\em conservation of mass}, {\em momentum}, {\em energy} and {\em conservation of species mass}; see,  for instance, the book by Giovangigli \cite[Chapter 2.2]{Giovangigli}.  More precisely, the system under study is given by 
	\begin{align}
		&\partial_t \rho + \Div_x ( \rho \uvect) = 0  ,   \label{continuity-eq} 
		\\
		&\partial_t (\rho \uvect) + \Div_x (\rho [\uvect \otimes \uvect])  + \nabla_x p    = \Div_x \Ss 
		 ,  \label{momentum-eq} \\
		&\partial_t  (\rho e ) +  \Div_x(\rho e\uvect)  + 
		\Div_x \qvect + p \, \Div_x \uvect  = \Ss : \nabla_x \uvect + \Div_x \left(\sum_{k=1}^n h_k \F_k \right),   \label{temp-eq} 
		\\
		& \partial_t(\rho Y_k) + \Div_x(\rho Y_k \uvect) = \Div_x \F_k + \rho \sigma_k , \ \ k=1, ... , n     .           \label{pois-eq}   
	\end{align}  
The  model is characterized by the state variables:  total mass  density $\rho$,  mass average flow velocity $\uvect$, absolute temperature $\theta$ and  species mass fractions $Y_k$ for  $k=1,...,n$. Here, $p=p(\rho, \theta)$ is the pressure, $\Ss$ is the viscous stress tensor, $e=e(\rho, \theta, \Yvect)$ denotes the total energy per unit mass, $\qvect$ is the heat flux, $\F_k$ denotes the diffusion flux, $h_k$ is the enthalpy and 
$\sigma_k$  is the production rate  of the $k$-th species.  

Here and in the sequel, we  denote 
\begin{align*}
	\Yvect:= (Y_1, ..., Y_n) .
\end{align*}

\vspace*{.1cm}

Our analysis is based on the following physical assumptions, motivated by several numerical experiments related to the proposed model (see, for instance, \cite{Klein2001asymptotic} by Klein et al.):

\begin{enumerate} 
\item[1.]	We consider that the  fluid is Newtonian, meaning that the viscous stress tensor $\Ss$ satisfies
	\begin{align}\label{stress_tensor}
		\Ss(\theta, \nabla_x \uvect) = \mu(\theta) \left( \nabla_x \uvect + \nabla^\top_x \uvect -\frac{2}{3} \Div_x \uvect \mathbb I  \right) +\eta(\theta) \Div_x \uvect \mathbb I
	\end{align}
	with shear viscosity coefficient $\mu>0$ and bulk viscosity coefficient $\eta\geq 0$. 
	
\smallskip

\item[2.] The heat flux $\qvect:=\qvect(\theta, \nabla_x \theta)$ is determined by  Fourier's law
\begin{align}\label{Fourier_law}
	\qvect (\theta, \nabla_x \theta)= - \kappa (\theta) \nabla_x \theta 
\end{align}
with the heat conductivity coefficient $\kappa>0$. 

\smallskip

\item[3.] The  diffusion fluxes of the species are determined through  Fick's empirical law
\begin{align}\label{Ficks_law}
	\F_k = \zeta_k(\theta) \nabla_x Y_k , \quad k=1, ... , n,
	\end{align}
where $\zeta_k>0$ stands for the diffusion coefficient of the $k$-th species. 

\smallskip

\item[4.] The species formation enthalpies  $h_k$ for  $k=1,...,n$ are supposed to be constants. 

\smallskip 

\item[5.] We hereby denote the {\em species mass density} by $\rho_k$ for each $k=1,...,n$, 
and  suppose that the model is consistent with the principle of mass conservation, that is 
\begin{align}\label{rho-1}
\rho = \sum_{k=1}^n \rho_k .
\end{align} 
Now,  in terms of  the  species mass density $\rho_k$ and total mass density $\rho$, the 
  the species mass fractions $Y_k$ can be interpreted as 
\begin{align*} 
Y_k = \frac{\rho_k}{\rho} , \ \ \  k=1,...,n. 
\end{align*}  
In other words, $\rho_k = \rho Y_k$
and   we have 
\begin{align}\label{bound-Y-k-1}
0\leq Y_k \leq 1 , 
\end{align} 
 for each $k=1,...,n$. Moreover, in accordance with the relation \eqref{rho-1}, one has 
\begin{align}\label{mass-conservation}
	 \sum_{k=1}^n Y_k = 1 . 
\end{align}
Finally, we mention that the species production rate $\sigma_k$ satisfies 
\begin{align}\label{production-conservation}
\sum_{k=1}^n \sigma_k = 0 ,
\end{align} 
 and the second law of thermodynamics requires the entropy production to be non-negative.
 
\end{enumerate}

		\vspace*{.2cm} 
	
Let us describe the moving domain where the set of equations \eqref{continuity-eq}--\eqref{pois-eq} are considered.  
We consider a regular domain $\Omega_0 \subset \mathbb R^3$ occupied by the fluid at the initial time $t=0$. Then, we investigate the domain $\Omega_\tau$ (at time $t=\tau$) described by a given velocity field $\V(t,x)$ with $t\geq 0$ and $x\in \mathbb R^3$. More specifically, when $\V$ is regular enough,  we solve the associated system of differential equations
\begin{align}\label{trajec-eq}
	\frac{d}{dt} \mathbf X (t,x) =  \V(t,x) , \quad t>0 , \quad \mathbf X(0,x)=x ,
\end{align}
and set 
\begin{align}\label{domains}
	\begin{dcases} 
		\Omega_\tau = \mathbf X(\tau, \Omega_0) , \quad  \Gamma_\tau : = \partial \Omega_\tau \, \text{ and} \\
		Q_\tau := \cup_{t\in (0,\tau)} \{t\} \times \Omega_t  \ \ \ \forall \tau \in (0, T]. 	
	\end{dcases}
\end{align}
We  assume that the volume of the domain cannot degenerate in time, meaning that 
\begin{align}\label{measurable-condition}
	\exists \, M_0>0 \ \text{ such that } \ |\Omega_\tau|\geq M_0 \quad \forall \tau \in [0,T].
\end{align}
Furthermore,  we make the following assumption for the velocity field $\V$, namely
 \begin{align}\label{condition-V}
    \Div_x \V = 0  \ \text{ in the neighborhood of } \Gamma_\tau  \quad \forall \tau \in [0,T].
 \end{align}
\begin{remark} 
The condition \eqref{condition-V} is not restrictive. 
In fact,  it has been indicated in  \cite[Remark 5.3]{Sarka-et-al-ZAMP} that for a general $\V \in  \C^1([0,T];\C^3_c(\mathbb R^3, \mathbb R^3))$, one 
can find $\mathbf w \in W^{1,\infty}(Q_T; \R^3)$ such that $(\V- \mathbf w)|_{\Gamma_\tau} = 0$ for all $\tau \in [0,T]$ and $\Div_x \mathbf w=0$ on some neighborhood of $\Gamma_\tau$;  see also \cite[Section 4.3.1]{Sarka-Kreml-Neustupa-Feireisl-Stebel}. 
\end{remark}




\vspace*{.2cm}


In the fixed domain, the existence of global-in-time solutions for the system \eqref{continuity-eq}--\eqref{pois-eq},  
supplemented with physically relevant constitutive relations, was established by Giovangigli \cite[Chapter 9, Theorem 9.4.1]{Giovangigli} on the condition that the initial data are sufficiently close
to an equilibrium state. A simplified version of the  same problem with large initial data was  addressed in \cite{Trivisa-ARMA,Trivisa-Acta-sinica} in the one-dimensional case  and with one irreversible chemical reaction. In the three-dimensional situation, the existence theory of  the corresponding initial-boundary
value problem has been studied in \cite{Trivisa-Donatelli-ARMA,Feireisl-Novtny-proceedings} still with one irreversible chemical reaction.\footnote{A fast irreversible reaction of type  
\begin{equation*} 
A + B \rightarrow C, 
\end{equation*} 
was investigated by Bothe and Pierre in the case of reaction-diffusion system \cite{BP}, where it appears between species $A$ and $B$ with similar concentrations. Purely numerical and experimental results, involving irreversible reactions, were obtained for example in \cite{bothe-kroeger-warnecke,bubbles1,bubbles2}. See also recent result by  Mucha et al. \cite{MNS}.} 
However, in most practical situations,  one has to deal with several
species  which encounter  completely reversible  reactions  (see, for
instance, Bose \cite[Chapter 6]{Bose}).
In this regard, we notably mention the  work \cite{Eduard-CPAA-2008} by Feireisl et al., where
 the global-in-time weak solutions to  three-dimensional multicomponent reactive flows  have been explored for large  data. 
Some related works can be found for instance in \cite{degenerate-parabolic, MPZ_SIMA}.
 
 Coming to the context  of  compressible fluid flow systems in time-varying domains, 
we first address the work \cite{Sarka-Kreml-Neustupa-Feireisl-Stebel} by Feireisl et al., where the existence of weak solutions of the barotropic compressible  Navier-Stokes systems has been treated on time-dependent domains (as prescribed in \eqref{trajec-eq}--\eqref{domains}). Their approach is based on the penalization of the boundary behavior, viscosity and pressure in the weak formulation.  Later on, the existence of weak solutions to  the full {\em Navier-Stokes-Fourier} (NSF) systems in the time-dependent domain have been analyzed by   Kreml et al. \cite{sarka-aneta-al-JMPA,Sarka-et-al-ZAMP}.  We also mention that the compressible micropolar fluid on a time-varying domain with slip boundary conditions was considered in \cite{HBZ-2022}. Furthermore, the local-in-time existence of strong solutions to the compressible Navier-Stokes system on  moving domains was addressed in \cite{KNP-20}. The global well-posedness of compressible Navier-Stokes equations on a moving domain in the $L^p$-$L^q$ frame was investigated in \cite{KNP-20-springer}.  Recently, the authors in  \cite{Macha-Muha-Necasova-Roy-Srjdan}  studied the existence of a weak solution to a nonlinear fluid-structure interaction model with heat exchange where the shell is governed
 by linear thermoelasticity equations and encompasses a time-dependent domain that is filled with a fluid governed by the full NSF system. 
 In this regard, we also mention the work \cite{Kalousek2023existence}, where the authors
 analyzed  a system governing the interaction between two compressible mutually non-interacting fluids and a shell of Koiter-type that actually leads to  a time-dependent  domain filled by the fluids. 
Last but not least, we mention that the existence of weak solutions to the 3D Navier-Stokes-Fourier-Poisson system in time-varying domain has been explored in \cite{bhandari2023weak} with nonhomogeneous temperature  on the boundary where the heat flux does  not necessarily  satisfy the conservative boundary conditions. 
 We also refer to monograph \cite {KMNPW-L} where the barotropic and full system in the case of moving domain is described including all details and also with the introduction of ``new'' approximation scheme in the case of full system (it means slightly different approximation scheme according to moving domain than in the work of Feireisl and Novotn\' y \cite{Feireisl-Novotny-book}.

\subsection{Constitutive relations}\label{Section-Constitutive}

Let us prescribe the constitutive relations  associated to the system \eqref{continuity-eq}--\eqref{pois-eq}.

\subsubsection{Gibb's equation}   In agreement with the basic principles of statistical mechanics (e.g. \cite{Gallavotti} by Gallavotti), the  pressure   $p(\rho, \theta)$, 	specific entropy  $s(\rho, \theta, \Yvect)$, the specific internal energy $e(\rho, \theta, \Yvect)$ and the species mass fractions $Y_k$, for $k=1,...,n$,  
 are interrelated through the  Gibb's equation 
\begin{align}\label{equ-gibbs}
	\theta D s  = D e + p D \left(\frac{1}{\rho}\right) - \sum_{k=1}^n g_k D Y_k ,
\end{align}
where   $D$ stands  for the total differential with respect to the state variables $\rho$, $\theta$ and $Y_k$,   
and 
\begin{align}\label{retaion-g_k}
	g_k = h_k - \theta s_k , \quad k=1, ... , n, 
\end{align} 
(see \cite[Chapter 2.6.1]{Giovangigli} by Giovangigli).  Furthermore,  the species formation enthalpies $h_k$ and entropies $s_k$ are taken to be constants.

\subsubsection{Transport coefficients}  
As it has been addressed in \cite{becker1966gasdynamik},  the viscosity of a gas is independent of the density. Indeed, 	we consider the viscosity coefficients $\mu$ and $\eta$ to be continuously differentiable functions of the temperature, namely $\mu, \eta \in \C^1([0,+\infty))$ with
\begin{equation}\label{hypo-mu}
\begin{aligned}
&0< \underline \mu (1+\theta) \leq \mu(\theta) \leq \overline \mu (1+\theta), \quad \sup_{\theta \in [0,+\infty)} |\mu^\prime(\theta)| \leq \overline m , \ \ \text{and} \\
&  0\leq  \underline \eta (1+\theta) \leq \eta(\theta) \leq \overline \eta (1+\theta) . 
\end{aligned}
  \end{equation} 
In the above, all the quantities  $\underline \mu$, $\overline \mu$, $\overline m$, $\underline \eta$, $\overline \eta$ are real and positive.

	\vspace*{.1cm} 
	
 The heat conductivity coefficient $\kappa$ in  Fourier's law \eqref{Fourier_law} is assumed to be divided into two parts   	
\begin{equation}\label{hypo-kappa}
	\begin{aligned}
	&	\kappa(\theta) = \kappa_M (\theta) + \kappa_{R}(\theta) , \ \ \kappa_M , \, \kappa_R  \in \C^1([0,+\infty)) \ \, \text{such that}   \\
	& 0< \underline{\kappa_R} (1+\theta^3) \leq \kappa_R(\theta) \leq  \overline{\kappa_R}(1+\theta^3)   \, \ \text{and}\\
  & 0 < \underline{\kappa_M}(1+\theta) \leq \kappa_M (\theta) \leq \overline{\kappa_M}(1+\theta) ,
	\end{aligned}
\end{equation}
where $\underline{\kappa_R}$, $\overline{\kappa_R}$,
$\underline{\kappa_M}$ and $\overline{\kappa_M}$ are positive real numbers. 
In the above, the extra part 
$\kappa_R$ is the coefficient related to the effect of radiation (see, for instance, \cite{Oxenius} by Oxenius).

 \vspace*{.1cm}

	Finally, in agreement with  \cite[Proposition 7.5.7, Chapter 7]{Giovangigli}, the species diffusion coefficients coincide for all $k=1,..., n$. In what follows, we consider 
	\begin{align*}
		\zeta_k = \zeta \quad  \forall k \in\{1,..., n \},
	\end{align*}
and by means of the axioms in \cite[Chapter 7]{Giovangigli}, we assume that $\zeta\in \C^1([0,+\infty))$ such that 
\begin{align}\label{hypo-zeta}   
0< \underline{\zeta} \leq \zeta(\theta) \leq \overline \zeta (1+\theta) ,
\end{align}
with some  positive real numbers
 $\underline \zeta$ and $\overline \zeta$.

\subsubsection{Thermal equation of state}
  Motivated by Klein et al. \cite{Klein2001asymptotic},   	we consider that the pressure $p(\rho,\theta)$ is independent of the species concentrations. More precisely,   we consider 
	\begin{align}\label{hypo-press}
		p(\rho, \theta) = p_M(\rho, \theta) + p_R(\theta)  \quad \text{with } \, p_R(\theta) = \frac{a}{3} \theta^4 , \quad a>0 ,
	\end{align} 
where $p_R$ is the radiation part of the pressure $p$, which is particularly relevant in the high temperature regimes typical for combustion processes (see for instance, Bose \cite[Chapter 11.1]{Bose}), and $a$ is the so-called Stefan-Boltzmann constant. 

\vspace*{.1cm}

 Next, we consider  the internal energy $e(\rho, \theta, \Yvect)$  as 
	\begin{align}\label{hypo-energy}
		e(\rho, \theta, \Yvect) = e_M(\rho,\theta)    + e_R (\rho, \theta)   + \sum_{k=1}^n h_k Y_k     \quad \text{with } \, e_R (\rho, \theta) = \frac{a}{\rho}  \theta^4  ,
	\end{align}
and,  in accordance with  Gibb's law \eqref{equ-gibbs} and \eqref{hypo-energy}, we set 
	\begin{align}\label{hypo-entropy}
		s(\rho, \theta, \Yvect) = s_M(\rho,\theta) + s_R(\rho, \theta)   + \sum_{k=1} s_k Y_k  \quad \text{with }\, s_R(\rho, \theta) = \frac{4a}{3\rho} \theta^3  . 
	\end{align}

\vspace*{.2cm}
We further write  the following information. 
\begin{itemize} 
\item[--]	According to the hypothesis of thermodynamic stability, the molecular components $p_M$ and $e_M$  satisfy 
	\begin{align}\label{hypo-p_m}
		\frac{\partial p_M}{\partial \rho}  >  0 \quad \forall  \rho, \theta >0  , 
		\end{align}
	and there exists  some positive constant $c>0$ such that
	\begin{align}\label{hypo-e_m}
		0 < \frac{\partial e_M}{\partial \theta} \leq c    \quad \forall  \rho, \theta > 0  .
	\end{align}
 The latter one indicates that the specific heat at constant volume is uniformly bounded; we refer \cite{Eduard-CPDE} for more details.
 
	Moreover,   we have 
	\begin{align}\label{hypo-limit-e_m}
		\lim_{\theta \to 0^+} e_M(\rho, \theta) = \underline{e_M}(\rho) >0 \quad \text{for any fixed }  \rho >0
	\end{align}
	(see e.g. M\"{u}ller and Ruggeri \cite{Muller2013rational}), and 
		\begin{align}\label{hypo-bound_deri_e_m}
		\left| \rho \frac{\partial e_M}{\partial \rho}(\rho, \theta)\right| \leq c {e_M}(\rho, \theta) \quad \forall \rho, \theta >0.
	\end{align}

\vspace*{.1cm} 

\item[--] Motivated from the discussions in \cite[Chapter 1.4]{Feireisl-Novotny-book}, we suppose that there is a function 
\begin{align}\label{hypo_P}
P\in \C^1([0,\infty)), \ \ \ P(0) =0 , \ \ \ P^\prime(0) >0, 
\end{align}
and two positive constants $\underline Z$, $\overline Z$ such that 
\begin{align}\label{hypo-p_m_with_P}
	p_M(\rho, \theta) = \theta^{\frac{5}{2}} P\left(\frac{\rho}{\theta^{\frac{3}{2}}}\right)  \ \text{whenever }  0< \frac{\rho}{\theta^{\frac{3}{2}}} \leq \underline Z  \ \ \text{or} \ \ \frac{\rho}{\theta^{\frac{3}{2}}} > \overline Z.
\end{align}
Furthermore, $p_M$ and $e_M$ are interrelated through    the  general caloric equation (see e.g. Eliezer et al. \cite{Ghatak-et-al}):
\begin{align}\label{hypo-p_m_with_P-2}
p_M (\rho , \theta) = \frac{2}{3} \rho e_M(\rho, \theta) \ \ \ \text{for } \frac{\rho}{\theta^{\frac{3}{2}}} > \overline{Z}, 
\end{align}
which is a characteristic for mixtures of mono-atomic gases.

\vspace*{.1cm} 
\item[--] 
In order to get more information, we need to exploit the specific structure of the internal energy function $e(\rho, \theta, \Yvect)$.  In fact, we   prove that  
\begin{align}\label{lower_bound_rho_e}
	\rho (e_M(\rho, \theta) + e_R(\rho,\theta)) \geq a \theta^4 + \frac{3}{2} p_\infty \rho^{\frac{5}{3}},
\end{align}
where $p_\infty>0$ is given by \eqref{fact-2-1}. 
This can be shown in the following way. First, observe that 
\begin{align}\label{limit-e_M} 
\lim_{\theta\to 0^+} e_M(\rho, \theta) = \frac{3}{2\rho} \lim_{\theta\to 0^+} p_M(\rho,\theta) = \frac{3}{2\rho} \lim_{\theta\to 0^+} \theta^{\frac{5}{2}} P\left( \frac{\rho}{\theta^{\frac{3}{2}}}\right)  
=
\frac{3}{2}\rho^{\frac{2}{3}} \lim_{\theta\to 0^+}\frac{P(Z)}{Z^{\frac{5}{3}}} 
\end{align} 
(with $\disp Z=\frac{\rho}{\theta^{\frac{3}{2}}}$), where, in accordance with \eqref{hypo-limit-e_m},  we infer that
\begin{align}\label{fact-2-1}
	\lim_{Z\to \infty} \frac{P(Z)}{Z^{\frac{5}{3}}} = p_\infty > 0 . 
\end{align}
Beside that,  $e_M$ is a strictly increasing function of $\theta$  in $(0,\infty)$ for any fixed $\rho$; see \eqref{hypo-e_m}. This, together with the form of $e_R(\rho, \theta)$ in \eqref{hypo-energy}, and \eqref{limit-e_M} yields the required estimate \eqref{lower_bound_rho_e}.

%


\vspace*{.1cm} 
\item[--] 
In agreement with Gibb's relation \eqref{equ-gibbs}, the entropy component $s_M$ satisfies
\begin{align}\label{Gibbs-1}
	\frac{\partial s_M}{\partial \theta} = \frac{1}{\theta} \frac{\partial e_M}{\partial \theta} \ \  \ \text{and} \ \ \  
	\frac{\partial s_M}{\partial \rho} = -\frac{1}{\rho^2} \frac{\partial p_M}{\partial \theta}. 
\end{align}
We further write 
$s_M(\rho, \theta) = S(Z)$ with $\disp Z = \frac{\rho}{\theta^{\frac{3}{2}}}$,
and accordingly, one can deduce that 
\begin{align}\label{def-s-m} 
S^\prime(Z) =  -\frac{3}{2}\frac{\frac{5}{3} P(Z) - Z P^\prime(Z)}{Z^2} < 0 
\end{align}
in the degenerate area $\rho > \overline Z \theta^{\frac{3}{2}}$.  

We finally  assume that the third law of thermodynamics is satisfied, that is,
\begin{align}\label{third-law}
	\lim_{Z\to \infty} S(Z) = 0  .
\end{align}
\end{itemize} 

\subsubsection{Species production rate}
 The species production rates $\sigma_k$ are continuous functions of the absolute temperature $\theta$ and the species mass fractions $Y_1, ..., Y_n$. For simplicity, we assume that
\begin{align}\label{coeff-sigma-k}
	-\underline{\sigma} \leq \sigma_k(\theta, Y_1, ... , Y_n) \leq \overline{\sigma} , \quad  \forall \theta\geq 0, \ \ 0\leq Y_k \leq 1.
	\end{align}  
 We further suppose 
\begin{align}\label{sum-sigma-k}
	\sum_{k=1}^n   \sigma_k = 0 \ \ \text{and} \ \ \sum_{k=1}^n g_k \sigma_k \leq 0 ,
\end{align} 
in agreement with \eqref{production-conservation} and the latter one is enforced by the second law of thermodynamics. 
 
Finally, we assume 
\begin{align}\label{cond-3}
	\sigma_k(Y_1,...,Y_n) \geq 0 \  \text{ whenever }  Y_k=0 .
\end{align}

\subsection{Boundary conditions}\label{Sec-boundary}

 The system \eqref{continuity-eq}--\eqref{pois-eq} has to be supplemented with a suitable set of boundary conditions in order to obtain a mathematically well-posed problem.    


Let us  impose the complete slip boundary conditions 
	\begin{align}\label{Navier-slip}
		(\Ss \mathbf n) \times \mathbf n  = 0  \ \text{ on } \Gamma_t  \text{ for each } t \in [0,T], 
	\end{align}
	along with  the so-called 
	impermeability condition given by 
\begin{align}\label{imperm}
(\uvect - \V)\cdot \mathbf n = 0  \  \text{ on } \Gamma_t  \text{ for each } t \in [0,T],
\end{align}
	where $\mathbf n$ stands for the outer normal vector on  $\Gamma_t$.

\vspace*{.1cm}

The concept of weak solutions introduced in this paper requires the energy heat flux to satisfying  the conservative boundary conditions 
\begin{align}
\qvect \cdot \mathbf{n} = 0   \ \text{ on } \Gamma_t \text{ for each } t \in [0,T] .
\end{align}

	\vspace*{.1cm} 
 
We further consider that  the species diffusion fluxes satisfy 
	\begin{align}\label{boundary-species}
		\F_k \cdot \mathbf n =0 \  \text{ on } \Gamma_t \text{ for each } t \in [0,T] \ \text{and} \  k=1,...,n.
	\end{align}

	\subsection{Initial conditions}\label{Sec-initial}
	Let us write the initial conditions for our system \eqref{continuity-eq}--\eqref{pois-eq}.  

	We suppose 
\begin{align}\label{initial_conditions}
		\rho(0, \cdot)= \rho_0 \in L^{\frac{5}{3}}(\Omega_0) ,    \quad (\rho \uvect)(0,\cdot) = (\rho \uvect)_0 \quad \text{in } \Omega_0,
	\end{align}
	where we assume that the fluid density is zero outside the domain $\Omega_0$, more precisely
	\begin{align}\label{condition-density}
		\rho_0 \geq 0 \ \ \text{in }  \Omega_0, \ \ \rho_0 \neq 0 \  \text{ and }   \rho_0 =0 \ \text{ in }  \mathbb R^3 \setminus \Omega_0.
	\end{align}
 
	We write $\theta(0,\cdot)=\theta_0$ in $\Omega_0$  with  $\underline{\theta}\leq \theta_0 \leq \overline{\theta}$ for some  positive constants $\underline \theta$, $\overline \theta$,  and 
	\begin{align}\label{initial-Y-0-1} 
		Y_{k,0} = Y_k(0,\cdot) \ \text{ in } \Omega_0 , \ \  k=1, ... , n \ \text{ with } \Yvect_0 = (Y_{1,0}, ... , Y_{n,0}) .
	\end{align}
 Moreover, $Y_{k,0}$ verifies 
	\begin{align}\label{initial-Y-0-2} 
		0\leq Y_{k,0}(x) \leq 1 ,  \ \   \sum_{k=1}^n Y_{k,0}(x) =1  \,  \text{ for a.a. } x \in \Omega_0,  \ \text{and }\nabla_x Y_{k,0} \cdot \mathbf n =0, \ \ k=1,...,n . 
	\end{align}
	
We further denote 
	\begin{align}
		(\rho s)_0 = \rho_0 s(\rho_0, \theta_0, \Yvect_0) \in L^1(\Omega_0)  , 
	\end{align}
	and  assume that
\begin{align}\label{initial_energy}
	\mathcal E_0 := 	\int_{\Omega_0} \left( \frac{|(\rho  \uvect)_0|^2}{2\rho_0} +  \rho_0 e(\rho_0,\theta_0, \Yvect_0)   
 \right) < + \infty .
\end{align}

 \vspace*{.2cm}
 
	Our goal is to establish the global-in-time existence of weak solutions to the full system  \eqref{continuity-eq}--\eqref{pois-eq} in the domain $Q_T$ with the constitutive relations in Section \ref{Section-Constitutive},   boundary conditions in Section \ref{Sec-boundary} and initial conditions in Section  \ref{Sec-initial}.
		


	\section{Weak formulations}\label{Section-weak-target}
	
	In this section, we shall prescribe the expected weak formulation for system \eqref{continuity-eq}--\eqref{pois-eq}.
Throughout the section, we assume that the density $\rho$ remains ``zero" outside the   fluid domain $Q_T$.
 In fact, we will eventually show that if the initial density $\rho_0$ vanishes outside $\Omega_0$, then the same phenomenon remains unchanged  for the density  $\rho$   outside $\Omega_t$ for any $t\in (0,T]$ (see Section \ref{Section:Solid-part}).

 \subsection{Continuity equation}	 It is convenient to consider the continuity equation in the whole physical space $\mathbb R^3$ provided the density is supposed to be zero outside the fluid domain $\Omega_t$ for each $t\in [0,T]$. Specifically, the weak formulation of the continuity equation \eqref{continuity-eq} is supposed to be 
\begin{align}\label{weak-continuity}
	-	\int_0^T \int_{\Omega_t} \left(\rho \partial_t \vphi +    \rho \uvect \cdot \nabla_x \vphi \right)  =  \int_{\Omega_0} \rho_0(\cdot) \vphi(0, \cdot)  
\end{align}
for any  test function $\vphi \in \C^1( [0,T]\times \mathbb R^3; \mathbb R)$ with $\vphi(T,\cdot)=0$.   Of course, we assume that $\rho \geq 0$ a.e. in $\mathbb R^3$.

Moreover, the equation \eqref{continuity-eq}  will be satisfied in the sense of renormalized solutions introduced by DiPerna and Lions \cite{Lions-Diperna}: 
\begin{align}\label{weak-con-ren}
	-	\int_0^T \int_{\Omega_t} \rho B(\rho)\left( \partial_t \vphi +     \uvect \cdot \nabla_x \vphi \right) + 
	\int_0^T \int_{\Omega_t} b(\rho) \Div_x \uvect \vphi 
	=  \int_{\Omega_0} \rho_0 B(\rho_0)\vphi(0, \cdot)
\end{align}
for any   test function $\vphi \in \C^1( [0,T]\times \mathbb R^3; \mathbb R)$ with $\vphi(T,\cdot)=0$, $b \in L^\infty \cap \C([0,+\infty))$ such that $b(0)=0$ and $\displaystyle B(\rho) = B(1)+ \int_{1}^\rho \frac{b(z)}{z^2}$.

\subsection{Momentum equation}   We now write the expected weak formulation for the momentum equation \eqref{momentum-eq}.
It can be identified as 
\begin{align}\label{weak_form_momen}
-\int_0^T \int_{\Omega_t} \left( \rho \uvect \cdot \partial_t  \boldvphi   +  \rho[\uvect \otimes \uvect] : \nabla_x  \boldvphi + p(\rho, \theta) \Div_x \boldvphi   \right)     - \int_{\Omega_0}   (\rho \uvect)_0\cdot  \boldvphi(0, \cdot) \notag \\
 =- \int_0^T \int_{\Omega_t} \Ss : \nabla_x \boldvphi 
\end{align}
for  any test function $\boldvphi \in \C^1(\overline Q_T; \mathbb R^3)$ satisfying 
\begin{align*}
\boldvphi(T,\cdot)=0 \ \ \text{in } \Omega_T \ \text{ and }  	\boldvphi \cdot \mathbf n |_{\Gamma_t} = 0 \quad \text{for  any $t \in [0,T]$.}
\end{align*}
	The impermeability condition will  then be satisfied in the sense of trace, 
	\begin{align*}
\uvect \in L^2(Q_T;  \mathbb R^3),  \ \ \nabla_x \uvect\in L^2(Q_T;  \mathbb R^{3\times 3})  , \ \ (	\uvect - \V)\cdot \mathbf n |_{\Gamma_t} = 0 \quad \text{for any } t\in [0,T].   
	\end{align*}

\subsection{Species mass conservation}  Let us introduce the concept of entropy solutions for \eqref{pois-eq}. We hereby recall that $\F_k=\zeta(\theta)\nabla_x Y_k$ and the weak formulation will be 
		\begin{align}\label{eq-mass-species-1}
			-\int_0^T \int_{\Omega_t} \left[\rho Y_k \partial_t \vphi + \rho Y_k \uvect \cdot \nabla_x \vphi  - \zeta(\theta) \nabla_x Y_k \cdot \nabla_x \vphi   \right] =&  \notag \\
			\int_0^T \int_{\Omega_t}  \rho \sigma_k \vphi + \int_{\Omega_0} \rho_0 Y_{k,0} \vphi(0,\cdot)  \quad &\text{ for  } k=1, ..., n  ,
		\end{align}
to be satisfied for any test function $\vphi \in \C^1(\overline{Q}_T;\mathbb R)$ with $\vphi\geq 0$ and $\vphi(T,\cdot)=0$ in $\Omega_T$, together with  
\begin{align}\label{eq-mass-specs-2} 
	-\int_0^T \partial_t  \psi \int_{\Omega_t}  \rho G(\Yvect) + & \int_0^T \psi \int_{\Omega_t} \sum_{k=1}^n\zeta(\theta) G_0 |\nabla_x Y_k|^2 
\notag \\
& \qquad \qquad 
	\leq  
	\int_0^T \psi \int_{\Omega_t}  \sum_{k=1}^n \rho \frac{\partial G(\Yvect)}{\partial Y_k} \sigma_k + \int_{\Omega_0} \rho_0 G(\Yvect_0) \psi(0) 
 \end{align}
for any $\psi\in \C^1_c([0,T))$ such that $\psi\geq 0$, $\partial_t\psi\leq 0$ and any convex function $G\in \C^2(\mathbb R^n;\mathbb R)$ verifying
	\begin{align}\label{condition-ellipic-G}
	\sum_{k,l=1}^n \frac{\partial^2 G(\Yvect)}{\partial Y_k \partial Y_l} \xi_k \xi_l \geq G_0 |\xi|^2
	\end{align}
	for any $\xi:=(\xi_1,..., \xi_n) \in \mathbb R^n$ and for some $G_0>0$.
	
Moreover, in agreement with \eqref{bound-Y-k-1}--\eqref{mass-conservation}, $Y_k$  should satisfy  
\begin{align}\label{boundedness-Y_k}
0 \leq Y_k \leq 1 \ \ \text{in } Q_T, \ \  \forall k=1,...,n , \ \text{ and }\,
   \sum_{k=1}^n Y_k = 1.
\end{align}

\subsection{Entropy production} 
It is  more convenient to replace \eqref{temp-eq} by the total entropy balance.  Indeed, one can obtain 
	\begin{align}\label{entropy_eq}
		\partial_t (\rho s) + \Div_x (\rho s \uvect ) + \Div_x \left(\frac{\qvect}{\theta} - \sum_{k=1}^n s_k \F_k \right) = \frac{1}{\theta} \left( \Ss : \nabla_x \uvect - \frac{\qvect\cdot \nabla_x \theta}{\theta} - \sum_{k=1}^n \rho g_k \sigma_k    \right) ,
	\end{align}
cf. \cite[Chapter 2.6.2, 2.6.3]{Giovangigli}. 	 
	
Here, we  mention  that by virtue of the second law of thermodynamics, the entropy production rate 
	\begin{align}\label{entr-prod} 
		\vartheta_{\en}  =  \frac{1}{\theta} \left( \Ss : \nabla_x \uvect - \frac{\qvect\cdot \nabla_x \theta}{\theta} - \sum_{k=1}^n \rho g_k \sigma_k    \right) 
	\end{align} 
must be non-negative for any admissible process. In particular, the species production rates $\sigma_k$ for $k=1,...,n$ have  to satisfy \eqref{sum-sigma-k}.  

\vspace*{.1cm}

Following the strategy of  \cite{Feireisl-Novtny-proceedings}, we replace the equation \eqref{entropy_eq}  by the following entropy inequality: 
\begin{align}\label{entropy_ineq}
  \partial_t (\rho s) + \Div_x (\rho s \uvect ) + \Div_x \left(\frac{\qvect}{\theta} - \sum_{k=1}^n s_k \F_k \right) \geq \frac{1}{\theta} \left( \Ss : \nabla_x \uvect - \frac{\qvect\cdot \nabla_x \theta}{\theta} - \sum_{k=1}^n \rho g_k \sigma_k    \right) .
\end{align} 
Then the weak formulation  for \eqref{entropy_ineq} will be of the form:
\begin{multline}\label{weak-for-entropy}
		- \int_0^T \int_{\Omega_t} \left[\rho s\partial_t \vphi + \rho s\uvect \cdot \nabla_x \vphi + \left(\frac{\qvect}{\theta}-\sum_{k=1}^n s_k \F_k\right) \cdot \nabla_x \vphi \right]  - \int_{\Omega_0} (\rho s)_0 \vphi(0,\cdot)   \\
		\geq 
		\int_0^T\int_{\Omega_t} \frac{\vphi}{\theta}  \left( \Ss : \nabla_x \uvect  - \frac{\qvect\cdot \nabla_x \theta}{\theta} - \sum_{k=1}^n \rho g_k \sigma_k \right) 
\end{multline} 
	for any  test function $\vphi \in \C^1(\overline Q_T; \mathbb R)$ with $\vphi \geq 0$ and $\vphi(T,\cdot)=0$ in $\Omega_T$.

	Accordingly,   the entropy production $\vartheta_{\en}$ associated to a weak solution satisfies 
	 	\begin{align}
	 	\vartheta_{\en}  \geq   \frac{1}{\theta} \left( \Ss : \nabla_x \uvect - \frac{\qvect\cdot \nabla_x \theta}{\theta} - \sum_{k=1}^n \rho g_k \sigma_k    \right) 
	 \end{align} 
	rather than \eqref{entr-prod}. Observe that $\vartheta_{\en}>0$,  in agreement with the heat flux $\qvect=-\kappa(\theta) \nabla_x \theta$ (with $\kappa(\theta)>0$)  and the fact that  $\sum_{k=1}^n g_k \sigma_k \leq 0$ (see \eqref{sum-sigma-k}).

\subsection{Energy balance} Assuming all the quantities of concern are smooth and multiplying the momentum equation \eqref{momentum-eq} by $(\uvect -  \V)$, then integrating by parts w.r.t. the space variable and integrating the internal energy equation \eqref{temp-eq}, 
one has (in agreement with the boundary conditions and  the continuity equation)
	\begin{align}\label{energy-1}
	&	 \frac{\d}{\dt} \int_{\Omega_t}\left(\frac{1}{2} \rho |\uvect|^2  + \rho e_M(\rho, \theta) + \rho e_R(\rho, \theta) + \sum_{k=1}^n \rho h_k Y_k   \right) 
		  \notag \\
		  = &
	-\int_{\Omega_t} \big(\rho[\uvect \otimes \uvect]    : \nabla_x \V - \Ss : \nabla_x \V + p(\rho, \theta) \Div_x \V \big)
	+ \int_{\Omega_t} \partial_t(\rho \uvect) \cdot \V \ \ \ \forall t\in [0,T].
	\end{align}
From  the equation of species mass fraction \eqref{pois-eq}, one has 
\begin{align*}
	 \frac{\d}{\dt} \int_{\Omega_t} \sum_{k=1}^n \rho h_k Y_k  & = 
	  \int_{\Omega_t} \sum_{k=1}^n h_k \big[ -\Div_x (\rho Y_k \uvect) + \Div_x (\F_k) + \rho \sigma_k \big] \\
	  & =    \int_{\Omega_t} \sum_{k=1}^n   \rho  h_k \sigma_k 
\end{align*}
and consequently,  the equality \eqref{energy-1} reduces to 
  	\begin{align}\label{energy-2}
  	&	 \frac{\d}{\dt} \int_{\Omega_t}\left(\frac{1}{2} \rho |\uvect|^2  + \rho e_M(\rho, \theta) + \rho e_R(\rho, \theta)    \right)  
  		 \notag \\
  		= &
  		-\int_{\Omega_t} \big(\rho[\uvect \otimes \uvect]    : \nabla_x \V - \Ss : \nabla_x \V + p(\rho,\theta) \Div_x \V \big)
  		+ \int_{\Omega_t} \partial_t(\rho \uvect) \cdot \V - \int_{\Omega_t} \sum_{k=1}^n \rho h_k \sigma_k
  \end{align}
  for any $t\in [0,T]$.

Writing in a  formal way, we will have 
\begin{align}\label{energy-formal-main}
&	-	\int_0^T \partial_t \psi \int_{\Omega_t}  \left(\frac{1}{2} \rho |\uvect|^2  + \rho e_M(\rho,\theta)  + \rho e_{R}(\rho, \theta) 
\right) \notag \\
	 =&	-\int_0^T  \psi \int_{\Omega_t}  \big(\rho[\uvect \otimes \uvect]    : \nabla_x \V  - \Ss : \nabla_x \V  + p(\rho,\theta) \Div_x \V  \big) 
		+ \int_0^T \int_{\Omega_t} \partial_t(\rho \uvect) \cdot \V  \psi  
		\notag \\ 
 &  -\int_0^T \psi \int_{\Omega_t}  \sum_{k=1}^n \rho h_k \sigma_k +\psi(0) \int_{\Omega_0} 	  \left(\frac{1}{2} \frac{|(\rho \uvect)_{0}|^2}{\rho_{0}}  + \rho_{0} \left(e_M + e_{R,\xi}\right)(\rho_{0},\theta_{0})  
 \right)
	\end{align}
for any $\psi \in \C^1_c([0,T))$ such that $\psi\geq 0$ and $\partial_t \psi\leq 0$.

\section{Main result}   

\begin{definition}[Weak solution]\label{def}
We say that the quantity 
$(\rho,\uvect,\theta, \Yvect)$ is a weak solution  of the problem 
\eqref{continuity-eq}--\eqref{pois-eq} with the constitutive relations in Section \ref{Section-Constitutive}, boundary
	conditions \eqref{Navier-slip}--\eqref{boundary-species}
	and initial conditions \eqref{initial_conditions}--\eqref{initial_energy}
	 if the following items hold:
	\begin{itemize}
		\item 
		$\rho\in L^\infty(0,T;L^{\frac{5}{3}}(\mathbb{R}^3))$, $\rho\geq 0$, $\rho\in L^q(Q_T)$ with some certain $q>1$,
		
		\item $\uvect, \nabla_x\uvect \in L^2(Q_T)$, 
		$\rho \uvect \in L^\infty(0,T; L^1(\mathbb R^3))$,
		
		\item $\theta>0$ a.e. on $Q_T$, 
		$\theta \in L^\infty(0,T;L^4(\mathbb{R}^3))$,  $\theta, \nabla_x\theta, \log\theta, \nabla_x\log\theta \in L^2(Q_T)$,

  \item   $\rho s, \, \rho s \uvect  \in L^1(Q_T)$,
  
  \item  $Y_k \in L^\infty(Q_T),  \, \nabla_x Y_k \in L^2(Q_T)$ for each $1\leq k \leq n$,  and
  
\item the relations  \eqref{weak-con-ren}, \eqref{weak_form_momen}, \eqref{eq-mass-species-1}--\eqref{boundedness-Y_k}, \eqref{weak-for-entropy} and \eqref{energy-formal-main} are satisfied.
\end{itemize}
\end{definition}

Let us state the main result of the present paper.

\begin{theorem}[Main result]\label{Main-theorem}
Assume that $\Omega_0 \subset \mathbb{R}^3$ is a bounded domain of the class $\C^{2+\nu_0}$, $\nu_0>0$, and suppose that $\V\in \C^1([0,T]; \C^3_c(\mathbb{R}^3;\mathbb{R}^3))$ verifying \eqref{trajec-eq},  \eqref{condition-V}, and the hypotheses in Section \ref{Section-Constitutive} are satisfied. Then the multicomponent reactive flows given by  \eqref{continuity-eq}--\eqref{pois-eq} with boundary conditions in Section \ref{Sec-boundary} and initial conditions in Section \ref{Sec-initial} admits a weak solution in the sense of Definition \ref{def} on any finite time interval $(0,T)$.
\end{theorem}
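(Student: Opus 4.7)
The plan follows the penalization strategy of Feireisl--Kreml--Nečasová--Neustupa--Stebel combined with the multicomponent approximation of Feireisl--Petzeltová--Trivisa, adapted to the moving domain setting and to the species equations. First, I would embed the family $\{\Omega_t\}_{t\in [0,T]}$ into a large fixed ball $B \supset \overline{\cup_t \Omega_t}$ and introduce a three-level approximation indexed by parameters $\varepsilon, \delta, \omega > 0$: a variable viscosity coefficient $\mu_\omega(t,x) = \mu(\theta) \chi_\omega(t,x) + \omega$ that degenerates outside $\Omega_t$; a penalization $\frac{1}{\varepsilon} \chi_{B \setminus \Omega_t}(\rho \uvect - \rho \V)$ in the momentum equation enforcing $(\uvect - \V)\cdot\mathbf n = 0$ in the limit $\varepsilon \to 0$; and an artificial pressure $\delta(\rho^\Gamma + \rho^2)$ plus an artificial heat flux $\delta(\theta^{\Gamma -1} + \theta^{-1})$ to secure integrability. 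Next, at the fixed $\varepsilon, \delta, \omega$ level, one constructs approximate solutions using Faedo--Galerkin for momentum (on finite-dimensional subspaces of $W^{1,2}(B;\mathbb{R}^3)$ incorporating tangentiality at $\partial B$), coupled with a parabolic regularization $\varepsilon\Delta \rho$ of the continuity equation and the energy/species equations solved by a fixed-point argument as in Feireisl--Novotný. The species equations yield existence of $Y_k$ together with a maximum principle $0 \le Y_k \le 1$ using \eqref{cond-3}; the identity $\sum_k Y_k = 1$ is preserved by summing \eqref{pois-eq} and invoking \eqref{production-conservation}.

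The uniform a priori estimates are driven by the total energy balance \eqref{energy-2} and the entropy inequality \eqref{weak-for-entropy}, together with Gronwall applied after testing the momentum equation with $\V$. From \eqref{lower_bound_rho_e} and \eqref{hypo-press} one extracts $\rho \in L^\infty(0,T;L^{5/3})$ and $\theta \in L^\infty(0,T;L^4)$; the entropy dissipation yields $\nabla_x \theta^{3/2}, \nabla_x \log\theta \in L^2$ via the radiation part $\kappa_R \sim \theta^3$ and $\kappa_M \sim \theta$. The penalization bound $\frac{1}{\varepsilon}\int_0^T\int_{B\setminus\Omega_t} \rho|\uvect - \V|^2 \le C$ gives $(\uvect-\V)\cdot \mathbf n \to 0$ on $\Gamma_t$ in the limit. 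For the species, testing \eqref{pois-eq} against $\frac{\partial G}{\partial Y_k}$ with $G$ as in \eqref{condition-ellipic-G} and using \eqref{hypo-zeta} supplies a uniform $L^2$-bound for $\nabla_x Y_k$, producing the required entropy-type inequality \eqref{eq-mass-specs-2}.

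Passing to the limit $\omega \to 0$ decouples fluid and solid regions, and $\varepsilon \to 0$ enforces impermeability; these steps follow \cite{Sarka-Kreml-Neustupa-Feireisl-Stebel, sarka-aneta-al-JMPA} almost verbatim once the species equations are carried along, since $\rho Y_k$ enjoys the same renormalized compactness as $\rho$. The most delicate step is the final limit $\delta \to 0$, where one must establish strong convergence of $\rho$ in $L^1(Q_T)$ to identify the pressure. This is handled by the effective viscous flux identity of Lions--Feireisl: testing both momentum and continuity equations against $\nabla_x \Delta^{-1}[1_B \rho_\delta]$, one shows $\overline{p(\rho,\theta)\rho} - \overline{(\tfrac{4}{3}\mu + \eta)\Div_x \uvect \, \rho} = \overline{p(\rho,\theta)}\,\overline{\rho} - \overline{(\tfrac{4}{3}\mu+\eta)\Div_x \uvect}\,\overline{\rho}$ pointwise, adapted to the moving domain by means of the Bogovskii-type correctors subordinate to $\Omega_t$. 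Combined with the renormalized continuity equation \eqref{weak-con-ren} applied to $L_k(\rho) = \rho \log \rho \cdot T_k(\rho)$ (truncations) and monotonicity of $p_M$ in $\rho$ from \eqref{hypo-p_m}, this yields the oscillation defect bound and hence $\rho_\delta \to \rho$ a.e.; consequently the nonlinear terms $\rho Y_k \uvect$ and $\rho e(\rho,\theta,\mathbf Y)$ pass to the limit, completing the proof. The interplay between the radiative pressure $\frac{a}{3}\theta^4$, the degenerate molecular pressure governed by $P$ in \eqref{hypo-p_m_with_P}, and the moving-domain penalization will be the main technical obstacle, since the cut-off $\chi_\omega$ interferes with the Bogovskii corrector used to gain extra integrability $\rho \in L^q(Q_T)$ for some $q>5/3$ needed for equi-integrability of $p(\rho,\theta)$.
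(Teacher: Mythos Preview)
Your overall strategy---penalization on a fixed reference domain combined with the multicomponent approximation of Feireisl--Petzeltov\'a--Trivisa---is the paper's strategy, but two concrete elements of your scheme would not work as written.

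First, your volumetric penalization $\frac{1}{\varepsilon}\chi_{B\setminus\Omega_t}(\rho\uvect-\rho\V)$ is ineffective: since you take $\rho_0=0$ outside $\Omega_0$, the continuity equation propagates $\rho\equiv 0$ on $B\setminus\Omega_t$, so the penalization term and your claimed bound $\frac{1}{\varepsilon}\int_0^T\int_{B\setminus\Omega_t}\rho|\uvect-\V|^2\le C$ are both vacuous and yield no information on $(\uvect-\V)\cdot\mathbf n|_{\Gamma_t}$. The paper instead adds the \emph{surface} penalization $\frac{1}{\varepsilon}\int_0^T\int_{\Gamma_t}(\uvect-\V)\cdot\mathbf n\,\boldvphi\cdot\mathbf n$ in the momentum equation, which produces $\int_0^T\int_{\Gamma_t}|(\uvect-\V)\cdot\mathbf n|^2\le C\varepsilon$ directly from the energy balance.

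Second, and more substantially, your three-parameter scheme $(\varepsilon,\omega,\delta)$ omits the mechanism that controls the temperature in the solid region. After $\varepsilon\to 0$ the entropy and energy relations still contain terms on $B\setminus\Omega_t$ arising from the radiation pressure $\frac{a}{3}\theta^4$, the radiative energy $a\theta^4$, and the heat flux $\kappa(\theta)\nabla_x\theta\sim\theta^3\nabla_x\theta$; none of these carry a factor of $\rho$, so they do not vanish with the density. The paper therefore introduces three further parameters: $\xi$ and $\nu$ degenerate the radiation coefficient $a\mapsto a_\xi$ and heat conductivity $\kappa\mapsto\kappa_\nu$ outside $\Omega_t$, while an artificial absorption $\lambda\theta^5$ in the energy balance (and $\lambda\theta^4$ in the entropy inequality) secures a $\lambda$-weighted $L^5((0,T)\times B)$ bound on $\theta$. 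A tuned scaling $\lambda=\omega^{1/2}=\nu^{1/3}=\xi^{1/10}=h$ is then needed so that all solid-region residuals---e.g.\ $\int_{B\setminus\Omega_t}a_\xi\theta^4$, $\int_{B\setminus\Omega_t}\Ss_\omega:\nabla_x\V$, $\int_{B\setminus\Omega_t}\frac{\kappa_\nu}{\theta}\nabla_x\theta\cdot\nabla_x\vphi$---tend to zero as $h\to 0$. Your artificial heat flux tied to $\delta$ cannot replace this, since $\delta$ must survive the solid-domain limit to retain the $\delta\rho^\beta$ integrability for the final step. Relatedly, the paper's order is $\varepsilon\to 0$ first, then the joint limit in $(\omega,\nu,\xi,\lambda)$, and only then $\delta\to 0$; sending $\omega\to 0$ before $\varepsilon$, as you propose, would destroy the $W^{1,2}$ control of $\uvect$ on $B\setminus\Omega_t$ before impermeability is available.
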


\section{Penalization technique}\label{sec-penalized}
 

Let us choose $R>0$ such that 
\begin{align*}
\V|_{[0,T]\times \{ |x|>R \}} = 0 , \quad \overline \Omega_0 \subset \{ x \in \mathbb R^3 : |x| \leq  R \},
\end{align*}
and then take the reference domain $\B:= \{x\in \mathbb R^3 : |x| < 2R\}$.

\subsection{Mollification of the coefficients and initial data}\label{Section-hypo-molified}

We need to extend the following coefficients to the space-time  reference domain $(0,T)\times \B$. 

\smallskip 

\noindent
$\bullet$
The viscosity coefficients and species diffusion coefficient  are taken as 
\begin{align}\label{def-mu-omega}
& \mu_\omega (\theta) = f_\omega \mu(\theta) \in \C^\infty_c([0,T]\times \mathbb R^3) ,\\
\label{def-nu-omega}
& \eta_\omega (\theta) =  f_\omega\eta(\theta) \in \C^\infty_c([0,T]\times \mathbb R^3) \text{ and} \\
\label{def-sigma-omega}
&\zeta_\omega (\theta) = f_\omega \zeta( \theta) \in \C^\infty_c([0,T]\times \mathbb R^3)
\end{align}
with some function $f_\omega \in \C^\infty_c([0,T]\times \mathbb R^3)$  such that 
\begin{align}\label{choice-f-omega}
	\begin{dcases}
 0< \omega  \leq f_\omega \leq 1 \ \ \ \text{in } \, [0,T] \times \B ,	\\
f_\omega(t,\cdot) |_{\Omega_t} = 1 \ \text{ for each } t \in [0,T] \text{ and}\\
	 \|f_\omega \|_{L^{q}(((0,T) \times \B)\setminus Q_T) } \leq c \, \omega  \ \, \text{for some  $c>0$ and  $q\geq \frac{5}{3}$.} 
	\end{dcases}
\end{align}
From the above definitions, it is clear that 
\begin{align}
\mu_\omega , \  \eta_\omega , \ \zeta_\omega \to 0 \ \,  \text{ a.e. in } \,  ((0,T)\times \B )\setminus Q_T  \, \text{ as } \, \omega \to 0.
\end{align}

\noindent
$\bullet$
We also introduce  variable heat conductivity coefficient as follows:
\begin{align}\label{defi-kappa-nu}
	\kappa_\nu (\theta) = \chi_\nu  \kappa(\theta) , \ \text{ where } \chi_\nu = 1 \text{ in } Q_T \text{ and } \chi_\nu = \nu \text{ in } ((0,T)\times \B) \setminus Q_T.  
\end{align}

\noindent
$\bullet$
Similarly, we introduce a variable coefficient $a_\xi:= a_\xi(t,x)$ representing the radiative parts of the pressure, internal energy and entropy, given by 
\begin{align}\label{a-xi}
		a_\xi  = \chi_\xi  a , \ \text{ where } \chi_\xi = 1 \text{ in } Q_T \text{ and } \chi_\xi = \xi \text{ in } ((0,T)\times \B) \setminus Q_T.
\end{align} 
   We now set 
\begin{align}\label{pressure-pena}
&	p_{\xi, \delta} (\rho, \theta) = p_M(\rho, \theta) + \frac{a_\xi}{3} \theta^4  + \delta \rho^\beta  \quad \text{for } \beta \geq  4,  \ \delta>0 ,\\
	\label{energy-pena}
	& e_\xi(\rho, \theta, \Yvect) = e_M(\rho,\theta) + e_{R,\xi}(\rho, \theta)
	 + \sum_{k=1}^n h_k Y_k  \quad \text{with } \, e_{R,\xi}(\rho, \theta)
	 = \frac{a_\xi \theta^4}{\rho}, \text{ and}
	\\
	\label{entropy-pena} 
	 & s_\xi(\rho, \theta, \Yvect) = s_M(\rho, \theta)  + s_{R,\xi}(\rho, \theta)    + \sum_{k=1}^n s_k Y_k \quad \text{with } \, s_{R,\xi}(\rho, \theta) = \frac{4a_\xi \theta^3}{3\rho}   .
\end{align}


\smallskip
\noindent 
$\bullet$
Let us now define the modified initial data $\rho_{0,\delta}$, $(\rho \uvect)_{0,\delta}$, $\theta_{0,\delta}$ and $Y_{k,0, \delta}$, $k=1,...,n$.  
We consider $\rho_{0,\delta}$ such that
\begin{subequations}\label{pena-rho-initial}
\begin{align}
	\rho_{0,\delta} \geq 0, \ \ \rho_{0,\delta} \neq 0 \  \text{ in } \Omega_0, \ \ \rho_{0,\delta} =0 \ \text{ in }  \mathbb R^3\setminus \Omega_0, \ \ \int_{\B}\left(\rho^{\frac{5}{3}}_{0,\delta} + \delta \rho^{\beta}_{0,\delta} \right) \leq c , 
	\\ 
	\text{and } \
	\rho_{0,\delta} \to \rho_0 \ \text{ in } L^{\frac{5}{3}}(\B) \ \text{ as } \delta \to 0 , \ \ \ |\{ \rho_{0,\delta} <\rho_0 \}| \to  0 \ \text{ as } \delta \to 0. 
\end{align}
\end{subequations}

Next, the penalized initial data for the momentum part is taken in such a way that
\begin{align}
	(\rho \uvect)_{0,\delta} = \begin{dcases}
		(\rho \uvect)_0  \ &\text{if } \rho_{0,\delta} \geq \rho_0 , \\
		0                \       & \text{else} .
	\end{dcases}
\end{align} 

For the temperature part, we consider $0<\underline \theta \leq \theta_{0,\delta}\leq \overline \theta$,
 where  
$\theta_{0,\delta} \in L^\infty(\B) \cap \C^{2+\nu_0}(\B)$ with some exponent $\nu_0\in (0,1)$ where $\underline \theta$, $\overline \theta$ are positive real numbers as mentioned earlier. 

The perturbed initial data for the species mass fraction is verifying $Y_{k,0,\delta}\in \C^\infty(\B)$ with
\begin{equation}\label{modified-mass-initial}  
\begin{aligned}
	0\leq Y_{k,0,\delta}(x) \leq 1,  \ \ \sum_{k=1}^n Y_{k,0,\delta}(x) =  1  \ \ \text{for a.a. } x\in \Omega_0 \ \  \text{and } \nabla_x Y_{k,0,\delta}\cdot \mathbf n =0, \ \ k=1,...,n .
\end{aligned}
\end{equation}


Moreover, $\rho_{0,\delta}$, $\theta_{0,\delta}$ and $\Yvect_{0,\delta}:=(Y_{1,0,\delta}, ... , Y_{n,0,\delta}$) are considered in  such a way that 
\begin{align}
\int_{\Omega_0} \rho_{0,\delta}( e_M + e_{R, \xi})(\rho_{0,\delta}, \theta_{0,\delta}) \to  	\int_{\Omega_0} \rho_{0} (e_M + e_{R, \xi})(\rho_{0}, \theta_{0}),
\end{align}
\begin{align}
	\rho_{0,\delta} (s_M + s_{R, \xi})(\rho_{0,\delta}, \theta_{0,\delta}) &\to   \rho_{0} (s_M + s_{R, \xi})(\rho_{0}, \theta_{0}) \ \text{ weakly in }  L^1(\Omega_0)  , \\
 \label{pena-Y-initial} 
\text{and} \ \	 \rho_{0, \delta} Y_{k,0,\delta} & \to \rho_0 Y_{k,0} \ \text{ weakly in }  L^1(\Omega_0) ,
\end{align}
as $\delta \to 0$. 


\subsection{Strategy of the proof}


Below we   briefly  describe the strategy of the proof for \Cref{Main-theorem}.
\begin{enumerate}
\item[1.]
Motivated by the work of  Stokes and Carey in \cite{StoCar},    we add the penalized term 
 \begin{align}\label{pena-boundary-mom} 
\frac{1}{\veps} \int_0^T \int_{\Gamma_t}  (\uvect - \V ) \cdot \mathbf n \ \vphi \cdot \mathbf n \ \ \text{(for  $\veps > 0$ small)} 
\end{align} 
to  the momentum equation. 
In principle, this allows us to take care of the slip boundary conditions. Indeed, after obtaining  some uniform estimates w.r.t. $\veps$,  the  additional term \eqref{pena-boundary-mom} yields the boundary condition $(\uvect - \V)\cdot \mathbf n = 0$
on $\Gamma_t$ as $\veps\to 0$.    Accordingly, the reference domain $(0,T) \times \B$ is separated by an impermeable interface $\cup_{t\in (0,T)} \{t\} \times \Gamma_t$ to a {\em fluid domain} $Q_T$ and a {\em solid domain} $((0,T)\times \B) \setminus Q_T$. 
As a result, we need to deal with  the behavior of the solution in the solid domain. This is why we consider the variable coefficients $\mu_\omega$, $\eta_\omega$, $\zeta_\omega$, $\kappa_\nu$, $a_\xi$ as presented in Section \ref{Section-hypo-molified}. Moreover, similar to the existence theory developed in   \cite{Feireisl-Novotny-book}, we  introduce the artificial pressure $p_{\xi,\delta}$ with an extra term $\delta \rho^\beta$ (see \eqref{pressure-pena}), which gives  some  more  (regularity) information of the density.

\vspace*{.1cm}

\item[2.] We  add a term $\lambda \theta^5$ to the energy balance and $\lambda \theta^{4}$ to the entropy balance with  $\lambda >0$. These terms yield a control over the temperature in the solid domain. More  precisely, these extra penalized terms help to get rid of some unusual terms in  the solid domain due to the higher power of $\theta$.

\vspace*{.1cm}

\item[3.] Keeping $\veps, \omega, \nu, \lambda, \xi$ 
 and $\delta>0$ fixed, 
we use the existence theory for the multicomponent reactive flows   
in the fixed reference domain developed in \cite{Eduard-CPAA-2008}.  

\vspace*{.1cm}

\item[4.]  Taking the initial density
$\rho_0$ vanishing outside $\Omega_0$ and letting $\veps \to 0$
for fixed $\omega, \nu, \lambda, \xi, \delta > 0$,
 we obtain a  ``two-fluid''
system where the density vanishes in the solid part $\left((0,T)
\times \B \right) \setminus Q_T$. 
 Then, in order to get rid of the terms in $\left((0,T)
 \times \B \right) \setminus Q_T$, we make
  all other parameters tend to zero.  To this end,   proper scaling is required  to let the parameters 
  $\omega, \nu, \xi, \lambda$ go 
   to zero simultaneously. This has been rigorously prescribed in Section \ref{Section:Solid-part}. Finally,  we let $\delta\to 0$ in a standard fashion, as already been used in other related works for compressible fluids.
\end{enumerate}

\section{Weak formulations in the reference domain}

Let us  consider the set of equations 
\eqref{continuity-eq}--\eqref{pois-eq} in the reference domain $(0,T)\times \B$
and state the weak formulation of this  penalized problem.   We consider the velocity $\uvect$  vanishes on the boundary  $(0,T)\times \partial \B$, that is 
\begin{align}\label{boundary-extended-1}
\uvect|_{\partial \B}  =0 \  \text{ for each } t\in [0,T]  .
\end{align}
Furthermore, we assume that the heat flux $\qvect$ and species diffusion fluxes $\F_k$ satisfy 
\begin{align}\label{boundary-extended-2}
 \qvect \cdot \mathbf n = 0  \ \text{ and } \ \F_k \cdot \mathbf n  = 0 \ \text{ on } (0,T)\times \partial \B.    
\end{align}

\subsection{Continuity equation}  The weak formulation for the continuity equation reads as 
\begin{align}\label{weak-conti}
		-\int_0^T \int_{\B} \rho B(\rho)\left( \partial_t \vphi +    \uvect \cdot \nabla_x \vphi \right) 
+	\int_0^T \int_{\B} b(\rho) \Div_x \uvect \vphi 
	=  \int_{\B} \rho_{0,\delta} B(\rho_{0,\delta})\vphi(0, \cdot)
\end{align}
for any  test function $\vphi \in \C^1_c( [0,T)\times \B; \mathbb R)$ and any $b \in L^\infty \cap \C([0,+\infty))$
 such that $b(0)=0$ and $\displaystyle B(\rho) = B(1)+ \int_{1}^\rho \frac{b(z)}{z^2}$.

\subsection{Momentum equation}
The momentum equation is now represented by the following family of integral identities 
\begin{equation}
\begin{aligned}\label{weak-momen}
-	\int_0^T \int_{\B} \big( \rho \uvect \cdot \partial_t  \boldvphi   +  \rho[\uvect \otimes \uvect] : \nabla_x  \boldvphi + p_{\xi, \delta}(\rho, \theta) \Div_x \boldvphi   \big)   
 +\int_0^T \int_{\B} \Ss_\omega : \nabla_x \boldvphi \\
  +  \frac{1}{\veps} \int_0^T \int_{\Gamma_t}  (\uvect -\V) \cdot \mathbf n \ \boldvphi \cdot \mathbf n 
 =
    \int_{\B}   (\rho \uvect)_{0,\delta}\cdot  \boldvphi(0, \cdot) 
\end{aligned}
\end{equation}
	for  any  test function $\boldvphi\in \C^1_c([0,T) \times \B; \mathbb R^3)$,
	  where 
\begin{align}\label{stress_tensor-omega}
	\Ss_\omega(\theta, \nabla_x \uvect) = \mu_\omega(\theta, t,x) \left( \nabla_x \uvect + \nabla^\top_x \uvect -\frac{2}{3} \Div_x \uvect \mathbb I  \right) +\eta_\omega(\theta,t,x) \Div_x \uvect \mathbb I.
\end{align}

\subsection{Species mass conservation}  
 The weak formulation for the species mass fractions in the reference domain looks like 
\begin{align}\label{eq-mass-species-1-pena}
		-\int_0^T \int_{\B} \big[\rho Y_k \left(\partial_t \vphi +  \uvect \cdot \nabla_x \vphi \right)  - \zeta_\omega(\theta) \nabla_x Y_k \cdot \nabla_x \vphi   \big] =& \notag \\
		\int_0^T \int_{\B}  \rho \sigma_k \vphi + \int_{\B} \rho_{0,\delta} Y_{k,0, \delta} \vphi(0,\cdot)  \quad &\text{ for } \ k=1, ..., n  ,
\end{align} 
which is to be satisfied for any test function $\vphi \in \C^1([0,T)\times \B   ;\mathbb R)$, $\vphi\geq 0$,  together with  
%
	\begin{align}\label{eq-mass-specs-2-pena} 
		-\int_0^T \partial_t  \psi \int_{\B}  \rho G(\Yvect) + & \int_0^T \psi \int_{\B} \sum_{k=1}^n\zeta_\omega(\theta) G_0 |\nabla_x Y_k|^2 
		\notag \\
		& \qquad \qquad 
		\leq  
		\int_0^T \psi \int_{\B}  \sum_{k=1}^n \rho \frac{\partial G(\Yvect)}{\partial Y_k} \sigma_k + \int_{\B} \rho_{0,\delta} G(\Yvect_{0,\delta}) \psi(0) 
	\end{align}
for any $\psi\in \C^1_c([0,T))$ such that  $\psi\geq 0$, $\partial_t\psi\leq 0$ and any convex function $G\in \C^2(\mathbb R^n;\mathbb R)$ verifying
\begin{align}\label{convex-func}
	\sum_{k,l=1}^n \frac{\partial^2 G(\Yvect)}{\partial Y_k \partial Y_l} \xi_k \xi_l \geq G_0 |\xi|^2
\end{align}
for any 
$\xi:=(\xi_1,..., \xi_n) \in \mathbb R^n$ and  some $G_0>0$.

Moreover, 
it should satisfy that 
\begin{align}\label{boundedness-Y_k-pena}
	0\leq Y_k \leq 1 \ \,  \text{in  $(0,T)\times \B$  for each} \, \  k=1,...,n , \ \ \text{and }  \sum_{k=1}^n Y_k = 1 ,
\end{align}
and 
\begin{align}\label{mass-conservation-B}
\sum_{k=1}^n \sigma_k =0 .
\end{align} 
This is in accordance with the assumption \eqref{mass-conservation} and the fact that the species production rate $\sigma_k$ (for each $k=1,...,n$) outside $Q_T$ is taken to be zero. 

\subsection{Entropy production} We now write the 
 penalized entropy inequality as follows:
\begin{align} \label{penalized-entropy-1} 
	- & \int_0^T \int_{\B} \left(\rho s_\xi \left(\partial_t \vphi +  \uvect \cdot \nabla_x \vphi \right) - \frac{\kappa_\nu(\theta)}{\theta} \nabla_x \theta \cdot \nabla_x \vphi   - \sum_{k=1}^n s_k  \zeta_{\omega}(\theta) \nabla_x Y_k        \cdot \nabla_x \vphi \right)  \notag \\
	 & \qquad  - \int_{\B} (\rho s)_{0,\delta} \vphi(0,\cdot)   + \int_0^T \int_\B \lambda \theta^4 \vphi \notag \\
& 	\geq 
\int_0^T\int_{\B} \frac{\vphi}{\theta}  \left( \Ss_\omega : \nabla_x \uvect  + \frac{\kappa_\nu(\theta)|\nabla_x\theta|^2  }{\theta} - \sum_{k=1}^n \rho (h_k - \theta s_k)\sigma_k \right)  
	\end{align}
for any  test function $\vphi \in \C^1([0,T)\times \B; \mathbb R)$ with $\vphi \geq 0$.

\subsection{Energy balance} Let us write the  energy equation   for the penalized system, which is   
\begin{align}\label{energy-pena-1}
&	-	\int_0^T \partial_t \psi \int_{\B}  \left(\frac{1}{2} \rho |\uvect|^2  + \rho e_M(\rho,\theta)  + \rho e_{R,\xi}(\rho, \theta)  + \frac{\delta}{\beta-1} \rho^\beta \right) \notag \\
	&	 + \int_0^T \int_\B \lambda \theta^5 \psi	
		+
		 \frac{1}{\veps} \int_0^T \int_{\Gamma_t} \left|(\uvect -\V) \cdot \mathbf n\right|^2 \psi  \notag \\
	 =&	-\int_0^T  \psi \int_{\B}  \big(\rho[\uvect \otimes \uvect]    : \nabla_x \V  - \Ss_\omega : \nabla_x \V  + p_{\xi, \delta}(\rho,\theta) \Div_x \V  \big) 
		+ \int_0^T \int_{\B} \partial_t(\rho \uvect) \cdot \V  \psi  
		\notag \\ 
 &  -\int_0^T \psi \int_{\B} \sum_{k=1}^n \rho h_k \sigma_k +\psi(0) \int_{\B} 	  \left(\frac{1}{2} \frac{|(\rho \uvect)_{0,\delta}|^2}{\rho_{0,\delta}}  + \rho_{0,\delta} \left(e_M + e_{R,\xi}\right)(\rho_{0,\delta},\theta_{0,\delta})  + \frac{\delta}{\beta-1} \rho_{0,\delta}^\beta \right)
\end{align}
for any $\psi \in \C^1_c([0,T))$ such that $\psi\geq 0$ and $\partial_t \psi\leq 0$.

%
%

\begin{definition}
\label{weak-solution-penalization}
We say that the quantity
$(\rho,\uvect,\theta, \Yvect)$ is a weak solution to the penalized problem with the modified constitutive relations in Section \ref{Section-hypo-molified}, initial data \eqref{pena-rho-initial}--\eqref{pena-Y-initial} and boundary data \eqref{boundary-extended-1}--\eqref{boundary-extended-2} if the following items hold:
\begin{itemize}
			\item 
			$\rho\in L^\infty(0,T;L^{\frac{5}{3}}(\mathbb{R}^3))$,
		 $\rho\geq 0$, $\rho\in L^q(Q_T)$ with certain $q>1$,
		
		\item $\uvect, \, \nabla_x\uvect \in L^2((0,T)\times \B)$,
		$\rho \uvect \in L^\infty(0,T; L^1(\mathbb R^3))$,
		
		\item $\theta>0$  a.e. on $(0,T)\times \B$, $\theta \in L^\infty(0,T;L^4(\mathbb{R}^3))$,
		  $\theta, \nabla_x\theta, \log\theta, \nabla_x\log\theta \in L^2((0,T)\times \B)$,

  \item   $\rho s, \, \rho s\uvect  \in L^1( (0,T)\times \B )$,

  \item $Y_k \in L^\infty( (0,T)\times \B ), \, Y_k \in L^2( 0,T; W^{1,2}(\B))$
   for each $1\leq k \leq n$, and

  \item the relations \eqref{weak-conti} to \eqref{energy-pena-1} hold. 
  
\end{itemize}
    
\end{definition}

\begin{theorem}\label{weak-solution-fixed-domain}
Assume that $\V\in \C^1([0,T]; \C_c^3(\mathbb{R}^3;\mathbb{R}^3 ))$   the hypotheses in Section \ref{Section-hypo-molified} are satisfied.  
Moreover, the initial data satisfy \eqref{pena-rho-initial}--\eqref{pena-Y-initial} and boundary coniditions are as considered in \eqref{boundary-extended-1}--\eqref{boundary-extended-2}. 
Then there exists a weak solution to the penalized problem on any time interval $(0,T)$ in the sense of Definition \ref{weak-solution-penalization}.  
\end{theorem}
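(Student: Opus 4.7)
The plan is to construct weak solutions on the fixed reference domain $\B$ via a multi-level approximation scheme, following the strategy of Feireisl--Novotn\'y for the Navier--Stokes--Fourier system (\cite{Feireisl-Novotny-book}) and its multicomponent extension in \cite{Eduard-CPAA-2008}. Since the parameters $\veps,\omega,\nu,\lambda,\xi,\delta>0$ are fixed throughout, the situation is essentially that of a coupled nonlinear parabolic-hyperbolic system on a bounded domain with no-slip velocity on $\partial\B$, augmented with the linear boundary penalization $\frac{1}{\veps}\int_{\Gamma_t}(\uvect-\V)\cdot\mathbf n\,\boldvphi\cdot\mathbf n$. This boundary term, being linear in $\uvect$, produces a positive $L^2(\Gamma_t)$ contribution in the energy balance \eqref{energy-pena-1}, so it is fully compatible with the standard energy/entropy machinery.

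First I would add two further regularization layers: replace the continuity equation by $\partial_t\rho+\Div_x(\rho\uvect)=\varepsilon'\Delta\rho$ on $\B$ with homogeneous Neumann boundary data (compensated in the momentum balance by a Brinkman-type correction $\varepsilon'\nabla_x\rho\cdot\nabla_x\uvect$), and project the velocity equation onto a finite-dimensional Galerkin subspace $X_N\subset W^{1,2}_0(\B;\R^3)$. For given $\uvect\in X_N$, the regularized continuity equation yields a unique $\rho[\uvect]>0$ by standard parabolic theory; the species system \eqref{eq-mass-species-1-pena}, being linear parabolic with bounded source $\rho\sigma_k$ and satisfying \eqref{coeff-sigma-k}--\eqref{cond-3}, produces unique $Y_k[\rho,\uvect,\theta]$ with $0\le Y_k\le 1$ and $\sum_k Y_k=1$; the internal-energy equation, recovered from \eqref{penalized-entropy-1} and \eqref{energy-pena-1}, yields $\theta[\rho,\uvect,\Yvect]>0$ via a maximum-principle argument exploiting the strict positivity of $\kappa_\nu$ in all of $\B$ (guaranteed by $\nu>0$). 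The finite-dimensional momentum ODE for $\uvect_N$ is then closed by a Schauder fixed point and extended globally using the a priori energy estimate.

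The key limit passage is $N\to\infty$. The uniform bounds come from \eqref{energy-pena-1} (yielding $L^\infty_tL^1_x$ control of $\rho|\uvect|^2$, $\rho(e_M+e_{R,\xi})$, $\delta\rho^\beta$; $L^5_{t,x}$ control of $\theta$ thanks to the $\lambda\theta^5$ penalty; and $L^2$ control of $(\uvect-\V)\cdot\mathbf n$ on $\Gamma_t$) and from \eqref{penalized-entropy-1} (yielding $\nabla_x\theta,\nabla_x\log\theta\in L^2((0,T)\times\B)$ thanks to the radiative structure $\kappa_R\sim\theta^3$); together with \eqref{eq-mass-specs-2-pena} applied to $G(\Yvect)=|\Yvect|^2$, which provides $\nabla_xY_k\in L^2$. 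Strong convergence of $\rho$ is then obtained through the Feireisl--Lions effective viscous flux identity combined with the renormalized formulation \eqref{weak-conti} and the artificial viscosity $\varepsilon'$; strong convergence of $\Yvect$ follows from Aubin--Lions; and strong convergence of $\theta$ from a Div--Curl/Minty-type argument exploiting the monotonicity of $\rho s_\xi$ in $\theta$ guaranteed by \eqref{Gibbs-1} and \eqref{hypo-e_m}. The limit $\varepsilon'\to 0$ is then performed as in the standard two-level Feireisl--Novotn\'y scheme, the renormalized continuity equation surviving the passage.

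The step I expect to be the main obstacle is the strong convergence of the temperature sequence in the presence of the radiative component $a_\xi\theta^4/\rho$ and the penalty $\lambda\theta^4$, especially in regions where $\rho$ becomes small. The classical monotonicity argument based on \eqref{Gibbs-1} requires that $\rho\mapsto\rho s_\xi(\rho,\theta,\Yvect)$ inherit sufficient monotonicity in $\theta$ across weak limits, which can weaken as $\rho\to 0$; one has to exploit the structural lower bound \eqref{lower_bound_rho_e} together with the $L^5$ bound on $\theta$ so that the entropy production dominates the correct quantity and the passage to the limit closes. Aside from this delicate point, every other step is a direct adaptation of \cite{Eduard-CPAA-2008}: the boundary penalty, being linear and uniformly $L^2$-bounded in time via \eqref{energy-pena-1}, introduces no genuinely new difficulty, and the $\lambda\theta^4,\lambda\theta^5$ penalties are of favorable sign and merely add extra compactness for $\theta$ in the solid region.
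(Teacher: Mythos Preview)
Your proposal is correct and follows essentially the same route as the paper: a Feireisl--Novotn\'y multi-level scheme (artificial viscosity in the continuity equation, Faedo--Galerkin for the velocity, fixed-point coupling with the parabolic equations for $\theta$ and $Y_k$) adapted from \cite{Eduard-CPAA-2008}, with the boundary penalty absorbed as a benign linear perturbation. The paper's own proof is a brief sketch pointing to \cite{Eduard-CPAA-2008} and \cite{Sarka-et-al-ZAMP}; your outline is in fact more detailed. The one point you underemphasize, which the paper singles out explicitly, is that the coefficients $\kappa_\nu$ and $a_\xi$ are discontinuous across $\Gamma_t$ (they jump from $\kappa$, $a$ inside $\Omega_t$ to $\nu\kappa$, $\xi a$ outside), so the internal-energy/entropy equation has piecewise-constant diffusion and source coefficients; this requires a mild smoothing at the approximation level (cf.\ \cite[Theorem~3.1]{Sarka-et-al-ZAMP}) before the parabolic maximum principle can be invoked, but introduces no structural obstruction.
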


\begin{proof}
 The existence of weak solution of the concerned penalized system 
in the fixed domain is similar to \cite{Eduard-CPAA-2008}. We just give a short sketch  of the proof. It is necessary to  regularize the continuity equation with a viscous term $\Delta_x \rho$ and add appropriate terms in the momentum,  energy equations and the equations of species mass fractions as described in \cite[Section 5]{Eduard-CPAA-2008}. We need to  solve the momentum equations via Faedo-Galerkin approximations and then the energy equation and  equations for species mass fractions. 


However, as pointed out in \cite[Theorem 3.1]{Sarka-et-al-ZAMP},  we also face the following difficulties here. 
\begin{itemize}
\item The penalized terms  $\frac{1}{\veps} \int_0^T \int_{\Gamma_t}  (\uvect -\V) \cdot \mathbf n \ \boldvphi \cdot \mathbf n$ 
in \eqref{weak-momen} and $\frac{1}{\veps} \int_0^T \psi \int_{\Gamma_t} |(\uvect - \V)\cdot \mathbf n|^2$   in   \eqref{energy-pena-1}. 
	
\item The jumps in functions $\kappa_\nu (\theta, t, x)$ in \eqref{defi-kappa-nu} and $a_\xi( t, x)$ in \eqref{a-xi}. 
\end{itemize}

The strategy to overcome these difficulties  has already been discussed at the beginning of the proof of Theorem 3.1 in \cite{Sarka-et-al-ZAMP}. In the present work, we employ a
  similar methodology for the proof.  We emphasize that the term $\lambda\theta^5$  is necessary in the  internal energy equation 
  \eqref{energy-pena-1} to provide uniform bounds of high power of the temperature in $(0,T)\times \B$. 

  A detailed study of the existence of such  approximated solutions  can be
  found in \cite{KMNPW-L} in the context of compressible fluids.  
\end{proof}

The rest of the paper is devoted to prove the main result of this work, that is, Theorem \ref{Main-theorem}.

\section{Modified energy inequality and uniform bounds} 

\subsection{Finding a modified energy inequality}

Let us consider
$\psi_\varsigma \in \C^1_c([0,T))$ such that 
\begin{align}\label{function-psi-zeta}
	\psi_\varsigma(t) = 
	\begin{dcases} 1 \quad &\text{for } t < \tau - \varsigma , \\
		0   \quad & \text{for } t \geq \tau ,     
		\end{dcases}  \ \ \text{ for  given } \tau \in (0,T) \text{ and } \ 0< \varsigma < \tau   .
	\end{align}
We now use the test function $\vphi(t,x) = 1 \cdot \psi_\varsigma(t)$ in the entropy inequality \eqref{penalized-entropy-1} so that we have (after passing to the limit $\varsigma \to 0$)
 \begin{align}\label{entropy-particular-aux}
 	-	\int_\B \rho s_\xi
  (\tau, \cdot) 
 	+ \int_0^\tau \int_{\B} \frac{1}{\theta}  \left( \Ss_\omega : \nabla_x \uvect  + \frac{\kappa_\nu(\theta)|\nabla_x\theta|^2  }{\theta} - \sum_{k=1}^n \rho (h_k - \theta s_k) \sigma_k \right)  - \int_0^\tau \int_\B \lambda \theta^4 \notag \\
 	 \leq
 	- \int_{\B} \rho_{0,\delta} s_{\xi}(\rho_{0,\delta}, \theta_{0,\delta}, \Yvect_{0,\delta} )  
 \end{align} 
for a.a.  $\tau \in [0,T]$.

\vspace*{.1cm}

Upon using  the same test function  
 $\vphi(t,x) = 1 \cdot \psi_\varsigma(t)$  in the formulation \eqref{eq-mass-species-1-pena}, we have (after passing to the limit $\varsigma \to 0$), 
\begin{equation*}
\begin{aligned}
\int_{\B} \rho Y_k(\tau, \cdot) = \int_0^\tau \int_\B \rho \sigma_k + \int_\B \rho_{0,\delta} Y_{k,0,\delta} 
\end{aligned}
\end{equation*} 
for a.a.  $\tau \in [0,T]$. In particular, 
\begin{align}\label{eq-mass-species-aux}
\int_{\B}\sum_{k=1}^n \rho s_k  Y_k(\tau, \cdot) = \int_0^\tau \int_\B \sum_{k=1}^n \rho s_k \sigma_k + \int_\B \sum_{k=1}^n \rho_{0,\delta} s_k  Y_{k,0,\delta} 
\end{align} 
for a.a. $\tau \in [0,T]$, since $s_k$ for $k=1, ..., n$ are constants.

Now, recall that 
\begin{align}\label{exp-new-s}
s_\xi(\rho, \theta, \Yvect) = s_M(\rho, \theta)  + s_{R,\xi}(\rho, \theta)    + \sum_{k=1}^n s_k Y_k  \quad \text{(with   $s_{R,\xi}(\rho, \theta) = \frac{4a_\xi \theta^3}{3\rho}$)} .
\end{align}

Using this in \eqref{entropy-particular-aux} and thanks to  \eqref{eq-mass-species-aux}, we get 
\begin{align}\label{entropy-particular}
-\int_\B \left[\rho (s_M+s_{R,\xi})\right](\tau, \cdot)  + \int_0^\tau \int_{\B} \frac{1}{\theta}  \left( \Ss_\omega : \nabla_x \uvect  + \frac{\kappa_\nu(\theta)|\nabla_x\theta|^2  }{\theta} - \sum_{k=1}^n \rho h_k \sigma_k \right)  - \int_0^\tau \int_\B \lambda \theta^4  \notag \\
 	 \leq
 	- \int_{\B} \rho_{0,\delta} (s_M+s_{R,\xi})(\rho_{0,\delta}, \theta_{0,\delta})   
\end{align}
for a.a.  $\tau \in [0,T]$.

In the next step, we use $\psi=\psi_\varsigma$ as a test function in \eqref{energy-pena-1} and  we derive after passing to the limit $\varsigma \to 0$, that
\begin{align}\label{energy-pena-2}
		& \int_{\B}  \left(\frac{1}{2} \rho |\uvect|^2  + \rho e_M(\rho,\theta)  + \rho e_{R,\xi}(\rho, \theta)  + \frac{\delta}{\beta-1} \rho^\beta \right)(\tau, \cdot)  \notag \\
		&	\quad + \int_0^\tau \int_\B \lambda \theta^5
		+
		\frac{1}{\veps} \int_0^\tau \int_{\Gamma_t} \left|(\uvect -\V) \cdot \mathbf n\right|^2  \notag  \\
		=&	-\int_0^\tau  \int_{\B}  \big(\rho[\uvect \otimes \uvect]    : \nabla_x \V  - \Ss_\omega : \nabla_x \V  + p_{\xi, \delta}(\rho,\theta) \Div_x \V  + \rho \uvect \cdot \partial_t \V   \big) \notag \\
		& + \int_\B  (\rho \uvect \cdot \V)(\tau, \cdot)  -\int_\B   (\rho \uvect)_{0,\delta} \cdot \V(0, \cdot)  - \int_0^\tau \int_{\B} \sum_{k=1}^n \rho h_k \sigma_k 
		\notag \\ 
		& + \int_{\B} 	  \left(\frac{1}{2} \frac{|(\rho \uvect)_{0,\delta}|^2}{\rho_{0,\delta}}  + \rho_{0,\delta} (e_M+e_{R, \xi})(\rho_{0,\delta},\theta_{0,\delta})  + \frac{\delta}{\beta-1} \rho_{0,\delta}^\beta    \right)
	\end{align}
for a.a. $\tau \in [0,T].$

Now, by summing up  \eqref{entropy-particular} and \eqref{energy-pena-2}, we deduce that 
 \begin{align}\label{energy-pena-3}
 		& \int_{\B}  \left(\frac{1}{2} \rho |\uvect|^2  + \rho( e_M +  e_{R,\xi}) - \rho (s_M +s_{R, \xi})  + \frac{\delta \rho^\beta}{\beta-1}\right)(\tau, \cdot) 
+  \int_0^\tau \int_\B \lambda \theta^5 \notag \\
 	&	+
 		\frac{1}{\veps} \int_0^\tau \int_{\Gamma_t} \left|(\uvect -\V) \cdot \mathbf n\right|^2  + \int_0^\tau \int_\B \frac{1}{\theta}    \left( \Ss_\omega : \nabla_x \uvect  + \frac{\kappa_\nu(\theta)|\nabla_x\theta|^2  }{\theta} - \sum_{k=1}^n \rho h_k \sigma_k \right) \notag   \\
 	\leq &	-\int_0^\tau  \int_{\B}  \big(\rho[\uvect \otimes \uvect]    : \nabla_x \V  - \Ss_\omega : \nabla_x \V  + p_{\xi, \delta}(\rho,\theta)\, \Div_x \V  + \rho \uvect \cdot \partial_t \V   \big) \notag \\
 		& + \int_\B  (\rho \uvect \cdot \V)(\tau, \cdot) - \int_\B (\rho \uvect)_{0,\delta} \cdot \V(0, \cdot)    - \int_0^\tau \int_{\B} \sum_{k=1}^n \rho h_k \sigma_k  + \int_0^\tau \int_\B\lambda \theta^4     \notag \\ 
 		& + \int_{\B} 	  \left(\frac{1}{2} \frac{|(\rho \uvect)_{0,\delta}|^2}{\rho_{0,\delta}}  + \rho_{0,\delta} (e_M+ e_{R, \xi})(\rho_{0,\delta},\theta_{0,\delta}) - \rho_{0,\delta} (s_M+ s_{R, \xi})(\rho_{0,\delta},\theta_{0,\delta})   + \frac{\delta \rho_{0,\delta}^\beta}{\beta-1}     \right)
 	\end{align}
 for a.a. $\tau \in [0,T].$ 
 
\vspace*{.1cm}

Inspired by \cite[Chapter 2.2.3]{Feireisl-Novotny-book},  we hereby   define the Helmholtz function
\begin{align}\label{H-B}
	\mathcal H_{1, \xi}(\rho, \theta) : =
		\rho \left(e_M(\rho, \theta) + e_{R, \xi}(\rho, \theta) \right)  - \rho \left(s_{M}(\rho, \theta) + s_{R, \xi}(\rho, \theta) \right),
	\end{align}
	and in what follows, we denote  
	\begin{align}\label{H-B-0}
		\mathcal	H_{1, \xi}(\rho_{0,\delta}, \theta_{0,\delta}) &= 	\rho_{0,\delta} \left(e_M(\rho_{0,\delta}, \theta_{0,\delta}) + e_{R, \xi}(\rho_{0,\delta}, \theta_{0,\delta}) \right) \notag \\
		& \quad   - \rho_{0,\delta} \left(s_{M}(\rho_{0,\delta}, \theta_{0,\delta}) + s_{R, \xi}(\rho_{0,\delta}, \theta_{0,\delta})\right) . 
	\end{align}

\subsection{Finding uniform bounds}

Let us now find suitable bounds for  the right-hand sides of the modified energy inequality \eqref{energy-pena-3}. 
First, we recall that the fluid system satisfies the {\em total mass conservation law}, that is, 
\begin{align*}
	\int_{\B} \rho(\tau, \cdot ) = \int_\B \rho_{0,\delta} (\cdot) = \int_{\Omega_0} \rho_0(\cdot) =  C(\rho_0)>0 .
\end{align*}

\vspace*{.1cm}
\noindent 
{$\bullet$ \bf Step 1.}  
(i)  For any $\epsilon>0$ small, we have 
\begin{align}\label{esti-1}
	\int_\B  (\rho \uvect \cdot \V)(\tau,\cdot)  \leq C(\V) 	\left| \int_\B  \sqrt{\rho} \sqrt{\rho} \uvect(\tau,\cdot) \right| \leq C(\V, \rho_0)  + \epsilon \int_\B \rho |\uvect|^2 .
\end{align} 

\vspace*{.2cm}
\noindent 
(ii) 
Without loss of generality, we assume $0<\lambda\leq 1$ from now onward.  Then, by using H\"older's and Cauchy-Schwarz inequalities, we obtain 
\begin{align}\label{esti-2} 
	\int_0^\tau \int_\B \Ss_\omega : \nabla_x \V 
	&\leq   \frac{1}{2} \int_0^\tau \int_\B  \frac{1}{\theta} \Ss_\omega : \nabla_x \uvect + C(\V) \int_0^\tau \int_\B \theta \notag  \\
	& \leq \frac{1}{2} \int_0^\tau \int_\B  \frac{1}{\theta} \Ss_\omega : \nabla_x \uvect + 
	\epsilon \int_0^\tau \int_\B \lambda \theta^5 +  \frac{C(\V, \epsilon)}{\lambda^{\frac{1}{4}}}. 
\end{align}
We also have that 
\begin{align}\label{esti-3}
	\left| \int_0^\tau \int_\B\rho[\uvect \otimes \uvect]  : \nabla_x \V  \right| \leq  C(\V)\int_0^\tau \int_\B \rho |\uvect|^2   \ \text{ and}
\end{align}
\begin{align}\label{esti-4}
	\left| \int_0^\tau \int_\B\rho \uvect \cdot \partial_t \V  \right| \leq  C(\V, \rho_0) + C \int_0^\tau \int_\B \rho |\uvect|^2  .
\end{align}

\vspace*{.2cm}
\noindent 
(iii) 
Next, since $0<\lambda\leq 1$,  it is easy to observe that
\begin{align}\label{esti-5}
	\int_0^\tau \int_\B \lambda \theta^4  \leq C(\epsilon) + \epsilon \int_0^\tau \int_\B \lambda \theta^5 .
\end{align}

\vspace*{.2cm}
\noindent 
(iv) 
In order to deal with  pressure term  $p_{\xi, \delta}(\rho, \theta)$, let us notice that 
\begin{align}\label{fact-1}
	P^\prime(Z)>0,  \quad \forall Z>0.
\end{align}
Indeed, \eqref{hypo-p_m} gives that $P^\prime(Z)>0$ if  $0<Z<\underline Z$, or, $Z>\overline Z$. This together with \eqref{hypo_P} gives \eqref{fact-1} provided  we extend $P$ as a strictly increasing function in $[\underline Z, \overline Z]$. 
Next, we  recall the fact \eqref{fact-2-1} which tells that
$\disp \lim_{Z\to \infty} \frac{P(Z)}{Z^{\frac{5}{3}}} = p_\infty > 0$. Using this, the fact \eqref{fact-1} and the information \eqref{hypo_P}--\eqref{hypo-p_m_with_P-2}, we obtain the following bounds on the molecular pressure $p_M$, 
\begin{equation}\label{mole_bound_p_M}
	\begin{aligned} 
		\underline c \rho \theta \leq & p_M \leq \overline c \rho \theta      \quad  \text{if } \rho < \overline Z \theta^{\frac{3}{2}}, \\
			\underline c \rho^{\frac{5}{3}} \leq & p_M \leq 
			\begin{cases}
				\overline c \theta^{\frac{5}{2}}  \quad  \text{if } \rho < \overline Z \theta^{\frac{3}{2}} , \\
				\overline c	\rho^{\frac{5}{3}} \quad  \text{if } \rho > \overline Z \theta^{\frac{3}{2}} ,
			\end{cases}
	\end{aligned}
\end{equation}
and $p_M$ is monotone in $\underline Z \theta^{\frac{3}{2}} \leq \rho \leq \overline Z \theta^{\frac{3}{2}}$.  

With the above information, we deduce that 
\begin{align} \label{estimate-pressue-term}
\left|\int_0^\tau \int_\B p_{\xi, \delta} (\rho, \theta) \Div_x \V \right| 
 \leq C(\V) \int_0^\tau \int_\B \frac{\delta}{\beta-1} \rho^\beta +  C(\V) \int_0^\tau \int_\B a_{\xi}  \theta^4  \notag \\ 
 + C(\V) \int_0^\tau \int_\B \rho^{\frac{5}{3}} 
 + \epsilon \int_0^\tau \int_\B \lambda \theta^5 + \frac{C(\V, \epsilon)}{\lambda} .
\end{align}

Let us observe that 
\begin{align}\label{lower_bound_rho_e_xi}
\rho (e_M(\rho, \theta) + e_{R,\xi}(\rho ,\theta)  ) \geq a_\xi \theta^4 + \frac{3}{2} p_\infty \rho^{\frac{5}{3}}	,
\end{align}
which can be shown in the same way as we have obtained 
\eqref{lower_bound_rho_e}, and 
 therefore, 
\begin{align*}
	\int_0^\tau \int_\B \left(a_\xi \theta^4 + \rho^{\frac{5}{3}} \right)  \leq C(p_\infty) \int_0^\tau \int_\B \rho (e_M + e_{R,\xi}) .
\end{align*}

\vspace*{.1cm}

Using the above fact in \eqref{estimate-pressue-term},  together with all other estimates above, we have from \eqref{energy-pena-3}  (by fixing $\epsilon>0$ small enough) the following:
\begin{align}\label{energy-pena-4}
& \int_{\B}  \left(\frac{1}{2} \rho |\uvect|^2  + \mathcal H_{1, \xi}(\rho, \theta)  + \frac{\delta \rho^\beta}{\beta-1} \right)(\tau, \cdot) 
+  \int_0^\tau \int_\B \lambda \theta^5  \notag \\
&	+
\frac{1}{\veps} \int_0^\tau \int_{\Gamma_t} \left|(\uvect -\V) \cdot \mathbf n\right|^2  + \int_0^\tau \int_\B \frac{1}{\theta}    \left( \Ss_\omega : \nabla_x \uvect  + \frac{\kappa_\nu(\theta)|\nabla_x\theta|^2  }{\theta} - \sum_{k=1}^n \rho h_k \sigma_k \right)
\notag \\
\leq  &
\int_{\B} \left(\frac{1}{2}  \frac{|(\rho\uvect)_{0,\delta}|^2}{\rho_{0,\delta}} + \mathcal  	H_{1, \xi}(\rho_{0,\delta}, \theta_{0,\delta})  + \frac{\delta}{\beta-1} \rho^\beta_{0,\delta}  - (\rho \uvect)_{0,\delta} \V(0,\cdot) \right) 
	\notag 		\\
	&	+ C\int_0^\tau \int_\B \left( \frac{1}{2}\rho |\uvect|^2   +  \rho (e_M + e_{R,\xi}) 
	 +   \frac{\delta}{\beta-1} \rho^\beta \right) \notag \\
&	 + \int_{0}^\tau \int_\B \sum_{k=1}^n \left|\rho h_k \sigma_k \right| + C\left(1 + \frac{1}{\lambda}\right)   
		\end{align}
for a.a.  $\tau \in [0,T]$, where $\mathcal H_{1, \xi}(\rho, \theta)$ and $\mathcal H_{1, \xi}(\rho_{0,\delta}, \theta_{0,\delta})$ are defined by \eqref{H-B} and \eqref{H-B-0} respectively, and the constant $C>0$  above  depends on the quantities  $\V, \rho_0$, $p_\infty$ but not on the parameters $\lambda$, $\omega$, $\xi$, $\nu$, $\veps$ or $\delta$.   
In the above, we could replace the quantity $\disp C\left(1+\frac{1}{\lambda}\right)$ by $\disp \frac{C}{\lambda}$ 
since we already assumed that $0<\lambda \leq 1$.

\vspace*{.15cm} 
\noindent 
$\bullet$ {\bf Step 2.} 
(i) It is straightforward to  deduce that 
\begin{align}\label{rho-h_k-sigma_k} 
	\int_0^\tau \int_\B \sum_{k=1}^n \left|\rho h_k \sigma_k \right| \leq C 
\end{align}
for some constant $C>0$ that depends on $\rho_0$, $h_k$ and $\overline \sigma$, 
thanks to the properties of species production rate $\sigma_k$ given by \eqref{coeff-sigma-k} and the fact that $h_k$ are constants for $k=1,...,n$. 

\vspace*{.2cm}
\noindent
(ii) Next, on the left-hand side of \eqref{energy-pena-3}, we have 
\begin{align}\label{rho-h-sig-thet} 
- \int_0^\tau \int_\B \sum_{k=1}^n \frac{1}{\theta} \rho h_k \sigma_k & = - \int_0^\tau \int_\B \sum_{k=1}^n \frac{1}{\theta} \rho  \sigma_k (g_k + \theta s_k)  \quad (\text{using } \eqref{retaion-g_k}) \notag \\
	& =  - \int_0^\tau \int_\B \sum_{k=1}^n \frac{1}{\theta} \rho  g_k \sigma_k       - \int_0^\tau \int_\B \sum_{k=1}^n \rho \sigma_k s_k  \notag \\
	& \geq -  \sum_{k=1}^n \overline \sigma s_k \int_0^\tau \int_\B \rho   \quad (\text{using } 
	\eqref{sum-sigma-k}) .
\end{align}

\vspace*{.1cm}

Using  \eqref{rho-h_k-sigma_k} and   \eqref{rho-h-sig-thet}, 
the inequality  \eqref{energy-pena-4} follows to
\begin{align}\label{energy-pena-4-2}
	& \int_{\B}  \bigg(\frac{1}{2} \rho |\uvect|^2  + \mathcal H_{1, \xi}(\rho, \theta)  + \frac{\delta \rho^\beta}{\beta-1}\bigg)(\tau, \cdot) 
+  \int_0^\tau \int_\B \lambda \theta^5 +
\frac{1}{\veps} \int_0^\tau \int_{\Gamma_t} \left|(\uvect -\V) \cdot \mathbf n\right|^2  \notag \\
& \ + \int_0^\tau \int_\B  \frac{\kappa_\nu(\theta)|\nabla_x\theta|^2  }{\theta^2} + \int_0^\tau \int_{\B} \frac{1}{\theta} \Ss_\omega : \nabla_x \uvect  
			\notag \\
	\leq  &
	\int_{\B} \left(\frac{1}{2}  \frac{|(\rho\uvect)_{0,\delta}|^2}{\rho_{0,\delta}} + \mathcal  	H_{1, \xi}(\rho_{0,\delta}, \theta_{0,\delta})  + \frac{\delta}{\beta-1} \rho^\beta_{0,\delta}  - (\rho \uvect)_{0,\delta} \V(0,\cdot) \right) 
			\notag \\
	&	+ C\int_0^\tau \int_\B \left( \frac{1}{2}\rho |\uvect|^2   +  \rho (e_M + e_{R,\xi}) 
	 +   \frac{\delta \rho^\beta }{\beta-1} \right) + \frac{C}{\lambda} 
		\end{align}
for a.a. $\tau \in [0,T]$, where the constant $C>0$ is independent of the parameters  $\lambda$, $\omega$, $\xi$, $\nu$, $\veps$ and $\delta$.

\vspace*{.1cm}

Then, by means of    
 Gr\"{o}nwall's inequality we  get
\begin{align} \label{energy-pena-5}
	& \int_{\B}  \left(\frac{1}{2} \rho |\uvect|^2  + \mathcal H_{1, \xi}(\rho, \theta)  + \frac{\delta \rho^\beta}{\beta-1}\right)(\tau, \cdot) 
+  \int_0^\tau \int_\B \lambda \theta^5 +
\frac{1}{\veps} \int_0^\tau \int_{\Gamma_t} \left|(\uvect -\V) \cdot \mathbf n\right|^2  \notag \\
& \ + \int_0^\tau \int_\B  \frac{\kappa_\nu(\theta)|\nabla_x\theta|^2  }{\theta^2} + \int_0^\tau \int_{\B} \frac{1}{\theta} \Ss_\omega : \nabla_x \uvect 
\notag  \\
	\leq  &
	\int_{\B} \left(\frac{1}{2}  \frac{|(\rho\uvect)_{0,\delta}|^2}{\rho_{0,\delta}} + \mathcal  	H_{1, \xi}(\rho_{0,\delta}, \theta_{0,\delta})  + \frac{\delta}{\beta-1} \rho^\beta_{0,\delta}  - (\rho \uvect)_{0,\delta} \V(0,\cdot) \right) +\frac{C}{\lambda} 
 \end{align}
for a.a. $\tau \in [0,T]$. 

\vspace*{.1cm}

To ensure that the left-hand side is positive, we shall do the following. Setting a constant $\overline \rho$  such that $\disp \int_\B (\rho-\overline \rho)=0$  for almost all $\tau \in [0,T]$ and, we rewrite the estimate \eqref{energy-pena-5} as follows
 \begin{align}\label{energy-pena-6}
 		&	\int_{\B} \left(\frac{1}{2} \rho |\uvect |^2 +	\mathcal H_{1, \xi}(\rho, \theta) - 
 		(\rho-\overline \rho) \frac{\partial\mathcal  H_{1, \xi}(\overline \rho, 1) }{\partial \rho} - \mathcal H_{1, \xi}(\overline \rho, 1) + \frac{\delta \rho^\beta}{\beta-1} \right)(\tau, \cdot)   \notag \\
 		&	+ \frac{1}{\veps} \int_0^\tau \int_{\Gamma_t} \left|(\uvect -\V) \cdot \mathbf n \right|^2 
 		+	\int_0^\tau \int_\B \lambda \theta^5	\notag \\  
 		& + \int_0^\tau \int_{\B}   \frac{\kappa_\nu(\theta) |\nabla_x \theta|^2}{\theta^2}   + 
 		\int_0^\tau \int_{\B} \frac{1}{\theta} \Ss_\omega : \nabla_x \uvect 
 	\notag 	\\
 		\leq 
 		&
 		C	\int_{\B} \left(\frac{1}{2}  \frac{|(\rho\uvect)_{0,\delta}|^2}{\rho_{0,\delta}}  +  \mathcal H_{1, \xi}(\rho_{0,\delta}, \theta_{0,\delta}) + \frac{\delta}{\beta-1} \rho^\beta_{0,\delta}  - (\rho \uvect)_{0,\delta} \V(0,\cdot)  \right)  \notag \\
 	& \ - \int_\B \left((\rho_{0,\delta}-\overline \rho) \frac{\partial \mathcal H_{1, \xi}(\overline \rho, 1) }{\partial \rho} + \mathcal H_{1, \xi}(\overline \rho, 1)\right) 
  + \frac{C}{\lambda} 
 	\end{align}
for a.a. $\tau \in [0,T]$, where the constant $C>0$ does not depend on any of the parameters.

\vspace*{.2cm}
\noindent 
$\bullet$ {\bf The uniform bounds.} (i) From \eqref{energy-pena-6}, we directly have 
\begin{align}\label{uniform-bound-1}
	\int_0^T \int_{\Gamma_t} \left|(\uvect - \V)\cdot \mathbf n\right|^2 & \leq \frac{\veps \, C}{\lambda}  
,  \\
	  \esssup_{\tau \in [0,T]} \left\|\delta \rho^\beta(\tau, \cdot)\right\|_{L^1(\B)}    &\leq \frac{C}{\lambda} ,             \label{uniform-bound-2} \\	  
	    \esssup_{\tau \in [0,T]} \left\| \sqrt{\rho} \uvect(\tau, \cdot) \right\|^2_{L^2(\B; \mathbb R^3)}    &\leq \frac{C}{\lambda} ,            \label{uniform-bound-3} \\		 
	\text{and } \    \left\| \lambda \theta^5 \right\|_{L^1((0,T)\times \B)} &\leq \frac{C}{\lambda}  \label{uniform-bound-4}.
\end{align}

\vspace*{.2cm} 
\noindent
(ii) We further have 
\begin{align}\label{uniformbound-Stress}
\int_0^T \int_\B \frac{1}{\theta} \Ss_\omega : \nabla_x \uvect \leq  \frac{C}{\lambda}  .
\end{align} 

Let us
 recall the expression of $\Ss_\omega$ from \eqref{stress_tensor-omega}. With this in hand,  and by using   \eqref{def-mu-omega}, \eqref{def-nu-omega} and \eqref{hypo-mu},
we obtain 
\begin{align}\label{lower-bound-S-w-u}
	\int_0^\tau \int_\B \frac{1}{\theta} \Ss_\omega : \nabla_x \uvect \geq c_1 (\omega) 
	 \int_0^\tau \int_\B \left|\nabla_x \uvect + \nabla^\top_x \uvect -\frac{2}{3}\Div_x \uvect \mathbb I   \right|^2 
\end{align}
for some constant $c_1(\omega)>0$. 

On the other hand, by the Korn-Poincar\'e inequality (see Lemma 
\ref{Korn-Poincare}), we have 
\begin{equation*}
	\begin{aligned}
		\|\uvect\|^2_{W^{1,2}_0(\B; \mathbb R^3)} &\leq C \left\|\nabla_x \uvect + \nabla^\top_x \uvect -\frac{2}{3}\Div_x \uvect \mathbb I   \right\|^2_{L^2(\B; \mathbb R^3)} + C \left(\int_\B \rho |\uvect| \right)^2  \\
		& \leq  C \left\|\nabla_x \uvect + \nabla^\top_x \uvect -\frac{2}{3}\Div_x \uvect \mathbb I   \right\|^2_{L^2(\B; \mathbb R^3)} + C(\rho_0)\int_\B \rho |\uvect|^2 .
	\end{aligned} 
\end{equation*}
Therefore, 
\begin{align}\label{u_W_12}
	c_1(\omega) \int_0^\tau	\|\uvect\|^2_{W^{1,2}_0(\B; \mathbb R^3)}  \leq C  \int_0^\tau \int_\B \frac{1}{\theta} \Ss_\omega : \nabla_x \uvect + C(\rho_0) \int_0^\tau \int_\B \rho |\uvect|^2, 
\end{align}
and consequently, by \eqref{uniform-bound-3} and \eqref{uniformbound-Stress}, we have 
\begin{align}\label{uniform-bound-5}
c_1(\omega) \|\uvect\|^2_{L^2(0,T; W^{1,2}_0(\B; \mathbb R^3))} \leq   \frac{C}{\lambda}.  
\end{align} 
\begin{remark}\label{remark-c_1-c_2-omega}
	From the definitions of $\mu_\omega$ and $\eta_\omega$ in \eqref{def-mu-omega}--\eqref{def-nu-omega}, it is clear that the constant $c_1(\omega)$ 
	behaves like $c \, \omega$ in $((0,T)\times \B)\setminus Q_T$    for some constant $c>0$ that is independent of $\omega$. 
\end{remark}

\vspace*{.1cm}
\noindent
(iii) We further have (from \eqref{energy-pena-6}), 
\begin{align}\label{esti-temp-sub}
\int_0^\tau \int_\B \frac{\kappa_\nu(\theta) |\nabla_x \theta|^2}{\theta^2} \leq \frac{C}{\lambda}.
\end{align}
    Recall that $\kappa_\nu(\theta) = \chi_\nu \kappa(\theta)$ (see \eqref{defi-kappa-nu}) and $\kappa(\theta)=\kappa_M(\theta)+\kappa_R(\theta)$
     which are defined by 
     \eqref{hypo-kappa}; this yields 
\begin{align}\label{bound-theta-derivative}
	\int_0^T \int_\B \left( |\nabla_x \log(\theta)|^2 + |\nabla_x \theta^{\frac{3}{2}}|^2  \right) \leq \frac{C(\nu)}{\lambda}.   
\end{align}

\vspace*{.2cm}
\noindent
(iv)
Now, since $\mathcal H_{1, \xi}$ is coercive (see, for instance, \cite[Proposition 3.2]{Feireisl-Novotny-book}) and bounded from below, we get 
\begin{align}\label{bound-rho-e-xi}
\esssup_{\tau\in [0,T]} \|\rho(e_M+e_{R,\xi})(\tau, \cdot)\|_{L^1(\B)} \leq \frac{C}{\lambda} ,
\end{align}
and consequently we have 
\begin{align}\label{bound-theta-L4} 
	 \esssup_{\tau\in (0,T)} \|a_\xi \theta^4(\tau, \cdot)\|_{L^1(\B)} \leq \frac{C}{\lambda}, \\
	 \label{rho_5/3}  \esssup_{\tau\in (0,T)} \|\rho(\tau, \cdot)\|^{\frac{5}{3}}_{L^{\frac{5}{3}}(\B)}  \leq     \frac{C}{\lambda} .
\end{align}

\vspace*{.2cm}
\noindent
(v) 
Then by \eqref{bound-theta-derivative}, \eqref{bound-theta-L4} and the generalized  Poincar\'{e} inequality from \Cref{Poincare} (since the condition 
\eqref{measurable-condition} satisfies), we deduce that 
\begin{align}\label{bound-theta-gamma}
	&\|\theta^\gamma \|_{L^2(0,T; W^{1,2}(\B))} \leq C(\lambda, \nu) \quad \text{for any } 1 \leq \gamma \leq \frac{3}{2},  \text{ and} \\
\label{bound-log-theta}
&\|\nabla_x \log \theta \|_{L^2(0,T; L^2(\B;\mathbb R^3))} \leq  C(\lambda, \nu) .
\end{align}

\vspace*{.2cm} 
 \noindent 
 (vi)
By using  \eqref{esti-temp-sub} and 
\eqref{bound-theta-gamma},      we get
\begin{align}\label{bound-kappa-theta-2}
\int_0^T \int_\B	\frac{\kappa_\nu(\theta)}{\theta} |\nabla_x \theta| 
&
\leq \frac{1}{2}	\int_0^T \int_\B \frac{\kappa_\nu(\theta)}{\theta^2} |\nabla_x \theta|^2 + \frac{1}{2} \int_0^T \int_\B \kappa_\nu(\theta) \leq C(\lambda, \nu).
\end{align} 



%

\vspace*{.2cm}
\noindent 
(vii) By virtue of  \eqref{uniform-bound-3} and \eqref{rho_5/3}, we further deduce that 
\begin{align}\label{bound-rho-u-5/4}
\|\rho \uvect \|_{L^\infty(0,T; L^{\frac{5}{4}}(\Omega; \mathbb R^3) } \leq C(\lambda) .
\end{align} 
Moreover, the convective term satisfies
\begin{align*}
\bigg| \int_{\B} \rho [\uvect \otimes \uvect] \cdot  \boldphi \bigg|
\leq \|\rho \uvect\|_{L^{\frac{5}{4}}(\B; \R^3) } \|\uvect\|_{L^6(\B;\R^3)} \|\boldphi\|_{L^{30}(\B; \R^3)},
\end{align*} 
for any $\boldphi \in L^2(0,T; L^{30}(\B; \R^3))$,
and consequently, 
\begin{align}\label{bound-convective-term} 
\|\rho [\uvect \otimes \uvect] \|_{L^2(0,T;L^{\frac{30}{29}}(\B; \R^3))     } \leq C(\lambda, \omega) ,
\end{align} 
thanks to \eqref{uniform-bound-5} and \eqref{bound-rho-u-5/4}.

\vspace*{.2cm}
\noindent 
(viii) {\bf Pressure estimate.}
Using the technique based on the {\em Bogovskii operator} (see, for instance  \cite{Feireisl-hana})
one can get more information about the modified pressure 
$p_{\xi, \delta}(\rho, \theta)$, namely,  
\begin{align}\label{Bogovskii}
\iint_K   p_{\xi,\delta} (p,\theta) \rho \leq C(\lambda, \xi, \nu, \delta)
\end{align}
for any compact set $K\subset (0,T)\times \B$ 
 such that 
\begin{align}\label{Set-K}
K \cap \left(\cup_{\tau\in [0,T]} \big(\{\tau\} \times \Gamma_\tau\big)  \right) = \emptyset .
\end{align}

\begin{itemize} 
\item[--] To this end, we consider the auxiliary problem: given 
\begin{align}\label{B-1} 
	g \in  L^q(\B) , \ \  \int_{\B} g \dx = 0 ,
\end{align}
find a vector field $\mathbf v= \LL_b [g]$ such that 
\begin{align}\label{B-2}
	\mathbf v \in W^{1,q}_0(\B; \mathbb R^3) , \ \  \Div_x \mathbf v = g \ \ \text{in } \B.
\end{align}

\item[--] The problem \eqref{B-1}--\eqref{B-2} admits many solutions, but we use the construction due to Bogovskii \cite{Bogovskii}. In particular, we write the following results from \cite[Galdi, Chapter III.3]{Galdi}.

\item[--] The operator $\LL_b$ is bounded, linear and enjoys the bound 
\begin{align}\label{Bogovskii-esti-1}
\|\LL_b [g] \|_{W^{1,q}_0(\B; \R^3)} \leq c(q) \|g\|_{L^q(\B)} \ \text{ for any } 1<q<\infty.
\end{align} 
If, moreover, $g$ can be written in the form $g=\Div_x \mathbf G$ for a certain $\mathbf G\in L^r(\B; \mathbb R^3)$, $\mathbf G \cdot \mathbf n |_{\partial \B}= 0$, then 
\begin{align}\label{Bogovskii-esti-2}
\|\LL_b [g] \|_{L^{r}(\B; \mathbb R^3)} \leq c(r) \|\mathbf G\|_{L^{r}(\B;\mathbb R^3)} \ \text{ for any } 1<r<\infty   .
\end{align}
\end{itemize}

Now, let us consider the following function 
\begin{align}\label{tst-func-Bog}
\boldvphi(t,x) = \psi_K(t,x)  \LL_b [\rho-c(\rho_0)]  \ \ \ \forall (t,x) \in (0,T) \times \B
\end{align} 
with    $c(\rho_0)= \frac{1}{|\B|} \int_{\B} \rho \dx$,  and
\begin{align*}
 \psi_K \in \C_c^\infty((0,T)\times  \B)) \text{ such that } 0\leq \psi_K \leq  1 \text{ in } K \text{ and } \psi_K \equiv 0 \text{ in } ((0,T)\times \B) \setminus K ,  
\end{align*} 
where $K\subset (0,T)\times \B$ is any compact set fulfilling the property \eqref{Set-K}. 

We use this $\boldvphi$ as a test function in the momentum equation \eqref{weak-momen}. In what follows, we get (after some  steps of computations)   
In what follows, we have 
\begin{align}\label{modi-pres-frm-momen}
&\iint_K   p_{\xi, \delta}(\rho, \theta)\rho \, \psi_K \notag  \\
&= c(\rho_0) \iint_K  p_{\xi, \delta}(\rho, \theta)  \psi_K  - \iint_K p_{\xi, \delta}(\rho, \theta) \nabla_x \psi_K \cdot \LL_b [\rho-c(\rho_0)]
\notag \\
& \ \  - \iint_K \partial_t \psi_K  \rho \uvect    \LL_b [ \rho - c(\rho_0)  ]  
 +    
\iint_K \rho \uvect    \LL_b [ \Div_x (\rho \uvect) ] \psi_K \notag \\ 
 & \ \ - \iint_K \rho [\uvect \otimes \uvect] : \nabla_x \left(\psi_K \LL_b [ \rho - c(\rho_0)]\right)   + 
\iint_K
\Ss_\omega : \nabla_x\left(\psi_K \LL_b [ \rho - c(\rho_0) ]\right) \notag \\ 
& =  \sum_{j=1}^6 I_j .
\end{align}

\begin{itemize} 
\item[--] 
Recall that $p_{\xi, \delta}(\rho, \theta)= p_{M}(\rho, \theta) + \frac{a_\xi}{3}\theta^4 + \delta \rho^\beta$,  where $p_M$ satisfies \eqref{mole_bound_p_M}. 

By means of the estimates  \eqref{uniform-bound-2}, \eqref{bound-theta-L4} and \eqref{rho_5/3}, we get 
\begin{align}\label{esti-I-1}
|I_1| &= \bigg| c(\rho_0) \iint_K  p_{\xi, \delta}(\rho, \theta) \psi_K    \bigg| \notag \\
&\leq c(\rho_0) \iint_K |\psi_K|   \left( \rho^{\frac{5}{3}} + \theta^{\frac{5}{2}} + a_\xi\theta^4 + \delta \rho^\beta \right)    \bigg| \leq C(\lambda, \xi)   .
\end{align} 

\vspace*{.1cm}
\item[--] Next, we find that
\begin{align}
|I_2| &= \bigg|   \iint_K p_{\xi, \delta} (\rho, \theta) \nabla_x \psi_K \cdot \LL_b [\rho - c(\rho_0)] \bigg|   \notag  \\
& \leq \|\nabla_x\psi_K\|_{L^\infty(K)} \int_0^T 
\int_\B \left( \rho^{\frac{5}{3}} + \theta^{\frac{5}{2}} + a_\xi\theta^4 + \delta \rho^\beta \right) \left|\LL_b[\rho-c(\rho_0)]\right| \notag \\
& \leq C(\xi) \int_0^T 
 \left( 1 + \|\rho^{\frac{5}{3}}\|_{L^1(\B)} + \|a_\xi\theta^4\|_{L^1(\B)} + \|\delta \rho^\beta\|_{L^1(\B)}  \right) \|\LL_b [\rho-c(\rho_0)]\|_{L^\infty(\B; \R^3)} \notag \\
&\leq C(\lambda, \xi) \int_0^T  \|\LL_b [\rho-c(\rho_0)]\|_{W^{1,4}_0(\B; \R^3)}  \leq C(\lambda, \xi) \|\rho \|_{L^\infty(0,T; L^4(\B))} \notag \\
&\leq C(\lambda, \xi, \delta),
\end{align} 
thanks to the embedding $W^{1,4}_0(\B)\hookrightarrow L^\infty(\B)$ (since we are in dimension $3$), the relation \eqref{Bogovskii-esti-1}, and the bounds \eqref{uniform-bound-2}, \eqref{bound-theta-L4} and \eqref{rho_5/3}. Here we also used the fact that $\beta \geq 4$ to ensure a proper bound of $\|\rho\|_{L^\infty(0,T; L^4(\B))}$. 

\vspace*{.1cm}

\item[--] By H\"older inequality, we get 
\begin{align}
|I_3|  & = \bigg|\iint_K \partial_t \psi_K  \rho \uvect  \LL_b \left[ \rho - c(\rho_0)  \right] \bigg| \notag \\
&\leq  \|\partial_t\psi_K\|_{L^\infty(K)} \int_0^T  \|\sqrt{\rho} \|_{L^2(\B)} \| \sqrt{\rho} \uvect \|_{L^2(\B; \R^3)} \| \LL_b\left[ \rho - c(\rho_0) \right] \|_{L^\infty(\B; \R^3)}  \notag \\
& \leq C(\lambda) \int_0^T \| \LL_b\left[ \rho - c(\rho_0) \right] \|_{W^{1,4}_0(\B; \mathbb R^3)} \notag \\
& \leq C \int_0^T \| \rho  \|_{L^{4}(\B)} \leq C(\lambda, \delta),
\end{align} 
where we have used \eqref{Bogovskii-esti-1}, the fact that $\beta\geq 4$ and the estimates \eqref{uniform-bound-2}, \eqref{uniform-bound-3} and  \eqref{rho_5/3}.  

\vspace*{.1cm}

\item[--]  By virtue of \eqref{Bogovskii-esti-2}, \eqref{uniform-bound-2} and \eqref{uniform-bound-5}, we get 
\begin{align}
|I_4| &= \bigg|   \iint_{K} \psi_K \rho \uvect \LL_b \left[ \Div_x (\rho \uvect)  \right]     \bigg|
\notag \\
&\leq \|\psi_K\|_{L^\infty(K)} \int_0^T  \| \rho \uvect\|_{L^2(\B; \R^3)} \|\LL_b \left[\Div_x (\rho \uvect)  \right] \|_{L^2(\B; \R^3)}  \notag \\
& \leq C \int_0^T \|\rho \uvect\|^2_{L^2(\B;\R^3)} \leq \int_0^T \|\uvect \|^2_{L^6(\B;\R^3)} \| \rho \|^2_{L^3(\B)}  \notag \\
&\leq 
C \|\rho\|^2_{L^\infty(0,T; L^3(\B) )} \|\uvect \|^2_{L^2(0,T; L^6(\B; \R^3) )} 
\leq C(\lambda, \omega, \delta) .
 \end{align}

\vspace*{.1cm}

\item[--] The fifth term in \eqref{modi-pres-frm-momen} can be estimated as
\begin{align}
|I_5| &= \bigg|   \iint_{K} \rho [\uvect \otimes \uvect] : \nabla_x \left(\psi_K \LL_b \left[ \rho - c(\rho_0) \right] \right)  \bigg| 
\notag \\
& \leq \int_0^T \|\rho[\uvect \otimes \uvect]\|_{L^{\frac{3}{2}}(\B; \R^{3\times 3}) } \| \nabla_x \left(\psi_K \LL_b [\rho - c(\rho_0)] \right) \|_{L^{3}(\B ; \R^3)}   \notag \\
&  \leq C \int_0^T  \|\rho |\uvect|^2 \|_{L^{\frac{3}{2}}(\B; \R^3)}  \|\LL_b [\rho-c(\rho_0)]\|_{W^{1,3}_0(\B;\mathbb R^3)} \notag \\
& \leq C \int_0^T  \|\rho\|_{L^3(\B)} \|\uvect\|^2_{L^6(\B; \R^3)} \|\rho\|_{L^3(\B)}  \leq C(\lambda, \omega, \delta) ,
 \end{align} 
by using H\"older inequality, and the relations \eqref{Bogovskii-esti-1}, \eqref{uniform-bound-2} and \eqref{uniform-bound-5}.

\vspace*{.1cm}

\item[--] Finally,  we compute 
\begin{align}\label{esti-I-6}
|I_6| &= \bigg|   \iint_{K} \Ss_\omega : \nabla_x\left(\psi_K \LL_b [\rho - c(\rho_0)] \right) \bigg| \notag \\
&\leq \int_0^T \|\Ss_\omega\|_{L^{\frac{5}{4}}(\B; \R^3)} \|\nabla_x \left(\psi_K\LL_b [\rho-c(\rho_0)]\right) \|_{L^4(\B; \R^3)} \notag \\
& \leq C\int_0^T \|\Ss_\omega\|_{L^{\frac{5}{4}}(\B; \R^3)} \|\LL_b[\rho-c(\rho_0)] \|_{W^{1,4}_0(\B;\R^3)} \notag \\
&\leq C\|\Ss_\omega\|_{L^{\frac{5}{4}}((0,T)\times \B )} \| \rho \|_{L^4((0,T)\times \B)} \leq C(\lambda, \nu, \xi, \delta) .
\end{align} 
In above, we have used the bound \eqref{uniform-bound-2} and the following:  observe that 
\begin{align*}
\int_0^T \int_{\B} |\Ss_\omega |^{\frac{5}{4}} &\leq C \int_0^T \int_\B \bigg|\frac{1}{\sqrt{\theta}} \sqrt{\Ss_\omega : \nabla_x \uvect} \  \theta   \bigg|^{\frac{5}{4}} \\
& \leq C\left(\int_0^T \int_\B 
\bigg|\frac{1}{\sqrt{\theta}} \sqrt{\Ss_\omega : \nabla_x \uvect}    \bigg|^{2} \right)^{\frac{5}{8}} \left(\int_0^T \int_\B \theta^{\frac{10}{3}}\right)^{\frac{3}{8}},
\end{align*}
and thus, 
\begin{align}\label{bound-Ss-omega}
\|\Ss_\omega \|_{L^{\frac{5}{4}}((0,T)\times \B) } \leq \left( \int_0^T \int_\B \frac{1}{\theta} \Ss_\omega : \nabla \uvect \right)^{\frac{1}{2}} \|\theta \|_{L^{\frac{10}{3}}((0,T)\times \B)} \leq C(\lambda, \nu, \xi) ,
\end{align}
thanks to the bounds \eqref{uniformbound-Stress}, \eqref{bound-theta-L4} and  \eqref{bound-theta-gamma}.

\end{itemize}

\vspace*{.1cm}

Summing up the estimates \eqref{esti-I-1}--\eqref{esti-I-6} in \eqref{modi-pres-frm-momen}, we obtain the required bound \eqref{Bogovskii}.


\begin{remark} 
Here, we must mention that,   due to the fact that the boundaries $\Gamma_\tau$ change with respect to time, the uniform estimate of  like \eqref{Bogovskii} on
the whole space-time cylinder $(0,T)\times \B$ seems to be a delicate matter. On the other hand, the mere
equi-integrability of the pressure could be shown by the method based on special test functions used
in \cite{FEIREISL-JIRI}. {\color{red} More details can be found in the monograph \cite{KMNPW-L}.}
\end{remark}

\vspace*{.1cm}
\noindent
(ix)     Using the classical maximum principle to the equations of species mass fractions
\begin{align}
\rho \left(\partial_t Y_k + \uvect \cdot \nabla_x Y_k      \right) = \Div_x(\zeta_\omega(\theta) \nabla_x Y_k) + \rho \sigma_k \quad \text{in } (0,T) \times \B
\end{align}
with the initial data \eqref{modified-mass-initial}, one has 
\begin{align}\label{L-infinity-bound-Y}
\| Y_k \|_{L^\infty((0,T)\times \B)} \leq C 
\end{align}
(see, for instance, \cite[Section 5.2]{Eduard-CPAA-2008}), where $C>0$ does not depend on any parameters. 

Moreover, by means of \eqref{eq-mass-specs-2-pena}, one has
\begin{align}
\label{regular-bound-Y}
\int_0^T \int_{\B} \zeta_\omega(\theta) |\nabla_x Y_k|^2 \leq C,
\end{align} 
where the constant $C>0$ is independent of $\veps$ and other parameters.  

Using the expression of $\zeta_\omega(\theta)$ in \eqref{def-sigma-omega} and the fact $\zeta(\theta)> \underline{\zeta}>0$  from \eqref{hypo-zeta}, we further have  
\begin{align}\label{bound-nabla-Y-k} 
c_2(\omega)\|\nabla_x Y_k \|^2_{L^2( 0,T; L^2(\B) )} \leq C ,  
\end{align} 
where the constant $c_2(\omega)>0$
	behaves like $c \, \omega$ in $((0,T)\times\B)\setminus Q_T$  ($c>0$ is independent in $\omega$), thanks to the construction of $\zeta_\omega(\theta)$ in \eqref{def-sigma-omega}. 

Furthermore, by using the bounds \eqref{uniform-bound-4} and \eqref{regular-bound-Y}, we get
\begin{align}\label{bound-L-1-zeta-Y}
  \int_0^T \int_{\B} \zeta_\omega(\theta) |\nabla_x Y_k| \leq \frac{1}{2} \int_0^T \int_{\B} \zeta_\omega(\theta) |\nabla_x Y_k|^2 + \frac{1}{2} \int_0^T \int_\B \zeta_\omega(\theta) \leq C (\lambda).
\end{align} 

\vspace*{.2cm}
\noindent 
(x)  We now need to find  proper bounds for  $\disp \rho s_{\xi}$ and $\disp \rho s_\xi \uvect$. 
Recall \eqref{third-law}, and thus there exists some $c>0$ such that 
\begin{align}\label{Fact-1}  
s_M(\rho,\theta)	= S\left(\frac{\rho}{\theta^{\frac{3}{2}}}\right)  \leq c  \quad \text{when } \frac{\rho}{\theta^{3/2}} > 1 ,
\end{align}
and therefore, 
\begin{align}\label{Fact-2} 
\rho \left(s_M (\rho, \theta) + s_{R,\xi}(\rho, \theta)\right) = \rho s_M(\rho, \theta) + \frac{4a_\xi}{3}\theta^3 \leq c\rho + \frac{4a_\xi}{3}\theta^3 \quad \text{for } \frac{\rho}{\theta^{3/2}} >1 .
\end{align}

On the other hand, when $\disp \frac{\rho}{\theta^{3/2}} \leq 1$, we use the strategy developed in \cite[Section 4, formula (4.6)]{Feireisl2012weak} and  according to that, one has  (using  Gibb's relation \eqref{equ-gibbs}  and \eqref{Gibbs-1})
\begin{align*}
s_M(\rho,\theta) \leq 	C (1+ |\log \rho| + [\log \theta]^+) .
\end{align*}
This yields
\begin{equation} 
\begin{aligned}\label{Fact-3-1}
	\rho \left(s_M (\rho, \theta) + s_{R,\xi}(\rho, \theta)\right) = \rho s_M(\rho, \theta) + \frac{4a_\xi}{3}\theta^3
	\leq 	C	\left(\rho + |\rho \log \rho|  + |\rho| [\log \theta]^+ \right) + \frac{4a_\xi}{3}\theta^3  .  
\end{aligned}
\end{equation}
Now, observe that 
\begin{align}\label{bound-rho-log-rho}
|\rho \log \rho| \leq 
\begin{dcases}
	C \rho^{\frac{1}{2}} \ \ \ \text{when } 0< \rho \leq 1, \\
	\frac{3}{2} \rho [\log \theta]^+ \ \ \ \text{when } \rho >1 \ \ (\text{consequently $\theta>1$ since $\frac{\rho}{\theta^{3/2}} \leq 1$}),
\end{dcases}
\end{align} 
where we have used the fact that 
$|\rho^{\frac{1}{2}} \log \rho|$ is bounded for $0<\rho \leq 1$.

Using \eqref{bound-rho-log-rho} in \eqref{Fact-3-1}, we get
\begin{align}\label{bound-rho_s}
\rho \left(s_M (\rho, \theta) + s_{R,\xi}(\rho, \theta)\right)  \leq C \left(  \rho + \rho^{\frac{1}{2}} + \theta^{\frac{3}{2}}[\log \theta]^+ \right) + \frac{4a_\xi}{3}\theta^3
 \quad  \text{ for } \frac{\rho}{\theta^{3/2}} \leq 1 .
\end{align}

Now, recall the form of $s_\xi(\rho, \theta, \Yvect)$ from \eqref{exp-new-s}. In what follows,  the relation \eqref{bound-rho_s} together with  \eqref{uniform-bound-4},  \eqref{rho_5/3} and  \eqref{L-infinity-bound-Y}, we can ensure that  
\begin{align}\label{rho-s-xi}    
\|\rho s_{\xi} \|_{L^q((0,T)\times \B)} \leq C(\lambda)  \quad \text{for certain } q>1,
 \end{align}
where the constant $C(\lambda)>0$ does not depend on  
$\veps$. 

One can  further deduce that 
\begin{align}\label{rho-s-xi-u}
\|\rho s_{\xi} \uvect\|_{L^1((0,T)\times \B)} \leq C(\lambda)  ,
 \end{align}
and again the constant $C(\lambda)>0$ is independent   in  
$\veps$. This can be determined  as follows: the crucial terms to estimate are $a_\xi\theta^3\uvect$ and $\rho Y_k \uvect$ (due  to the formulation of $s_\xi(\rho, \theta, \Yvect)$ and \eqref{bound-rho_s}).   Indeed, we find 
\begin{align*}
\left|\int_0^T \int_\B a_\xi  \theta^3 \uvect \right| 
&\leq C \|\theta^3\|_{L^\infty(0,T; L^{\frac{4}{3}}(\B))} \|\uvect\|_{L^1(0,T; L^4(\B; \mathbb R^3))}\\
& \leq C \left\| \theta^4 \right\|^{\frac{3}{4}}_{L^\infty(0,T; L^1(\B) )} \left\|\uvect \right\|_{L^2(0,T; W^{1,2}_0(\B; \mathbb R^3))},
\end{align*} 
which is uniformly  bounded in $\veps>0$  by means of \eqref{uniform-bound-5} and \eqref{bound-theta-L4}. 

On the other hand,  we have 
\begin{align*}
\left|\int_0^T \int_\B  \rho Y_k \uvect \right| \leq \|Y_k \|_{L^\infty((0,T)\times \B)} \left(  C(\rho_0) + \int_0^T \int_\B \rho |\uvect|^2 \right) , 
\end{align*}
which is again bounded uniformly w.r.t. $\veps>0$ by using \eqref{L-infinity-bound-Y} and \eqref{uniform-bound-3}. 

As a consequence,  the estimate \eqref{rho-s-xi-u} holds true.

\section{Penalization limit: passing with $\veps \to 0$} 


\subsection{A first set of limits}  

Let us  fix all the parameters  $\lambda$, $\nu$, $\xi$,   $\omega$ and $\delta$.   

\vspace*{.2cm}
\noindent 
(i) Passing to the limit $\veps\to 0$, we directly obtain
\begin{align}\label{retriving boundary}
	(\uvect - \V )\cdot \mathbf n \big|_{\Gamma_\tau} = 0 \quad \text{for a.a. } \ \tau \in [0,T],  
\end{align}
so that we can retrieve the impermeability boundary condition \eqref{imperm}.

\vspace*{.2cm} 
\noindent 
(ii)
By \eqref{bound-theta-L4}, \eqref{rho_5/3} and \eqref{bound-theta-gamma}, we respectively have (up to a suitable subsequence)
\begin{align}
&	\theta_\veps \to \theta \quad \text{weakly$^*$ in }   \ L^\infty(0,T; L^4(\B))  \ \text{ as $\veps \to 0$}   , \label{conv-theta-1}   \\
&   \rho_\veps \to \rho \quad \text{weakly$^*$ in }   \ L^\infty(0,T; L^{\frac{5}{3}}(\B)) \ \text{ as $\veps \to 0$}  \, \text{ and} \label{conv-rho-1}   \\   
& \theta_\veps \to \theta \quad \text{weakly in } \  L^2(0,T; W^{1,2}(\B)) \ \text{ as $\veps \to 0$}   .  \label{conv-theta-2} 
\end{align}

\vspace*{.2cm}
\noindent 
(iii)
Due to  \eqref{uniform-bound-4} and \eqref{bound-theta-L4}, we also have 
\begin{align}
\label{conv-theta-8}
	&\theta^5_\veps  \to \overline{\theta^5} \quad \text{weakly in } \ L^1((0,T)\times \B) \ \text{ as $\veps \to 0$} \, \text{ and} \\
	\label{conv-theta-4}
	&	\theta^4_\veps  \to \overline{\theta^4} \quad \text{weakly in } \ L^1((0,T)\times \B) \ \text{ as } \veps \to 0 .
\end{align}
Here and in the sequel, the ``bar" denotes a weak limit of a composed or nonlinear function. 

\vspace*{.2cm} 
\noindent 
(iv)
Thanks to  \eqref{uniform-bound-5},
we have 
\begin{align}
	\label{conv-u}
	\uvect_\veps  \to \uvect \quad \text{weakly in } \ L^2(0,T; W^{1,2}_0(\B; \mathbb R^3)) \ \ \text{as }\veps \to 0 .
\end{align}

\vspace*{.2cm}
\noindent 
(v) 
Moreover, we have better convergence result of  $\{\rho_\veps\}_{\veps>0}$ than \eqref{conv-rho-1}.  One can get 
\begin{align}\label{conv-rho-2} 
 \rho_\veps \to \rho \quad \text{in }  \ \C_{\text{weak}}([0,T]; L^{\frac{5}{3}}(\B)) \ \text{ as }\veps \to 0 .  
\end{align}   
Indeed, from the continuity equation \eqref{continuity-eq} and the bound \eqref{bound-rho-u-5/4}, we can check that the functions
\begin{align*}
\left\{t \mapsto \bigg(\int_{\B} \rho_\veps \phi\bigg)(t) \right\} \, \text{ are equi-continuous and bounded in } \C^0([0,T]) \ \text{for any } \phi \in \C^\infty_c(\B).  
\end{align*} 
  Consequently, by standard Arzel\`a-Ascoli theorem, we have 
\begin{align*}
\int_\B \rho_\veps \phi 
\to \int_\B  \rho \phi \ \  \text{in } \C^0([0,T]) \ \text{for any } \phi \in \C^\infty_c(\B)   .
\end{align*} 
Since $\rho_\veps$ satisfies 
the bound \eqref{rho_5/3}, the above convergence can be extended  for each $\phi \in L^{\frac{5}{2}}(\B)$ via density argument and thus, the limit \eqref{conv-rho-2} holds true.

\vspace*{.2cm}
\noindent 
(vi)
Using the limits \eqref{conv-u}, \eqref{conv-rho-2}, the bound \eqref{bound-rho-u-5/4} and the fact that  $L^{\frac{5}{3}}(\B) \hookrightarrow W^{-1,2}(\B)$ is compact, one has 
\begin{align}\label{conv-rho-u}
	\rho_\veps \uvect_\veps \to \rho \uvect \quad \text{weakly$^*$ in } \ L^{\infty}(0,T; L^{\frac{5}{4}}(\B; \mathbb R^3))  \ \text{ as }\veps \to 0.
\end{align}

\vspace*{.2cm} 
\noindent 
(vii)  On the other hand, thanks to the bound \eqref{bound-convective-term}, we have 
%
\begin{align}\label{weak-limit-rho_u-cross_u}
\rho_\veps \uvect_\veps \otimes \uvect_\veps \to  \overline{\rho \uvect \otimes \uvect} \quad \text{weakly in } L^2(0,T; L^{\frac{30}{29}}(\B; \mathbb R^3)) .
\end{align}
Now, our aim is to show that 
\begin{align}\label{pointwise-convective}
\overline{\rho \uvect \otimes \uvect} = \rho \uvect \otimes \uvect \quad \text{a.a. in } (0,T)\times \B . 
\end{align}
To show this, let us first consider any space-time cylinder $(T_1, T_2)\times \mathcal O \subset [0,T]\times \B$ 
 such that 
 \begin{align}\label{condi-space-time}
[T_1, T_2] \times \overline{\mathcal O} \cap \left( \cup_{\tau \in [0,T]} ( \{\tau\} \times \Gamma_\tau ) \right) = \emptyset.  
 \end{align} 
Then, from the momentum equation \eqref{momentum-eq} and the uniform   bounds (w.r.t. $\veps>0$) of the convective term $\rho_\veps [\uvect_\veps\otimes \uvect_\veps]$ from \eqref{bound-convective-term},   pressure term $p_{\xi, \delta}(\rho_\veps, \theta_\veps)$ given by \eqref{Bogovskii} and the bound of $\Ss_\omega(\nabla_x \uvect_\veps)$ from \eqref{bound-Ss-omega},  one can find  that 
the functions 
\begin{align*}
\left\{t \mapsto \int_{\mathcal O} \rho_\veps \uvect_\veps \cdot \boldphi    \right\} \text{ are equi-continuous and bounded in } \C^0([T_1,T_2]) \text{ for any } \boldphi \in \C_c^\infty(\mathcal O; \R^3).
\end{align*} 
Consequently, by Arzel\`{a}-Ascoli theorem, we deduce  that 
\begin{align*}
\rho_\veps \uvect_\veps \to \rho \uvect \ \text{ in } \, \C_{\textnormal{weak}}([T_1,T_2]; L^{\frac{5}{4}}(\mathcal O; \mathbb R^3)) , 
\end{align*} 
for any space-time cylinder $(T_1,T_2)\times \mathcal O\subset [0,T]\times \B$ satisfying \eqref{condi-space-time}. 

On the other hand, since $L^{\frac{5}{4}}(\mathcal O)$ is compactly embedded into $W^{-1,2}(\mathcal O)$, we infer that 
\begin{align*}  
\rho_\veps \uvect_{\veps} \to \rho \uvect \ \text{ strongly in } \, \C_{\textnormal{weak}}([T_1,T_2]; W^{-1,2}(\mathcal O; \R^3) ) .
\end{align*}
%

The above limit, together with the weak convergence of the velocities $\{\uvect_\veps\}_{\veps>0}$ in \eqref{conv-u} give rise to \eqref{pointwise-convective}.

\vspace*{.2cm} 
\noindent 
(viii) Finally, by virtue of \eqref{L-infinity-bound-Y} and \eqref{regular-bound-Y}, we have 
\begin{align}\label{limits-Y}
\begin{dcases} 
Y_{k,\veps} \to Y_k \ \text{ weakly$^*$ in } L^\infty((0,T) \times \B), \text{ and}\\
Y_{k, \veps} \to Y_k  \ \text{ weakly in } L^2(0,T; W^{1,2}(\B)) 
\end{dcases}
\end{align} 
as $\veps\to 0$ (up to a suitable subsequence, if necessary) for $k=1,...,n$. 

\vspace*{.1cm}

We can even achieve strong convergence of the sequence $\{Y_{k,\veps}\}_{\veps>0}$ on the set $\{\rho >0\}$, more precisely
\begin{align}\label{limit-rho-Y-k} 
\int_0^T \int_\B \rho |Y_{k,\veps}|^2 \to \int_0^T \int_\B \rho |Y_k|^2   \ \ \ \forall k=1,\cdots, n.
\end{align} 
Indeed, by  using \eqref{limits-Y}, \eqref{conv-rho-2} and \eqref{eq-mass-species-1-pena}, one has
\begin{align}\label{limits-rho-Y}
\begin{dcases} 
 \rho_\veps Y_{k, \veps} \to \rho Y_k \ \text{ in } \,\C_{\text{weak}}([0,T]; L^{\frac{5}{3}}(\B)) , \\
\rho_\veps|Y_{k,\veps}|^2 \to \rho |Y_k|^2  \ \text{ weakly$^*$ in } \,  L^\infty(0,T; L^{\frac{5}{3}}(\B)),   \text{ and}\\
(\rho_\veps - \rho)|Y_{k,\veps}|^2 \to  0 \ \text{ weakly$^*$ in } \, L^\infty(0,T; L^{\frac{5}{3}}(\B) ) 
\end{dcases}
\end{align} 
for each $k=1, ... , n$. 

In fact, the limit \eqref{limit-rho-Y-k} yields 
\begin{align}\label{pointwise-conv-Y} 
Y_{k,\veps} \to Y_k \ \text{ a.e. in the set } \, \{\rho >0 \} \subset (0,T) \times \B  \ \text{ for } k=1, ... , n .
\end{align}

\subsection{Pointwise convergence of the temperature and the density} 

(i) In order to show a.e. convergence of
the temperature, we follow the technique  based on the Div-Curl Lemma  (see Tartar \cite{Tartar1979compensated}) and
Young measures methods (see Pedregal \cite{Pedregal1997parametrized}), which has been discussed for instance in  \cite[Section 3.6.2]{Feireisl-Novotny-book} at length. To this end,       
we  set 
\begin{align}
&	\mathbf U_\veps = \left[\rho_\veps (s_M+s_{R,\xi}) (\rho_\veps , \theta_\veps ), \  \rho_\veps (s_M+s_{R,\xi})(\rho_\veps, \theta_\veps) \uvect_\veps + \frac{\kappa_\nu(\theta_\veps) \nabla_x \theta_\veps}{\theta_\veps}   \right], \\
& \mathbf W_\veps = \left[G(\theta_\veps), 0, 0, 0  \right] , 
\end{align}
where $G$ is a bounded and globally Lipschitz function in 
$[0, \infty)$.  Then due to the estimates obtained in previous section, $\Div_{t,x} \mathbf U_\veps$ is precompact in $W^{-1,s}((0,T)\times \B)$ and $\text{Curl}_{t,x} \mathbf W_\veps$ is precompact in $W^{-1,s}((0,T)\times \B)^{4\times 4}$ with certain $s>1$. Therefore, using the Div-Curl lemma 
for $\mathbf U_\veps$ and $\mathbf W_\veps$, we may derive that 
\begin{align}\label{ineq-1}
\overline{\rho (s_M+ s_{R,\xi})(\rho, \theta) G(\theta)} = \overline{\rho (s_M+ s_{R,\xi})(\rho, \theta) }\ \overline{G(\theta)} .
\end{align}
In fact, by applying the theory of parameterized (Young) measures (see \cite[Section 3.6.2]{Feireisl-Novotny-book}), one can show that
\begin{align}\label{ineq-2} 
\overline{\rho s_M(\rho, \theta) G(\theta)} \geq 	\overline{\rho s_M(\rho, \theta)} \ \overline{ G(\theta)} \ \text{ and } \ \overline{\theta^3 G(\theta)} \geq \overline{\theta^3} \ \overline{G(\theta)} .
\end{align}
Combining \eqref{ineq-1}--\eqref{ineq-2} and taking $G(\theta)= \theta$, we deduce 
\begin{align*}
\overline{\theta^4} = \overline{\theta^3} \, \theta  ,
\end{align*}
which  yields 
\begin{align}\label{limit-theta}
\theta_\veps \to \theta  \quad \text{a.a. in } \ (0,T)\times \B. 
\end{align}
Moreover,  thanks to \eqref{bound-log-theta} and  using the generalized Poincar\'{e} inequality in \Cref{Poincare}, one can prove that  $\log \theta \in L^2((0,T) \times \B)$ which ensures that the  limit temperature is positive a.e. on the set $(0,T)\times \B$.

\vspace*{.2cm}
\noindent 
(ii)  Next, we show pointwise convergence of the sequence $\{\rho_\veps\}_{\veps>0}$ in $(0,T)\times \B$. 

We define   $T_k(\rho) = \min\{\rho, k\}$. 
Similar to the analysis in \cite{Feireisl-NSF-1}, one can find the {\em effective viscous pressure identity}:
\begin{align}\label{effective-idendity}
\overline{p_{\xi,\delta}(\rho) T_k(\rho)} - \overline{p_{\xi,\delta}(\rho)} \ \overline{T_k(\rho)}  = \left(\frac{4}{3} \mu_\omega + \eta_\omega\right) \left( \overline{T_k(\rho) \Div_x \uvect} - \overline{T_{k}(\rho)} \, \Div_x \uvect \right) ,
\end{align} 
which holds on each compact set $K \subset (0,T)\times \B$ satisfying 
\begin{align*}
K \cap \left( \cup_{\tau \in [0,T]} (\{\tau\} \times \Gamma_\tau) \right). 
\end{align*}

Now, following \cite{Eduard-CPAA-2008} (see also  \cite[Chapter 6]{Feireisl-NSF-2} and \cite{Feireisl-NSF-1}), we introduce the
 {\em oscillations defect measure} 
 \begin{align*}
\textbf{osc}_q [\rho_\veps \to \rho](K) = \sup_{k\geq 0} \left( \limsup_{\veps \to 0} \iint_K \left|T_k(\rho_\veps) - T_k(\rho) \right|^q \right) ,
 \end{align*} 
 and then use \eqref{effective-idendity} (as well as \eqref{Bogovskii}) to conclude that 
 \begin{align*}
\textbf{osc}_{\frac{8}{3}}[\rho_\veps \to \rho](K)  \leq C(\omega), 
 \end{align*} 
where the constant $C(\omega)$ is independent in $\veps$ and the sets $K$. Thus, 
\begin{align*}
\textbf{osc}_{\frac{8}{3}}[\rho_\veps \to \rho]( (0,T)\times \B )  \leq C(\omega) . 
 \end{align*} 
This implies the desired conclusion 
\begin{align}\label{limit-rho}
\rho_\veps \to \rho \, \text{ a.e. in } (0,T) \times \B  .  
\end{align} 
by virtue of the procedure developed in \cite[Section 7]{Eduard-CPAA-2008} (we also refer \cite[Chapter 6]{Feireisl-NSF-2}).

\vspace*{.2cm}

\begin{itemize} 
\item Then, by using  \eqref{conv-u} and \eqref{limit-theta}, one can identify 
\begin{align}\label{limit-stress}
\Ss_\omega(\theta_\veps, \nabla_x \uvect_\veps) \to \Ss_\omega(\theta, \nabla_x \uvect) \  \  &\text{ weakly in } \ L^1((0,T)\times \B) .
\end{align} 
Moreover, thanks to the bounds \eqref{rho-s-xi}, \eqref{rho-s-xi-u} and the limits \eqref{conv-u}, \eqref{pointwise-conv-Y}, \eqref{limit-theta} and \eqref{limit-rho}, we achieve that  
\begin{align}\label{limits-stress-entropy}
\begin{dcases}
\rho_\veps  s_\xi
(\rho_\veps, \theta_\veps) \to \rho s_{\xi}(\rho, \theta) \ \ &\text{ weakly in } \ L^1((0,T)\times \B) \, \text{ and} \\
    \rho_\veps s_\xi(\rho_\veps, \theta_\veps) \uvect_\veps \to \rho s_\xi (\rho, \theta) \uvect \ \ &\text{ weakly in } \ L^1((0,T)\times \B) 
\end{dcases}
\end{align}
up to a suitable subsequence. 
In particular, one can show that 
\begin{align}\label{limit-rho-s-M-s-R}
\begin{dcases}
\rho_\veps  (s_M + s_{R,\xi})
(\rho_\veps, \theta_\veps) \to \rho (s_M + s_{R,\xi})(\rho, \theta) \ \ &\text{ weakly in } \ L^1((0,T)\times \B),  \\
    \rho_\veps (s_M + s_{R,\xi})(\rho_\veps, \theta_\veps) \uvect_\veps \to \rho (s_M + s_{R,\xi})(\rho, \theta) \uvect \ \ &\text{ weakly in } \ L^1((0,T)\times \B) ,
\end{dcases}
\end{align} 
and 
\begin{align}\label{limit-rho-Y-k-u}
\begin{dcases}
\rho_\veps  Y_{k,\veps}
\to \rho Y_k \ \ &\text{ weakly in } \ L^1((0,T)\times \B),  \\
    \rho_\veps Y_{k,\veps} \uvect_\veps \to \rho Y_k \uvect \ \ &\text{ weakly in } \ L^1((0,T)\times \B) .
\end{dcases}
\end{align} 

\vspace*{.2cm}

\item Now, by using the pressure estimate \eqref{Bogovskii}, and the strong convergence results \eqref{limit-theta} and \eqref{limit-rho}, we deduce that
%
%
\begin{align}\label{limit-pressure}
p_{\xi, \delta}(\rho_\veps, \theta_\veps) = p_M(\rho_\veps, \theta_\veps) + \frac{a_\xi}{3} \theta^4_\veps + \delta \rho^\beta_\veps \to    p_{M}(\rho, \theta) + \frac{a_\xi}{3} {\theta^4} + \delta {\rho^\beta}  \text{ weakly in } L^1(K) 
\end{align}
as $\veps \to 0$, where   $K$ is any compact set of $(0,T)\times \B$ satisfying \eqref{Set-K}. 
\end{itemize}

\subsection{The limit system as $\veps\to 0$} In this subsection, we summarize  the limiting  behaviors of all the quantities from the previous two subsections.

\vspace*{.2cm}
\noindent 
(i) Passing to the limit as $\veps\to 0$, one can observe that  the continuity equation satisfies the same integral identity as  \eqref{weak-conti}.

\vspace*{.2cm} 
\noindent 
(ii) Next, we proceed to pass to the limit in the momentum equation \eqref{weak-momen}. Keeping in mind the local estimates (and limits) of the pressure term (see \eqref{Bogovskii} and \eqref{limit-pressure}), we consider the test functions 
\begin{align}\label{test-func}
	\boldvphi \in \C_c^1( [0,T) ; W^{1, \infty}(\B, \mathbb R^3) ) , \ \ \ \text{Supp}\, [\Div_x  \boldvphi(\tau, \cdot)] \cap \Gamma_\tau = \emptyset, \quad \boldvphi \cdot \mathbf n |_{\Gamma_\tau} = 0  \ \ \ \forall \tau \in [0,T] .
\end{align}  
Then, using  the limits in the previous two subsections, we have, upon  $\veps \to 0$ in \eqref{weak-momen}, that
	\begin{align}\label{weak-momen-limit}
		-	\int_0^T \int_{\B} \left( \rho \uvect \cdot \partial_t  \boldvphi   +  \rho[\uvect \otimes \uvect] : \nabla_x  \boldvphi + p_{\xi, \delta}(\rho, \theta) \Div_x \boldvphi   \right)   
		+\int_0^T \int_{\B} \Ss_\omega : \nabla_x \boldvphi \notag  \\
		  =
		\int_{\B}   (\rho \uvect)_{0,\delta}\cdot  \boldvphi(0, \cdot) 
	\end{align}
for any test function as in \eqref{test-func}, where we have also used the boundary  condition \eqref{retriving boundary}.

\vspace*{.2cm}
\noindent 
(iii)  Thanks to the bound \eqref{bound-L-1-zeta-Y} and the limit \eqref{limit-theta},  one can get 
\begin{align}\label{limit-zeta-Y-k}
\zeta_{\omega}(\theta_\veps) \nabla_x Y_{k, \veps} \to  \zeta_{\omega}(\theta) \nabla_x Y_{k} \quad \text{weakly in } \ L^1((0,T)\times \B) . 
\end{align} 
Using \eqref{limit-zeta-Y-k} and the limits    \eqref{limits-Y}$_2$ and \eqref{limit-rho-Y-k-u}  in \eqref{eq-mass-species-1-pena}, the weak formulation for species mass fraction reads as (upon $\veps\to 0$) 
\begin{align}\label{eq-mass-species-limit-eps}
-\int_0^T \int_{\B} \big[\rho Y_k \partial_t \vphi + \rho Y_k \uvect \cdot \nabla_x \vphi  - \zeta_\omega(\theta) \nabla_x Y_k \cdot \nabla_x \vphi   \big] = & \notag \\
		\int_0^T \int_{\B}  \rho \sigma_k \vphi + \int_{\B} \rho_{0,\delta} Y_{k,0, \delta} \vphi(0,\cdot)  \quad &\text{ for  } k=1, ..., n  ,
	\end{align}
which is to be satisfied 	for any test function $\vphi \in \C^1([0,T)\times \B   ;\mathbb R)$ with  $\vphi\geq 0$.  
Moreover, the relation \eqref{eq-mass-specs-2-pena} holds true in the limiting case.

\vspace*{.2cm} 
\noindent 
(iv) Further, by using \eqref{bound-kappa-theta-2} and the limit \eqref{limit-theta}, we have
\begin{align}\label{weak-limit-kappa-theta}
	\frac{\kappa_\nu(\theta_\veps)}{\theta_\veps} \nabla_x \theta_\veps \to  \frac{\kappa_\nu(\theta)}{\theta} \nabla_x \theta \quad \text{weakly in } \ L^1((0,T)\times \B) . 
\end{align}
We also have that the terms $\disp \frac{1}{\theta_\veps} \Ss_\omega(\theta_\veps, \nabla_x \uvect_\veps): \nabla_x \uvect_\veps$ and 
 $\disp \frac{\kappa_\nu(\theta_\veps)|\nabla_x \theta_\veps|^2}{\theta_\veps^2}$ are weakly lower semicontinuous. 


These, together with  the limits 
 \eqref{conv-theta-4},  \eqref{limits-Y}$_2$,   \eqref{limit-theta}, \eqref{limit-rho}, \eqref{limits-stress-entropy} and \eqref{limit-zeta-Y-k}, we obtain from \eqref{penalized-entropy-1} (upon $\veps\to 0$),
\begin{align}\label{limit-enropy-ineq} 
	- & \int_0^T \int_{\B} \bigg(\rho s_\xi \left(\partial_t \vphi +  \uvect \cdot \nabla_x \vphi \right) - \frac{\kappa_\nu(\theta)}{\theta} \nabla_x \theta \cdot \nabla_x \vphi   - \sum_{k=1}^n s_k  \zeta_{\omega}(\theta) \nabla_x Y_k        \cdot \nabla_x \vphi \bigg)  \notag \\
	 & \qquad  - \int_{\B} (\rho s)_{0,\delta} \vphi(0,\cdot)   + \int_0^T \int_\B \lambda \theta^4 \vphi \notag \\
& 	\geq 
	\int_0^T\int_{\B} \frac{\vphi}{\theta}  \left( \Ss_\omega : \nabla_x \uvect  + \frac{\kappa_\nu(\theta)|\nabla_x\theta|^2  }{\theta} - \sum_{k=1}^n \rho (h_k - \theta s_k)\sigma_k \right) 
	\end{align}
for any  test function $\vphi \in \C^1([0,T)\times \B; \mathbb R)$ with $\vphi \geq 0.$

\vspace*{.2cm}
\noindent 
(v)
Let us pass to the limit $\veps\to 0$ in the  energy balance \eqref{energy-pena-1}. Thanks to the almost everywhere convergence results \eqref{limit-theta}, \eqref{limit-rho}, the bound \eqref{bound-rho-e-xi} and the fact that 
$\{\rho_\veps( e_M + e_{R,\xi})(\rho_\veps, \theta_\veps)\}_{\veps}$ is nonnegative, we have, using the Fatou's lemma, 
\begin{align*}
\limsup_{\veps \to 0} \int_0^T \int_{\B} \rho_\veps (e_M+e_{R,\xi})(\rho_\veps, \theta_\veps ) \partial_t \psi \leq \int_0^T \int_{\B} \rho(e_M+ e_{R,\xi})(\rho, \theta) \partial_t \psi
\end{align*} 
as long as  $\psi \in \C^1_c([0,T))$ such that $\psi\geq 0$ and $\partial_t \psi \leq 0.$

Similarly, one has 
\begin{align*}
	\limsup_{\veps \to 0} \int_0^T \int_{\B} \rho_\veps |\uvect_\veps|^2 \partial_t \psi \leq 	 \int_0^T \int_{\B} \rho |\uvect|^2 \partial_t \psi . 
\end{align*}
Using the above information 
and gathering all the limits \eqref{limit-theta}, \eqref{limit-rho},  \eqref{weak-limit-kappa-theta},  \eqref{limit-pressure},  \eqref{weak-limit-rho_u-cross_u}, the limiting  ``energy inequality" reads as (passing to the limit as $\veps\to 0$ in \eqref{energy-pena-1}) 
\begin{align}\label{limit-energy-pena}
&	-	\int_0^T \partial_t \psi \int_{\B}  \left(\frac{1}{2} \rho |\uvect|^2  + \rho e_M(\rho,\theta)  + \rho e_{R,\xi}(\rho, \theta)  + \frac{\delta}{\beta-1} \rho^\beta \right) 
	+ \int_0^T \int_\B \lambda \theta^5 \psi	
		\notag   \\
	 \leq &	-\int_0^T  \psi \int_{\B}  \big(\rho[\uvect \otimes \uvect]    : \nabla_x \V  - \Ss_\omega : \nabla_x \V  + p_{\xi, \delta}(\rho,\theta)\, \Div_x \V  \big) 
		+ \int_0^T \int_{\B} \partial_t(\rho \uvect) \cdot \V  \psi  
		\notag \\ 
 &  -\int_0^T \psi \int_{\B} \sum_{k=1}^n \rho h_k \sigma_k +\psi(0) \int_{\B} 	  \left(\frac{1}{2} \frac{|(\rho \uvect)_{0,\delta}|^2}{\rho_{0,\delta}}  + \rho_{0,\delta} \left(e_M + e_{R,\xi}\right)(\rho_{0,\delta},\theta_{0,\delta})  + \frac{\delta}{\beta-1} \rho_{0,\delta}^\beta \right)
\end{align}
for all $\psi \in \C^1_c([0,T))$ such that $\psi\geq 0$ and  $\partial_t \psi \leq 0$.

\vspace*{.2cm}
\noindent
(vi)  Finally, by using all the above limits and \eqref{limit-rho-s-M-s-R}, the modified energy inequality \eqref{energy-pena-3} becomes 
 \begin{align}\label{energy-pena-3-limit-veps}
 		& \int_{\B}  \left(\frac{1}{2} \rho |\uvect|^2  + \rho( e_M +  e_{R,\xi}) - \rho (s_M +s_{R, \xi})  + \frac{\delta \rho^\beta}{\beta-1}\right)(\tau, \cdot) 
+  \int_0^\tau \int_\B \lambda \theta^5 \notag \\
 	&	  + \int_0^\tau \int_\B \frac{1}{\theta}    \left( \Ss_\omega : \nabla_x \uvect  + \frac{\kappa_\nu(\theta)|\nabla_x\theta|^2  }{\theta} - \sum_{k=1}^n \rho h_k \sigma_k \right) \notag   \\
 	\leq &	-\int_0^\tau  \int_{\B}  \big(\rho[\uvect \otimes \uvect]    : \nabla_x \V  - \Ss_\omega : \nabla_x \V  + p_{\xi, \delta}(\rho,\theta)\, \Div_x \V  + \rho \uvect \cdot \partial_t \V   \big) \notag \\
 		& + \int_\B  (\rho \uvect \cdot \V)(\tau, \cdot) - \int_\B (\rho \uvect)_{0,\delta} \cdot \V(0, \cdot)    - \int_0^\tau \int_{\B} \sum_{k=1}^n \rho h_k \sigma_k  + \int_0^\tau \int_\B\lambda \theta^4     \notag \\ 
 		& + \int_{\B} 	  \left(\frac{1}{2} \frac{|(\rho \uvect)_{0,\delta}|^2}{\rho_{0,\delta}}  + \rho_{0,\delta} (e_M+ e_{R, \xi})(\rho_{0,\delta},\theta_{0,\delta}) - \rho_{0,\delta} (s_M+ s_{R, \xi})(\rho_{0,\delta},\theta_{0,\delta})   + \frac{\delta \rho_{0,\delta}^\beta}{\beta-1}     \right)
 	\end{align}
 for a.a.  $\tau \in [0,T]$.

\section{Get rid of solid part: passing to the limits of other parameters}\label{Section:Solid-part}

Let us take care of the density-dependent terms in the solid part $((0,T)\times \B) \setminus Q_T$. We use \cite[Lemma 4.1]{Sarka-Kreml-Neustupa-Feireisl-Stebel} (see also \cite[Section 4.1.4]{sarka-aneta-al-JMPA}) to conclude that 
the density $\rho$ remains ``zero" on the solid part if it was so initially, thanks to the continuity equation and the fact that density is square-integrable. Here, we must mention that the square-integrability of  density is identified from the estimate of  $\delta \rho^\beta$ given by \eqref{uniform-bound-2}.

We also recall that the parameters $\omega$, $\xi$ and $\nu$ are not  involved in $\Omega_t$, $t\in [0,T]$, by means of our extension strategy given in the beginning of Section \ref{sec-penalized}.

\vspace*{.2cm}
\noindent 
(i) This leads to the following weak formulation for the continuity equation (of course, after passing to the limit as $\veps \to 0$ from previous section), 
\begin{align}\label{weak-conti-recover}
	-\int_0^T \int_{\Omega_t} \rho B(\rho)\left( \partial_t \vphi +    \uvect \cdot \nabla_x \vphi \right) 
	+	\int_0^T \int_{\Omega_t} b(\rho) \Div_x \uvect \vphi 
	=  \int_{\Omega_0} \rho_{0,\delta} B(\rho_{0,\delta})\vphi(0, \cdot)
\end{align}
for any  test function $\vphi \in \C^1_c( [0,T)\times \B; \mathbb R)$  and any $b \in L^\infty \cap \C([0,+\infty))$ such that $b(0)=0$ and $\displaystyle B(\rho) = B(1)+ \int_{1}^\rho \frac{b(z)}{z^2}$. 

\vspace*{.2cm}
\noindent 
(ii) Using the fact $\rho=0$ in $\B\setminus \Omega_t$, the momentum equation reads 
	\begin{align}\label{weak-momen-recover}
		-	\int_0^T \int_{\Omega_t} \big( \rho \uvect \cdot \partial_t  \boldvphi   +  \rho[\uvect \otimes \uvect] : \nabla_x  \boldvphi + p_{\delta}(\rho, \theta) \Div_x \boldvphi   \big)   
		+\int_0^T \int_{\Omega_t} \Ss : \nabla_x \boldvphi 
	 \notag \\
		=
		\int_{\Omega_0}   (\rho \uvect)_{0,\delta}\cdot  \boldvphi(0, \cdot) -\int_0^T \int_{\B\setminus \Omega_t} \Ss_\omega : \nabla_x \boldvphi  + \int_0^T \int_{\B\setminus \Omega_t} \frac{a_\xi}{3}\theta^4 \Div_x \boldvphi 
	\end{align}
for any test function $\boldvphi$ satisfying \eqref{test-func}.

\vspace*{.2cm}
\noindent 
(iii)     The weak formulation for the species mass fraction in the reference domain looks like 
\begin{align}\label{eq-mass-species-solid}
	-\int_0^T \int_{\Omega_t} \big[\rho Y_k \partial_t \vphi + \rho Y_k \uvect \cdot \nabla_x \vphi  - \zeta(\theta) \nabla_x Y_k \cdot \nabla_x \vphi   \big] +  \int_0^T \int_{\B \setminus \Omega_t} \zeta_\omega(\theta) \nabla_x Y_k \cdot \nabla_x \vphi  = \notag  \\
	\int_0^T \int_{\Omega_t}  \rho \sigma_k \vphi + \int_{\Omega_0} \rho_{0,\delta} Y_{k,0, \delta} \vphi(0,\cdot)  \quad \text{ for  } k=1, ..., n  ,
	\end{align}
to be satisfied for any test function $\vphi \in \C^1([0,T)\times \B   ;\mathbb R)$,  $\vphi\geq 0$,  together with  
\begin{align}\label{eq-mass-specs-solid-2} 
-\int_0^T \partial_t  \psi \int_{\Omega_t}  \rho G(\Yvect) + & \int_0^T \psi \int_{\Omega_t} \sum_{k=1}^n\zeta(\theta) G_0 |\nabla_x Y_k|^2 
  + \int_0^T \psi \int_{\B \setminus \Omega_t} \sum_{k=1}^n\zeta_\omega(\theta) G_0 |\nabla_x Y_k|^2 
		\notag \\
		& \qquad 
		\leq  
		\int_0^T \psi \int_{\Omega_t}  \sum_{k=1}^n \rho \frac{\partial G(\Yvect)}{\partial Y_k} \sigma_k + \int_{\Omega_0} \rho_{0,\delta} G(\Yvect_{0,\delta}) \psi(0) 
	\end{align}
for any $\psi\in \C^1_c([0,T))$ with $\psi\geq 0$, $\partial_t\psi\leq 0$ and any convex function 
$G\in \C^2(\mathbb R^n;\mathbb R)$ verifying \eqref{convex-func}.

Since the quantity 
 $\int_0^T \int_{\B\setminus \Omega_t} \sum_{k=1}^n \zeta_\omega(\theta) G_0 |\nabla_x Y_k|^2$ is positive in the left-hand side of 
\eqref{eq-mass-specs-solid-2}, one can ignore this term in the formulation. 


\vspace*{.2cm}
\noindent 
(iv) Next, we shall focus on the entropy inequality (recall   that $s_\xi(\rho, \theta)=s_M(\rho, \theta)+ \frac{4a_\xi}{3\rho}\theta^3 
+\sum_{k=1}^n s_k Y_k$)
\begin{align}\label{enropy-ineq-recover}
		-\int_0^T \int_{\Omega_t} \rho s(\rho,\theta, \Yvect) \left(\partial_t \vphi + \
		\uvect \cdot \nabla_x \vphi \right) -\int_0^T \int_{\B\setminus \Omega_t} \frac{4a_\xi}{3}\theta^3  \left(\partial_t \vphi + \
			\uvect \cdot \nabla_x \vphi \right) \notag  \\
		+ \int_0^T \int_{\Omega_t} \frac{\kappa(\theta, t,x)}{\theta} \nabla_x \theta \cdot \nabla_x \vphi 
		+  \int_0^T \int_{\B \setminus \Omega_t} \frac{\kappa_\nu(\theta, t,x)}{\theta} \nabla_x \theta \cdot \nabla_x \vphi 
		\notag \\
		- \int_{\Omega_0} \rho_{0,\delta} s(\rho_{0,\delta}, \theta_{0,\delta}) \vphi(0,\cdot) 	- \int_{\B\setminus \Omega_0} \frac{4a_\xi}{3}\theta_{0,\delta}^3 \vphi(0,\cdot)
	+	 \int_0^T \int_\B \lambda \theta^4 \vphi 
		\notag \\
		\geq \int_0^T \int_{\Omega_t} \frac{\vphi}{\theta}  \left( \Ss: \nabla_x \uvect 
    + \frac{\kappa(\theta, t,x)}{\theta} |\nabla_x \theta|^2  -\sum_{k=1}^n \rho(h_k - \theta s_k) \sigma_k\right)\notag \\
		 + \int_0^T \int_{\B\setminus \Omega_t} \frac{\vphi}{\theta}  \left( \Ss_\omega : \nabla_x \uvect   + \frac{\kappa_\nu(\theta, t,x)}{\theta} |\nabla_x \theta|^2   \right)
	\end{align}
for any test function $\vphi \in \C^1_c([0,T)\times \B)$ with $\vphi\geq 0$.

\vspace*{.2cm}
\noindent 
(v)  We shall now look at the energy inequality. 
It has the following form now (recall $e_{R,\xi}(\rho,\theta)$ from \eqref{energy-pena}):
\begin{align}\label{limit-energy-pena-recover}
&	-	\int_0^T \partial_t \psi \int_{\Omega_t}  \left(\frac{1}{2} \rho |\uvect|^2  + \rho e_M(\rho,\theta)  + \rho e_{R}(\rho, \theta)  + \frac{\delta}{\beta-1} \rho^\beta \right)  \notag \\
	&	- \int_0^T \partial_t \psi \int_{\B\setminus \Omega_t} a_\xi \theta^4     + \int_0^T \int_\B \lambda \theta^5 \psi	
		 \notag  \\
	 \leq &	-\int_0^T  \psi \int_{\Omega_t}  \big(\rho[\uvect \otimes \uvect]    : \nabla_x \V  - \Ss : \nabla_x \V  + p_{\delta}(\rho,\theta)\, \Div_x \V  \big) 
		+ \int_0^T \int_{\Omega_t} \partial_t(\rho \uvect) \cdot \V  \psi  
		\notag \\ 
 &  -\int_0^T \psi \int_{\Omega_t} \sum_{k=1}^n \rho h_k \sigma_k +\psi(0) \int_{\Omega_0} 	  \left(\frac{1}{2} \frac{|(\rho \uvect)_{0,\delta}|^2}{\rho_{0,\delta}}  + \rho_{0,\delta} \left(e_M + e_{R}\right)(\rho_{0,\delta},\theta_{0,\delta})  + \frac{\delta}{\beta-1} \rho_{0,\delta}^\beta \right) \notag \\
 &  + \int_0^T \psi \int_{\B \setminus \Omega_t} \Ss_\omega : \nabla_x \V -  \int_0^T \psi \int_{\B \setminus \Omega_t} \frac{a_\xi}{3}\theta^4 \Div_x \V  + \psi(0) \int_{\B \setminus \Omega_0} a_\xi \theta^4_{0,\delta}  
	\end{align}
for all $\psi \in \C^1_c([0,T))$ such that $\psi\geq 0$ and  $\partial_t \psi \leq 0$.

\begin{remark}\label{Remark-positiveness}
Note that the integral $\int_0^T \int_{\B\setminus \Omega_t} \frac{\vphi}{\theta}  \left( \Ss_\omega : \nabla_x \uvect     + \frac{\kappa_\nu(\theta)}{\theta} |\nabla_x \theta|^2  \right)$ is simply omitted from the entropy inequality \eqref{enropy-ineq-recover}, since  this term is  positive in  $((0,T)\times \B) \setminus Q_T$.    
\end{remark}


In the next subsections, we shall find the limiting behaviors of  the weak formulations w.r.t. the parameters $\omega, \nu, \xi$ and $\lambda$. To this end, we introduce the following scaling:
\begin{align}\label{scaling} 
\lambda = \omega^{\frac{1}{2}}   = \nu^{\frac{1}{3}} =  \xi^{\frac{1}{10}} = h \quad \text{for } h>0.
\end{align}

\subsection{Bounds of the integrals in  $((0,T)\times \B) \setminus Q_T$} \label{Section-step-1} 

In this step, we shall find suitable  bounds of the integrals in 
$((0,T)\times \B) \setminus Q_T$.

\vspace*{.1cm} 
\noindent 
$\bullet$  Recall the weak formulation \eqref{weak-momen-recover} and focus on the integrals on $\B\setminus \Omega_t$. First, we see
\begin{align}\label{limit-1}  
\left|	\int_0^T \int_{\B\setminus \Omega_t} \frac{a_\xi}{3} \theta^4 \Div_x \boldvphi \right| 
&\leq C\|\Div_x \boldvphi \|_{L^\infty((0,T)\times \B)}  \frac{\xi}{\lambda^{\frac{4}{5}}} \left( \int_0^T \int_{\B} \lambda \theta^5\right)^{\frac{4}{5}}  \notag \\
& \leq    \frac{C\xi}{\lambda^{\frac{8}{5}}} 
 = :A_1(\xi,  \lambda), 
\end{align}
thanks to the bound \eqref{uniform-bound-4}. 
Using the scaling 
\eqref{scaling} we the have 
\begin{align}\label{limit-h-1}
	A_1(\xi, \lambda) =  \frac{C h^{10}}{h^{\frac{8}{5}}} .
\end{align}

Secondly, we compute 
\begin{align}\label{limit-2}
&\left|\int_0^T \int_{\B \setminus \Omega_t} \Ss_\omega : \nabla_x \boldvphi  \right|   \notag  \\
\leq & \|\nabla_x\boldvphi\|_{L^\infty((0,T)\times \B)} \left(\int_0^T \int_{\B\setminus \Omega_t} \bigg(\frac{1}{\sqrt{\theta}} |\Ss_\omega|\bigg)^{\frac{10}{9}}\right)^{\frac{9}{10}} \left( \int_0^T \int_{\B\setminus \Omega_t} \theta^5 \right)^{\frac{1}{10}}    \notag \\
 \leq &\frac{C}{\lambda^{\frac{1}{10}}} \left( \int_0^T \int_{\B\setminus \Omega_t}\lambda \theta^5 \right)^{\frac{1}{10}} 
 \left(\int_0^T \int_{\B\setminus \Omega_t} \bigg(\frac{1}{\sqrt{\theta}} \sqrt{|\Ss_\omega: \nabla_x \uvect|}\,\sqrt{|f_\omega|(1+\theta)}\bigg)^{\frac{10}{9}}\right)^{\frac{9}{10}}  \notag \\
  \leq &\frac{C}{\lambda^{\frac{1}{10}}} \left( \int_0^T \int_{\B\setminus \Omega_t}\lambda \theta^5 \right)^{\frac{1}{10}} 
 \left(\int_0^T \int_{\B\setminus \Omega_t} \frac{1}{\theta} |\Ss_\omega : \nabla_x \uvect|  \right)^{\frac{1}{2}} \left( \int_0^T \int_{\B \setminus \Omega_t} \big(f_\omega (1+\theta)\big)^{\frac{5}{4}} \right)^{\frac{2}{5}}  \notag \\
 \leq &\frac{C}{\lambda^{\frac{1}{10}}} \left( \int_0^T \int_{\B\setminus \Omega_t}\lambda \theta^5 \right)^{\frac{1}{10}}   \left(\int_0^T \int_{\B\setminus \Omega_t} \frac{1}{\theta} |\Ss_\omega : \nabla_x \uvect|  \right)^{\frac{1}{2}}
 \notag \\
 & \qquad \times \|f_\omega\|^{\frac{1}{2}}_{L^{\frac{5}{3}}(((0,T)\times \B)\setminus Q_T)}\Bigg[ 1+ \frac{1}{\lambda^{\frac{1}{10}}} \bigg(\int_0^T \int_{\B\setminus \Omega_t} \lambda \theta^5 \bigg)^{\frac{1}{10}}   \Bigg] \notag \\
 \leq &\frac{C\sqrt{\omega}}{\lambda^{\frac{1}{10}}} \frac{1}{\lambda^{\frac{1}{10} + \frac{1}{2}}}  \left[ 1+  \frac{1}{\lambda^{\frac{1}{5}}} \right]
  \leq  \frac{C\sqrt{\omega}}{\lambda^{\frac{9}{10}}}
 =:  A_2(\omega, \lambda).
 \end{align}
Here, we have used  the  fact that (thanks to the definitions \eqref{def-mu-omega}, \eqref{def-nu-omega} and the hypothesis \eqref{hypo-mu})
\begin{align*}
	\left|\Ss_\omega\right|^2 \leq  \left|\Ss_\omega : \nabla_x \uvect\right| \left|(\mu_\omega(\theta) + \eta_\omega(\theta) )\right| \leq C \left|\Ss_\omega : \nabla_x \uvect\right| \left|f_\omega\right| (1+\theta),
\end{align*}
and the bounds \eqref{uniform-bound-4} and  \eqref{uniformbound-Stress}. Also, we have used that $\|f_\omega\|_{L^{\frac{5}{3}}((0,T)\times \B)}\leq c\, \omega$ by means of the choice  \eqref{choice-f-omega}.

By means of the scaling \eqref{scaling}, we then obtain 
\begin{align}\label{limit-h-2}
	A_2(\omega, \lambda) = \frac{C h }{h^{\frac{9}{10}}}  .
\end{align}

\noindent 
$\bullet$ We now find the following estimate (appearing in \eqref{eq-mass-species-solid}
:
\begin{align}\label{bound-zeta-Y} 
\bigg| \int_0^T \int_{\B \setminus \Omega_t} \zeta_\omega(\theta) \nabla_x Y_k \cdot \nabla_x \vphi \bigg|  
&\leq C \|\nabla_x \vphi\|_{L^\infty((0,T)\times \B   )} \bigg| \int_0^T \int_{\B\setminus \Omega_t}\zeta_\omega(\theta) \nabla_x Y_k 
\bigg| \notag  \\
&\leq C \bigg(\int_0^T \int_{\B\setminus \Omega_t} \zeta_\omega(\theta) |\nabla_x Y_k|^2 \bigg)^{\frac{1}{2}} \bigg(\int_0^T \int_{\B\setminus \Omega_t} f_\omega(1+\theta) \bigg)^{\frac{1}{2}} \notag \\
&\leq C \|f_\omega\|^{\frac{1}{2}}_{L^{\frac{5}{4}}(((0,T)\times \B) \setminus Q_T) } \left[1+ \frac{1}{\lambda^{\frac{1}{10}}} \bigg( \int_0^T \int_{\B\setminus \Omega_t} \lambda \theta^5 \bigg)^{\frac{1}{10}} \right]\notag   \\
& \leq 
{C\sqrt{\omega}} \left[1+ \frac{1}{\lambda^{\frac{1}{5}}} \right] \leq \frac{C\sqrt{\omega}}{\lambda^{\frac{1}{5}}} := A_3(\omega, \lambda) ,
\end{align} 
by using the definition of $\zeta_\omega$ given in \eqref{hypo-zeta}--\eqref{def-sigma-omega}, the assumption on $f_\omega$ from \eqref{choice-f-omega} and  the bound \eqref{regular-bound-Y}. 

 The scaling \eqref{scaling} then helps to deduce 
\begin{align}\label{limit-h-zeta}
A_3(\omega, \lambda) = \frac{C h}{h^{\frac{1}{5}}} . 
\end{align}

\noindent 
$\bullet$ Next, we look to the entropy inequality \eqref{enropy-ineq-recover}. We observe that
\begin{align}\label{limit-3}
&\left|	\int_0^T \int_{\B \setminus \Omega_t} \frac{4a_\xi}{3}\theta^3\left( \partial_t \vphi +  \uvect\cdot \nabla_x \vphi \right) \right| \notag \\
\leq & \frac{C\xi}{\lambda^{\frac{3}{5}}} \|\partial_t \vphi \|_{L^\infty((0,T)\times \B)} \bigg(\int_0^T \int_{\B \setminus \Omega_t} \lambda \theta^5 \bigg)^{\frac{3}{5}} + C\|\nabla_x \vphi\|_{L^\infty((0,T)\times \B)} \bigg|\int_0^T \int_{\B\setminus \Omega_t} a \xi \theta^3 \uvect  \, \bigg| \notag \\
\leq & \frac{C\xi}{\lambda^{\frac{3}{5}}} \bigg(\int_0^T \int_{\B \setminus \Omega_t} \lambda \theta^5 \bigg)^{\frac{3}{5}}  
+ C \xi^{\frac{1}{4}}\int_0^T \bigg(\int_{\B\setminus \Omega_t} a\xi \theta^4 \bigg)^{\frac{3}{4}} \bigg(  \int_{\B\setminus \Omega_t} |\uvect|^{4} \bigg)^{\frac{1}{4}} \notag \\
\leq & \frac{C\xi}{\lambda^{\frac{3}{5}}} \times \frac{C}{\lambda^{\frac{3}{5}}}  
+ C\xi^{\frac{1}{4}} \|a_\xi \theta^4\|^{\frac{3}{4}}_{L^\infty(0,T; L^1(\B))} \| \uvect\|_{L^2(0,T; L^4(\B ; \mathbb R^3)) } \notag \\
\leq & \frac{C\xi}{\lambda^{\frac{6}{5}}} 
+ \frac{C\xi^{\frac{1}{4}}}{\lambda^{\frac{3}{4}}}\times \frac{C}{\sqrt{\omega} \lambda^{\frac{1}{2}} }
\notag \\
\leq & \frac{C\xi}{\lambda^{\frac{6}{5}}} + \frac{C \xi^{\frac{1}{4}}}{\sqrt{\omega} \lambda^{\frac{5}{4}} } 
 =:  A_4(\xi, \omega, \lambda)  ,
\end{align}  
where we have used the bounds \eqref{uniform-bound-4},   \eqref{uniform-bound-5} and \eqref{bound-theta-L4}.

Now, using the fact \eqref{scaling}, we have 
\begin{equation}\label{limit-h-3}
\begin{aligned}
	A_4(\xi, \omega, \lambda) 
 = \frac{Ch^{10}}{h^{\frac{6}{5}}} + \frac{C h^{\frac{10}{4}}}{  h^{\frac{9}{4}}  }. 
\end{aligned}
\end{equation}

\noindent 
$\bullet$ The following term in the entropy inequality \eqref{enropy-ineq-recover}  can be estimated like:
\begin{align}\label{limit-4}
&	\left|\int_0^T \int_{\B \setminus \Omega_t} \frac{\kappa_\nu(\theta,t,x)}{\theta} \nabla_x \theta \cdot \nabla_x \vphi \right| \notag \\
	\leq &\bigg(\int_0^T \int_{\B \setminus \Omega_t} \frac{\kappa_\nu(\theta,t,x)}{\theta^2} |\nabla_x \theta|^2\bigg)^{\frac{1}{2}} \bigg(\int_0^T \int_{\B \setminus \Omega_t} \kappa_\nu(\theta,t,x) |\nabla_x \vphi|^2 \bigg)^{\frac{1}{2}} \notag \\
 \leq &  \frac{C}{\lambda^{\frac{1}{2}}} \|\nabla_x \vphi\|_{L^\infty((0,T)\times \B)} 
 \sqrt{\nu} \bigg(\int_0^T \int_{\B\setminus \Omega_t}  (1+ \theta + \theta^3)  \bigg)^{\frac{1}{2}} \notag \\
  \leq & \frac{C \sqrt{\nu}}{\lambda^{\frac{1}{2}}} 
\left[1+ \frac{1}{\lambda^{\frac{1}{10}}} \bigg(\int_0^T \int_{\B\setminus \Omega_t} \lambda \theta^5 \bigg)^{\frac{1}{10}} 
+ \frac{1}{\lambda^{\frac{3}{10}}} \bigg(\int_0^T \int_{\B\setminus \Omega_t} \lambda \theta^5 \bigg)^{\frac{3}{10}} 
\right] \notag \\
 \leq & \frac{C \sqrt{\nu}}{\lambda^{\frac{1}{2}}}  \left[  1+ \frac{1}{\lambda^{\frac{1}{5}}}    + \frac{1}{\lambda^{\frac{3}{5}}}    \right] \notag \\
  \leq & \frac{C \sqrt{\nu}}{\lambda^{\frac{11}{10} } }
   = : A_5(\nu,  \lambda) ,
\end{align}
where we have used the bounds \eqref{uniform-bound-4} and \eqref{esti-temp-sub}.
We further compute (thanks to the scaling \eqref{scaling})
\begin{align}\label{limit-h-4}
A_5(\nu,  \lambda) = 
\frac{C h^{\frac{3}{2}} }{ h^{\frac{11}{10}}  }.
\end{align}

\subsection{Finding a suitable estimate for the term $\lambda \theta^4$ in the entropy balance}

 In this step, we shall find a suitable bound of the term $\disp \int_0^T \int_{\B} \lambda \theta^4 \vphi$ appearing in the entropy inequality 
\eqref{enropy-ineq-recover}, so that as $\lambda\to 0$ (in terms of the scaling), this term vanishes. 

 
 
 

  To do that, let us first write the following modified  energy inequality from \eqref{energy-pena-3-limit-veps} (having in hand that $\rho =0$ in $\B\setminus \Omega_t$ for each $t\in [0,T]$),  
  %
%
\begin{align}\label{energy-pena-recover-2}
 		& \int_{\Omega_\tau}  \left(\frac{1}{2} \rho |\uvect|^2  + \rho( e_M +  e_{R}) - \rho (s_M +s_{R})  + \frac{\delta \rho^\beta}{\beta-1}\right)(\tau, \cdot) 
+  \int_0^\tau \int_\B \lambda \theta^5  \notag  \\
 	&	
   + \int_0^\tau  \int_{\Omega_t} \frac{1}{\theta}    \left( \Ss : \nabla_x \uvect  + \frac{\kappa(\theta)|\nabla_x\theta|^2  }{\theta} - \sum_{k=1}^n \rho h_k \sigma_k \right) +  \int_0^\tau  \int_{\B\setminus\Omega_t} a_\xi \left( \theta^4 - \frac{4}{3}\theta^3  \right)   \notag \\
 \leq &	
 - \int_0^\tau   \int_{\Omega_t}  \big(\rho[\uvect \otimes \uvect]    : \nabla_x \V  - \Ss_\omega : \nabla_x \V  + p_{\xi, \delta}(\rho,\theta)\, \Div_x \V  + \rho \uvect \cdot \partial_t \V   \big) \notag \\
& + \int_0^\tau   \int_{\B\setminus \Omega_t} \Ss_\omega : \nabla_x \V  -  \int_0^\tau  \int_{\B\setminus \Omega_t} \frac{a_\xi}{3} \theta^4 \Div_x \V   
\notag \\
 		&  
   + \int_{\Omega_\tau} (\rho \uvect \cdot \V )(\tau, \cdot) 
   - \int_{\Omega_0} (\rho \uvect)_{0,\delta} \cdot \V(0, \cdot)    - \int_0^\tau \int_{\Omega_t} \sum_{k=1}^n \rho h_k \sigma_k  + \int_0^\tau  \int_\B \lambda \theta^4   
 		\notag \\ 
 		& +  \int_{\Omega_0} 	  \left(\frac{1}{2} \frac{|(\rho \uvect)_{0,\delta}|^2}{\rho_{0,\delta}}  + \rho_{0,\delta} (e_M+ e_{R, \xi})(\rho_{0,\delta},\theta_{0,\delta}) - \rho_{0,\delta} (s_M+ s_{R, \xi})(\rho_{0,\delta},\theta_{0,\delta})   + \frac{\delta \rho_{0,\delta}^\beta}{\beta-1}     \right) \notag \\
   & +  \int_{\B\setminus \Omega_0}  a_\xi \left(\theta^4_{0,\delta} -  \frac{4}{3}\theta^3_{0,\delta}  \right)
 	\end{align}
for a.a.  $\tau\in [0,T]$.

\vspace*{.1cm}

\noindent 
$\bullet$ First, by performing a similar analysis as in Section \ref{Section-step-1}, we can bound the following terms appearing in the modified energy inequality \eqref{energy-pena-recover-2}, namely, 
\begin{align} \label{limit-5}
\left|	\int_0^T \int_{\B\setminus\Omega_t} a_\xi\left(\theta^4 - \frac{4}{3}\theta^3  \right)
\right| 
+ \left| 	\int_0^T \int_{\B \setminus \Omega_t} \frac{a_\xi}{3} \theta^4 \Div_x \V \right| \leq C \left(A_1(\xi,  \lambda) + A_4(\xi, \omega, \lambda)\right)   
\end{align}
and 
\begin{align}\label{limit-6}
	\left| \int_0^T \int_{\B \setminus \Omega_t} \Ss_\omega : \nabla_x \V    \right| 
	 \leq C A_2( \omega, \lambda) .
\end{align}

\vspace*{.1cm}

\noindent 
$\bullet$ Now, the estimations of  the terms  $\disp \int_{\Omega_t} (\rho \uvect \cdot \V)(\tau, \cdot)$, $\disp \int_0^\tau \int_{\Omega_t} \rho [\uvect \otimes \uvect]: \nabla_x \V$, $\disp \int_0^\tau \int_{\Omega_t} \rho \uvect  \cdot \partial_t \V$,   $\disp \int_0^\tau \int_\B \lambda \theta^4$,  $\disp \int_0^\tau \int_{\Omega_t}\sum_{k=1}^n \frac{1}{\theta}\rho h_k \sigma_k$ and  $\disp \int_0^\tau \int_{\Omega_t}\sum_{k=1}^n \rho h_k \sigma_k$ 
are same as previous; see \eqref{esti-1}, \eqref{esti-3}, \eqref{esti-4}, \eqref{esti-5}, \eqref{rho-h_k-sigma_k} and \eqref{rho-h-sig-thet} respectively (since all of those estimates are uniform w.r.t.  $\lambda$).

\vspace*{.1cm}

\noindent 
$\bullet$   We also recall that all the terms in the fluid domain $Q_T$ are independent of the parameters $\omega, \xi, \nu$. In
what follows, we compute that
\begin{align}\label{esti-final-1} 
	\int_0^\tau \int_{\Omega_t} \Ss : \nabla_x \V 
	&\leq   \frac{1}{2} \int_0^\tau \int_{\Omega_t} \frac{1}{\theta} \Ss : \nabla_x \uvect + C(\V) \int_0^\tau \int_{\Omega_t} \theta \notag  \\
	& \leq \frac{1}{2} \int_0^\tau \int_{\Omega_t}  \frac{1}{\theta} \Ss : \nabla_x \uvect + 
		C(\V,a) \bigg(1+\int_0^\tau \int_{\Omega_t} a \theta^4  \bigg)  \notag \\   
	& \leq \frac{1}{2} \int_0^\tau \int_{\Omega_t}  \frac{1}{\theta} \Ss : \nabla_x \uvect + 
		C(\V,a) \int_0^\tau \int_{\Omega_t} \rho e (\rho, \theta) + C(\V,a) 
\end{align}
since  $a \theta^4  \leq \rho e(\rho, \theta)$.

Note that the term
$\disp \frac{1}{2} \int_0^\tau \int_{\Omega_t}  \frac{1}{\theta} \Ss: \nabla_x \uvect$ 
can be absorbed by the associated term in the left-hand side of \eqref{energy-pena-recover-2}. 



\vspace*{.1cm}

\noindent 
$\bullet$ Let us recall that the pressure term in $Q_T$ reads as 
$\disp p_\delta(\rho, \theta) = p_{M}(\rho, \theta) + \frac{a}{3}\theta^4 + \delta \rho^\beta$. Then by means of the  bound \eqref{mole_bound_p_M},   we get
	\begin{align}\label{estimate-pressue-final}
		\left|\int_0^\tau \int_{\Omega_t}  p_{\delta} (\rho, \theta) \Div_x \V \right| 
	&	\leq C(\V) \int_0^\tau \int_{\Omega_t} \left(\frac{\delta}{\beta-1} \rho^\beta + a \theta^4 + \rho^{\frac{5}{3}} + \theta^{\frac{5}{2}} \right) \notag \\
	& \leq C(\V, p_\infty, a) \int_0^\tau\int_{\Omega_t} \left(\frac{\delta}{\beta-1} \rho^\beta + \rho e(\rho, \theta) \right).
	\end{align}

\vspace*{.1cm}
\noindent
$\bullet$ We also deduce that 
\begin{align}\label{esti-initial}
	\left| \int_{\B\setminus \Omega_0} a_\xi \Big(\theta^4_{0,\delta} - \frac{4}{3} \theta^3_{0,\delta} \Big) \right| \leq C(\theta_0) \xi .
	\end{align}

\vspace*{.2cm}

Using all the above estimates and the bounds of the terms in  $((0,T)\times\B)\setminus Q_T$ given by \eqref{limit-1}, \eqref{limit-2}, \eqref{bound-zeta-Y}, 
\eqref{limit-3}, 
\eqref{limit-4} 
\eqref{limit-5} and \eqref{limit-6}, we have from 
\eqref{energy-pena-recover-2} (also by utilizing the Gr\"onwall's lemma)
\begin{align}\label{energy-pena-recover-3}
 		& \int_{\Omega_\tau}  \left(\frac{1}{2} \rho |\uvect|^2  + \rho( e_M +  e_{R}) - \rho (s_M +s_{R})  + \frac{\delta \rho^\beta}{\beta-1}\right)(\tau, \cdot) 
+  \int_0^\tau \int_\B \lambda \theta^5  \notag \\
 	&	
   + \int_0^\tau  \int_{\Omega_t} \frac{1}{\theta}    \left( \Ss : \nabla_x \uvect  + \frac{\kappa(\theta)|\nabla_x\theta|^2  }{\theta} \right) \notag   \\
 & \leq 
	  C	\int_{\Omega_0} 	  \bigg(\frac{1}{2} \frac{|(\rho \uvect)_{0,\delta}|^2}{\rho_{0,\delta}}  + \rho_{0,\delta} (e_M+ e_{R, \xi})(\rho_{0,\delta},\theta_{0,\delta}) - \rho_{0,\delta} (s_M+ s_{R, \xi})(\rho_{0,\delta},\theta_{0,\delta}) \notag  \\
 & \qquad \qquad + \frac{\delta \rho_{0,\delta}^\beta}{\beta-1}      - (\rho \uvect)_{0,\delta} \V(0) +1 \bigg) \notag \\ 
	& \ + 
 C \xi + C \left( A_1(\xi,  \lambda)  +  A_2(\omega, \lambda) +  A_3(\omega, \lambda) +  A_4(\xi, \omega, \lambda) +A_5(\nu, \lambda) \right) 
\end{align}
for a.a.  $\tau \in [0,T]$.

\vspace*{.2cm}
\paragraph{\bf Bounds of the term $\lambda \theta^4$.} 
 Let us recall the entropy balance 
 \eqref{enropy-ineq-recover}. The only term left to estimate is the integral concerning $\lambda \theta^4$. In fact, thanks to \eqref{energy-pena-recover-3}, we have 
\begin{align}\label{bound-lambda-theta-4}
	\left|\int_0^T \int_\B \lambda \theta^4 \vphi \right| 
	&\leq  \lambda^{\frac{3}{5}} \|\vphi\|_{L^\infty((0,T)\times \B)}\bigg(\int_0^T \int_\B \lambda \theta^5 \bigg)^{\frac{4}{5}} \notag  \\
	&\leq C \lambda^{\frac{3}{5}} \Big(1+ \xi +  A_1(\xi,  \lambda)  +  A_2(\omega, \lambda) +  A_3(\omega, \lambda) +  A_4(\xi, \omega, \lambda) + A_5(\nu, \lambda) \Big)^{\frac{4}{5}},
\end{align} 
where the above constant $C$ does not depend on any of the parameters 
$\lambda$, $\xi$, $\omega$, $\nu$ or $\delta.$

\subsection{Passing to the limits of $\omega,\xi, \nu, \lambda$}\label{section-limit-pass}

Now, we are in position to pass to the limits of all the parameters $\omega$, $\xi$, $\nu$, $\lambda$ together.  
We keep in mind the scaling introduced in \eqref{scaling} w.r.t. $h$ and  recall the estimates  \eqref{limit-1}--\eqref{limit-h-1},  \eqref{limit-2}--\eqref{limit-h-2},  \eqref{bound-zeta-Y}--\eqref{limit-h-zeta},   \eqref{limit-3}--\eqref{limit-h-3} and \eqref{limit-4}--\eqref{limit-h-4}, from which it is not difficult to observe that 
\begin{align}\label{limits-all}
\begin{dcases}
		A_1(\xi, \lambda) =  \frac{C h^{10}}{h^{\frac{8}{5}}} 
 \to 0 \ \text{ as } h\to 0, \\
	A_2(\omega, \lambda) = \frac{C h}{h^{\frac{9}{10}}} 
 \to 0 \ \text{ as } h\to 0, \\
	A_3( \omega, \lambda) = \frac{Ch}{h^{\frac{1}{5}}} \to 0  \  \text{ as } h\to 0, 
	\\
A_4( \xi, \omega, \lambda) = \frac{Ch^{10}}{h^{\frac{6}{5}}} + \frac{Ch^{\frac{10}{4}}}{h^{\frac{9}{4}}} 
	\to 0 \  \text{ as } h \to 0, \text{ and}
	\\
A_5(\nu ,\lambda) = \frac{C h^{\frac{3}{2}}}{h^{\frac{11}{10}}} 
\to 0 \ \  \text{ as } h\to 0.
\end{dcases}
\end{align}

Next, by means of the estimate 
\eqref{limit-1}--\eqref{limit-h-1}, one can deduce 
\begin{align*}
&\int_0^T \partial_t \psi \int_{\B\setminus\Omega_t} a_\xi \theta^4 , \ \ \int_0^T \psi \int_{\B\setminus \Omega_t}  \frac{a_\xi}{3}\theta^4 \Div_x \V   \to 0 \ \ \text{ as } h \to 0, 
\end{align*}
and analogously, in terms of \eqref{limit-6},  one can say   
\begin{align*}
\int_0^T \int_{\B \setminus \Omega_t} \Ss_\omega : \nabla_x \V \psi  \to 0  \ \ \text{ as } h \to 0, 
\end{align*}
in the energy inequality \eqref{limit-energy-pena-recover}.

Furthermore, one can easily observe that  
\begin{align*}
\psi(0) \int_{\B \setminus \Omega_0} a_\xi \theta^4_{0,\delta} \ \text{ and } \
\int_{\B\setminus \Omega_0} \frac{4a_\xi}{3} \theta^3_{0,\delta} \vphi(0, \cdot) \to 0 \ \ \text{ as } \xi \to 0,
\end{align*}
respectively in the energy inequality \eqref{limit-energy-pena-recover} and  entropy inequality \eqref{enropy-ineq-recover}.

Now, thanks to the estimate \eqref{bound-lambda-theta-4}, and the  limiting behaviors of $A_1, A_2, A_3, A_4$ and $A_5$ from \eqref{limits-all},
we have 
\begin{align}\label{limit-h-5}
\int_0^T \int_\B \lambda \theta^{4} \vphi \to 0 \quad \text{as } h\to 0 ,
\end{align}
in the entropy inequality  \eqref{enropy-ineq-recover}.

Finally, the term  
$\displaystyle \int_0^T \int_{\B} \lambda\theta^{5}  \psi$ in the energy inequality \eqref{limit-energy-pena-recover} can be omitted since it is non-negative.

\subsection{Weak formulations after passing to the limits}

Let us write the weak formulations 
after passing to the limit of $\xi$, $\omega$, $\nu$, $\lambda$ to $0$ (using the limiting behavior discussed in Section \ref{section-limit-pass}).

\vspace*{.1cm}
\noindent 
(i)  The  weak formulation of the continuity equation will be the same as \eqref{weak-conti-recover} after passing to the limits.

\vspace*{.1cm}

\noindent
(ii) The weak formulation of the momentum equation becomes (from \eqref{weak-momen-recover})
	\begin{align}\label{weak-momen-recover-new}
		-	\int_0^T \int_{\Omega_t} \big( \rho \uvect \cdot \partial_t  \boldvphi   +  \rho[\uvect \otimes \uvect] : \nabla_x  \boldvphi + p_{\delta}(\rho, \theta) \Div_x \boldvphi   \big)   
		+\int_0^T \int_{\Omega_t} \Ss : \nabla_x \boldvphi 
	\notag  \\
		=
		\int_{\Omega_0}   (\rho \uvect)_{0,\delta}\cdot  \boldvphi(0, \cdot)
	\end{align}  
for any test function $\boldvphi$  as in \eqref{test-func}.

 Next, by arguing as  \cite[Section 4.3.1]{Sarka-Kreml-Neustupa-Feireisl-Stebel}, we can show that the momentum equation \eqref{weak-momen-recover-new}, in fact, holds for any test function   
\begin{align*}
	\boldvphi \in \C^\infty_c([0,T)\times \B;\mathbb R^3) \quad \text{with} \quad \boldvphi(\tau, \cdot) \cdot \mathbf n \big|_{\Gamma_\tau} =0 \ \ \text{for any } \tau\in [0,T].
\end{align*}

\vspace*{.2cm}
\noindent 
(iii)     After passing to the limit in 
\eqref{eq-mass-species-solid},    the  weak formulation for the species mass fractions reads as  
\begin{align}\label{eq-mass-species-solid-new}
&-\int_0^T \int_{\Omega_t} \big[\rho Y_k \partial_t \vphi + \rho Y_k \uvect \cdot \nabla_x \vphi  - \zeta(\theta) \nabla_x Y_k \cdot \nabla_x \vphi   \big] 
   \notag \\
& \qquad \qquad \qquad =		\int_0^T \int_{\Omega_t}  \rho \sigma_k \vphi + \int_{\Omega_0} \rho_{0,\delta} Y_{k,0, \delta} \vphi(0,\cdot)  \quad \text{ for  } k=1, ..., n  ,
	\end{align}
to be satisfied for any test function 
 $\vphi \in \C^1([0,T)\times \B   ;\mathbb R)$,  $\vphi\geq 0$,  together with  
	\begin{align}\label{eq-mass-specs-solid-new-2} 
		-\int_0^T \partial_t  \psi \int_{\Omega_t}  \rho G(\Yvect) + & \int_0^T \psi \int_{\Omega_t} \sum_{k=1}^n\zeta(\theta) G_0 |\nabla_x Y_k|^2 
		\notag \\
		& \qquad 
		\leq  
		\int_0^T \psi \int_{\Omega_t}  \sum_{k=1}^n \rho \frac{\partial G(\Yvect)}{\partial Y_k} \sigma_k + \int_{\Omega_0} \rho_{0,\delta} G(\Yvect_{0,\delta}) \psi(0) 
	\end{align}
for any $\psi\in \C^1_c([0,T))$ with $\psi\geq 0$, $\partial_t\psi\leq 0$ and any convex function $G\in \C^2(\mathbb R^n;\mathbb R)$ verifying  \eqref{condition-ellipic-G}.

Since we have $\rho=0$ outside $\B\setminus \Omega_t$ for each $t\in [0,T]$,  the same holds true  for the species mass density $\rho_k$ due to the relation  \eqref{rho-1} for each $k=1,...,n$. As a consequence, the principle of mass conservation: $\sum_{k=1}^n Y_k=1$ remains valid in $Q_T$.   


\vspace*{.2cm}
\noindent 
(iv) The entropy inequality 
\eqref{enropy-ineq-recover} now follows:
\begin{align}\label{enropy-ineq-recover-new}
&-\int_0^T \int_{\Omega_t} \rho s(\rho,\theta, \Yvect) \left(\partial_t \vphi + 
		\uvect \cdot \nabla_x \vphi \right) 	
		+ \int_0^T \int_{\Omega_t} \frac{\kappa(\theta, t,x)}{\theta} \nabla_x \theta \cdot \nabla_x \vphi 
	 - \int_{\Omega_0} \rho_{0,\delta} s(\rho_{0,\delta}, \theta_{0,\delta}) \vphi(0,\cdot) \notag \\   
	& \qquad 	\qquad  \geq \int_0^T \int_{\Omega_t} \frac{\vphi}{\theta}  \left( \Ss: \nabla_x \uvect 
    + \frac{\kappa(\theta, t,x)}{\theta} |\nabla_x \theta|^2  -\sum_{k=1}^n \rho(h_k - \theta s_k) \sigma_k\right) 
	\end{align}
for any test function  $\vphi \in \C^1_c([0,T)\times \B)$ with $\vphi\geq 0$.

\vspace*{.2cm}
\noindent 
(v)  On the other hand, the energy inequality \eqref{limit-energy-pena-recover} is reduced to
\begin{align}\label{limit-energy-pena-recover-new}
&	-	\int_0^T \partial_t \psi \int_{\Omega_t}  \left(\frac{1}{2} \rho |\uvect|^2  + \rho e_M(\rho,\theta)  + \rho e_{R}(\rho, \theta)  + \frac{\delta}{\beta-1} \rho^\beta \right) \notag 
		  \\
	 \leq &	-\int_0^T  \psi \int_{\Omega_t}  \big(\rho[\uvect \otimes \uvect]    : \nabla_x \V  - \Ss : \nabla_x \V  + p_{\delta}(\rho,\theta)\, \Div_x \V  \big) 
		+ \int_0^T \int_{\Omega_t} \partial_t(\rho \uvect) \cdot \V  \psi  
		\notag \\ 
 &    -\int_0^T \psi \int_{\Omega_t} \sum_{k=1}^n \rho h_k \sigma_k +\psi(0) \int_{\Omega_0} 	  \left(\frac{1}{2} \frac{|(\rho \uvect)_{0,\delta}|^2}{\rho_{0,\delta}}  + \rho_{0,\delta} \left(e_M + e_{R}\right)(\rho_{0,\delta},\theta_{0,\delta})  + \frac{\delta}{\beta-1} \rho_{0,\delta}^\beta \right) 
	\end{align}
for any $\psi \in \C^1_c([0,T))$ such that $\psi\geq 0$ and $\partial_t \psi \leq 0$.


\section{Conclusion of the proof: vanishing artificial pressure}

To conclude, we need to pass to the limit $\delta\to 0$ to get rid of the artificial pressure term $\delta \rho^\beta$
and to obtain the weak formulations as stated in Section \ref{Section-weak-target}. 
 This can be done by following exactly the same  approach as developed in \cite[Section 6]{Eduard-CPAA-2008}. We also refer \cite[Section 4.4]{sarka-aneta-al-JMPA} for a sketch of the proof.  
 
 
This concludes the proof of 
\Cref{Main-theorem}.

\appendix

\section{Poincar\'{e} type inequalities} 

\begin{lemma}[Generalized Poincar\'e inequality]\label{Poincare}
	Let $1\leq q \leq \infty$, $0<\gamma<\infty$, $U_0>0$ and $\Omega\subset \mathbb R^N$ be a bounded Lipschitz domain. 

Then, there exists a positive constant $C=C(q, \gamma, U_0)$ such that 
\begin{align}
	\|v \|_{W^{1,q}(\Omega)} \leq C \left[\| \nabla_x v\|_{L^q(\Omega; \mathbb R^N)} + \Big( \int_U |v|^\gamma \Big)^{\frac{1}{\gamma}}  \right] 
\end{align}
for any measurable $U\subset \Omega$, $|U|\geq U_0$ and any $v\in W^{1,q}(\Omega)$. 
\end{lemma}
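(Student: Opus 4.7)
\textbf{Proof plan for Lemma \ref{Poincare}.}
The plan is to argue by contradiction via a compactness argument, in the spirit of the standard proof of the classical Poincar\'e inequality with mean value. Suppose the stated bound fails. Then, for every $n\in\mathbb{N}$, there exist $v_n\in W^{1,q}(\Omega)$ and a measurable set $U_n\subset\Omega$ with $|U_n|\geq U_0$ such that
\begin{equation*}
\|v_n\|_{W^{1,q}(\Omega)} > n\left[\|\nabla_x v_n\|_{L^q(\Omega;\mathbb{R}^N)} + \Big(\int_{U_n} |v_n|^\gamma\Big)^{1/\gamma}\right].
\end{equation*}
After normalizing $w_n := v_n/\|v_n\|_{W^{1,q}(\Omega)}$, I obtain a sequence with $\|w_n\|_{W^{1,q}(\Omega)}=1$ and
\begin{equation*}
\|\nabla_x w_n\|_{L^q(\Omega;\mathbb{R}^N)} + \Big(\int_{U_n} |w_n|^\gamma\Big)^{1/\gamma} < \frac{1}{n} \longrightarrow 0.
\end{equation*}

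Next, by the Rellich--Kondrachov compactness theorem applied on the bounded Lipschitz domain $\Omega$ (or, for $q=\infty$, by the Arzel\`a--Ascoli theorem, since $\{w_n\}$ is uniformly Lipschitz), I extract a subsequence (not relabeled) such that $w_n\to w$ strongly in $L^q(\Omega)$ and also strongly in $L^{\max(q,\gamma)}(\Omega)$, together with $\nabla_x w_n\to 0$ strongly in $L^q(\Omega;\mathbb{R}^N)$. Passing to the distributional limit yields $\nabla_x w=0$ in $\mathcal{D}'(\Omega)$; since $\Omega$ is a connected domain, $w$ is a constant. Moreover, the strong $L^q$ convergence combined with $\|\nabla_x w_n\|_{L^q}\to 0$ and $\|w_n\|_{W^{1,q}(\Omega)}=1$ forces $\|w\|_{L^q(\Omega)}>0$, so that $w\equiv c$ for some $c\neq 0$.

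Simultaneously, the indicator functions $\chi_{U_n}$ form a bounded sequence in $L^\infty(\Omega)$ with $\int_\Omega \chi_{U_n} = |U_n|\geq U_0$, hence (up to a further subsequence) $\chi_{U_n}\rightharpoonup^*\chi$ in $L^\infty(\Omega)$ with $0\leq\chi\leq 1$ and $\int_\Omega \chi \geq U_0>0$. Combining the weak-$*$ convergence of $\chi_{U_n}$ with the strong convergence of $|w_n|^\gamma\to |c|^\gamma$ in $L^1(\Omega)$ gives
\begin{equation*}
\int_{U_n}|w_n|^\gamma = \int_\Omega \chi_{U_n}\,|w_n|^\gamma \longrightarrow \int_\Omega \chi\,|c|^\gamma \geq |c|^\gamma U_0 > 0,
\end{equation*}
which contradicts $\int_{U_n}|w_n|^\gamma\to 0$. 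This contradiction proves the lemma.

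The only mildly delicate point is the joint passage to the limit in the product $\chi_{U_n}\,|w_n|^\gamma$: it relies on pairing a weakly-$*$ convergent factor with a strongly convergent one in the appropriate duality, which is licit thanks to Rellich--Kondrachov compactness (or Arzel\`a--Ascoli when $q=\infty$). The connectedness of the domain $\Omega$ is used exactly once, to conclude that $\nabla_x w=0$ implies $w$ is globally constant, thereby allowing the bound $\int_\Omega \chi|c|^\gamma \geq |c|^\gamma U_0$ regardless of where the mass of $\chi$ is distributed.
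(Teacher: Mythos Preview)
The paper does not actually prove this lemma; it only cites \cite[Theorem 11.20]{Feireisl-Novotny-book}, where the argument is indeed the standard contradiction-plus-compactness one you outline. So your overall strategy matches the referenced proof.

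There is, however, a genuine gap in your Step 3 / Step 5. You claim that Rellich--Kondrachov yields $w_n\to w$ strongly in $L^{\max(q,\gamma)}(\Omega)$, and hence $|w_n|^\gamma\to|c|^\gamma$ strongly in $L^1(\Omega)$, which you then pair with the weak-$*$ limit of $\chi_{U_n}$. But the lemma allows \emph{any} $0<\gamma<\infty$, and when $q<N$ and $\gamma\geq q^*=Nq/(N-q)$ the embedding $W^{1,q}(\Omega)\hookrightarrow L^\gamma(\Omega)$ is not compact (or even fails), so $|w_n|^\gamma$ need not converge in $L^1$ and your product argument breaks down.

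The fix is easy and bypasses the duality pairing entirely. From the strong $L^q$ convergence $w_n\to c$, pass to a subsequence converging a.e.\ and apply Egorov's theorem: for $\epsilon=U_0/2$ there is a set $E\subset\Omega$ with $|\Omega\setminus E|<U_0/2$ on which $w_n\to c$ uniformly. Then $|U_n\cap E|\geq U_0-U_0/2=U_0/2$, and for $n$ large enough $|w_n|\geq |c|/2$ on $E$, so
\[
\int_{U_n}|w_n|^\gamma \;\geq\; \int_{U_n\cap E}|w_n|^\gamma \;\geq\; \frac{U_0}{2}\Big(\frac{|c|}{2}\Big)^\gamma \;>\;0,
\]
contradicting $\int_{U_n}|w_n|^\gamma\to 0$. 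This works for every $\gamma\in(0,\infty)$ and avoids any integrability issue for $|w_n|^\gamma$ on all of $\Omega$.
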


A formal proof of the above result is given in \cite[Theorem 11.20, Chapter 11.9]{Feireisl-Novotny-book}.

\begin{lemma}[Generalized Korn-Poincar\'e inequality]\label{Korn-Poincare}
Let $\Omega \subset \mathbb R^N$, $N>2$, be a bounded Lipschitz domain. Assume that $r$ is a
non-negative function such that 
\begin{align*}
	0< m_0 \leq \int_\Omega r , \quad \int_\Omega r^\gamma \leq k,
\end{align*}
for some specific $\gamma>1$. Then 
\begin{align}
	\|\mathbf v\|_{W^{1,q}(\Omega; \mathbb R^N)} \leq C(q, m_0, k) \left(\left\| \nabla_x  \mathbf v + \nabla_x^\top \mathbf v -\frac{2}{N} \Div_x \mathbf v \mathbb I   \right\|_{L^q(\Omega; \mathbb R^{N\times N})}  + \int_\Omega r |\mathbf v| \right) 
\end{align}
for any $\mathbf v\in W^{1,q}(\Omega; \mathbb R^N)$ and $1<q<\infty$.
\end{lemma}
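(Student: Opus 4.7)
\medskip

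\noindent\textbf{Proof proposal for \Cref{Korn-Poincare}.} The plan is the classical two-step strategy: first combine a Korn-type estimate with the Rellich compact embedding to get a bound of the form $\|\mathbf v\|_{W^{1,q}} \leq C(\|\mathbf{D}_{\mathrm{dev}}\mathbf v\|_{L^q} + \|\mathbf v\|_{L^q})$, where $\mathbf{D}_{\mathrm{dev}}\mathbf v := \nabla_x \mathbf v + \nabla^\top_x\mathbf v - \frac{2}{N}\Div_x \mathbf v\,\mathbb{I}$, and then upgrade the $L^q$-norm of $\mathbf v$ on the right-hand side to the weighted quantity $\int_\Omega r|\mathbf v|$ via a standard compactness-contradiction argument that exploits the finite-dimensionality (and real-analyticity) of the kernel of $\mathbf{D}_{\mathrm{dev}}$.

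First, I would invoke the (by now classical) Korn inequality in $L^q$ for the deviatoric symmetric gradient on a bounded Lipschitz domain (see, e.g., the monograph of Feireisl and Novotn\'y referenced in the paper): there exists $C=C(q,\Omega)>0$ with
\begin{equation*}
\|\nabla_x \mathbf v\|_{L^q(\Omega;\mathbb{R}^{N\times N})} \leq C\bigl(\|\mathbf{D}_{\mathrm{dev}}\mathbf v\|_{L^q(\Omega;\mathbb{R}^{N\times N})} + \|\mathbf v\|_{L^q(\Omega;\mathbb{R}^N)}\bigr) \quad \forall\,\mathbf v \in W^{1,q}(\Omega;\mathbb{R}^N).
\end{equation*}
Adding $\|\mathbf v\|_{L^q}$ on both sides gives the preliminary estimate. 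The remaining task is to replace $\|\mathbf v\|_{L^q}$ by $\int_\Omega r|\mathbf v|$, the conclusion being uniform only through $m_0$ and $k$.

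Assume the lemma fails: then for every $n\in \mathbb{N}$ we find $\mathbf v_n\in W^{1,q}(\Omega;\mathbb{R}^N)$ and a non-negative weight $r_n$ satisfying $m_0 \leq \int_\Omega r_n$ and $\int_\Omega r_n^\gamma \leq k$, such that $\|\mathbf v_n\|_{W^{1,q}}=1$ and
\begin{equation*}
\|\mathbf{D}_{\mathrm{dev}}\mathbf v_n\|_{L^q(\Omega;\mathbb{R}^{N\times N})} + \int_\Omega r_n |\mathbf v_n| \to 0 \quad \text{as } n\to\infty.
\end{equation*}
By the preliminary Korn bound, $\{\mathbf v_n\}$ is bounded in $W^{1,q}$; by Rellich, pass to a subsequence with $\mathbf v_n\to \mathbf v$ strongly in $L^q$ and, by choosing $\gamma'=\gamma/(\gamma-1)$ suitably (and using Sobolev embedding if necessary), also in $L^{\gamma'}(\Omega;\mathbb{R}^N)$, while $\mathbf v_n \rightharpoonup \mathbf v$ in $W^{1,q}$; from $L^\gamma$-boundedness of $r_n$ we may extract $r_n \rightharpoonup r$ in $L^\gamma$ with $\int_\Omega r \geq m_0>0$ (the lower bound survives since constants test the weak convergence). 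Lower semicontinuity yields $\mathbf{D}_{\mathrm{dev}}\mathbf v =0$ and, by H\"older with the weak-strong pairing, $\int_\Omega r|\mathbf v|=0$. Moreover the Korn estimate applied to $\mathbf v_n-\mathbf v_m$ forces $\mathbf v_n \to \mathbf v$ strongly in $W^{1,q}$, so $\|\mathbf v\|_{W^{1,q}}=1$.

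The final step is the rigidity argument: a $W^{1,q}$-field $\mathbf v$ annihilated by $\mathbf{D}_{\mathrm{dev}}$ is a \emph{conformal Killing vector field}, which, for $N>2$, forms a finite-dimensional space of real-analytic (in fact polynomial of degree at most two) vector fields on $\Omega$. Since $r\geq 0$ and $\int_\Omega r|\mathbf v|=0$, the vector $\mathbf v$ must vanish a.e.\ on the set $\{r>0\}$, which has positive Lebesgue measure because $\int_\Omega r \geq m_0>0$. Real-analyticity then forces $\mathbf v\equiv 0$ on $\Omega$, contradicting $\|\mathbf v\|_{W^{1,q}}=1$.

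\emph{Main obstacle.} The delicate point is the last rigidity step: one must know (i) that the kernel of $\mathbf{D}_{\mathrm{dev}}$ on a Lipschitz domain consists of conformal Killing fields even at the $W^{1,q}$ regularity level, and (ii) that such fields vanishing on a set of positive measure are identically zero. The characterization of the kernel is crucially dependent on $N>2$ (so that the conformal Killing space is finite-dimensional and its elements are polynomials, not arbitrary holomorphic maps as in the two-dimensional case). One also has to be attentive to the compactness of the trace $\mathbf v\mapsto \int_\Omega r\,|\mathbf v|$ under varying weights $r_n$: verifying that $\int_\Omega r_n |\mathbf v_n|\to \int_\Omega r |\mathbf v|$ along the extracted subsequence requires the Hölder pairing $\|r_n\|_{L^\gamma}\|\mathbf v_n-\mathbf v\|_{L^{\gamma'}}\to 0$, which is why the upper bound $\int_\Omega r^\gamma\leq k$ with $\gamma>1$ is assumed.
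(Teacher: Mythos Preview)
The paper does not prove this lemma at all; it simply refers the reader to \cite[Theorem 11.23, Chapter 11.10]{Feireisl-Novotny-book}. Your compactness--contradiction argument (preliminary $L^q$-Korn estimate for the trace-free symmetric gradient, Rellich compactness, identification of the kernel of $\mathbf{D}_{\mathrm{dev}}$ with the finite-dimensional space of conformal Killing fields for $N>2$, and the unique-continuation step) is exactly the standard proof given in that reference, so there is nothing further to compare.
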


We refer \cite[Theorem 11.23, Chapter 11.10]{Feireisl-Novotny-book} for the proof of the above lemma.

\section*{Acknowledgments}
  
 K. Bhandari, S. Kra\v{c}mar  and {\v{S}}. Ne{\v{c}}asov{\'{a}} have been supported by the Czech-Korean project GA\v{C}R/22-08633J. Moreover,  K. Bhandari  and {\v{S}}. Ne{\v{c}}asov{\'{a}}
 have been supported by  the Praemium Academiae of {\v{S}}. Ne{\v{c}}asov{\'{a}}, and S. Kra\v{c}mar is supported by  RVO: 12000.
   M. Yang has been supported by the National Research Foundation of Korea (NRF) grant funded by the Korean Government (MSIT)  (No. 2021K2A9A1A06091213).
  Finally, the Institute of Mathematics, CAS is supported by RVO:67985840.

\bibliographystyle{plain}
\bibliography{ref_NSF-Multicomp}

\begin{thebibliography}{10}

\bibitem{Anderson1989hypersonic}
J.~D. Anderson.
\newblock {\em Hypersonic and high temperature gas dynamics}.
\newblock McGraw-Hill Book Company, New York, 1989.

\bibitem{becker1966gasdynamik}
E.~Becker.
\newblock {\em Gasdynamik}, volume~6.
\newblock Teubner, Stuttgart, 1966.

\bibitem{bhandari2023weak}
K.~Bhandari, B.~Huang, and \v{S}. Ne\v{c}asov\'{a}.
\newblock Weak solutions to the heat conducting compressible self-gravitating
  flows in time-dependent domains.
\newblock {\em Math. Models Methods Appl. Sci.}, 34(4):659--704, 2024.

\bibitem{Bogovskii}
M.~E. Bogovski\u{\i}.
\newblock Solutions of some problems of vector analysis, associated with the
  operators {${\rm div}$} and {${\rm grad}$}.
\newblock In {\em Theory of cubature formulas and the application of functional
  analysis to problems of mathematical physics}, volume No. 1, 1980 of {\em
  Proc. Sobolev Sem.}, pages 5--40, 149. Akad. Nauk SSSR Sibirsk. Otdel., Inst.
  Mat., Novosibirsk, 1980.

\bibitem{Bose}
T.~R. Bose.
\newblock {\em High temperature gas dynamics}.
\newblock {S}pringer-{V}erlag, Berlin, 2004.

\bibitem{BD}
D.~Bothe and P.-E. Druet.
\newblock {On the structure of continuum thermodynamical diffusion fluxes - A
  novel closure scheme and its relation to the Maxwell-Stefan and the
  Fick-Onsager approach}.
\newblock {\em Internat. J. Engrg. Sci.}, 184(103818), 2023.

\bibitem{bothe-kroeger-warnecke}
D.~Bothe, M.~Kroeger, and H.-J. Warnecke.
\newblock A vof-based approach for the simulation of reactive mass transfer
  from rising bubbles.
\newblock {\em Fluid Dynamics \& Materials Processing}, 7:303--316, 01 2011.

\bibitem{BP}
D.~Bothe and M.~Pierre.
\newblock The instantaneous limit for reaction-diffusion systems with a fast
  irreversible reaction.
\newblock {\em Discrete Contin. Dyn. Syst. Ser. S}, 5(1):49--59, 2012.

\bibitem{Trivisa-Acta-sinica}
G.-Q. Chen, D.~Hoff, and K.~Trivisa.
\newblock On the {N}avier-{S}tokes equations for exothermically reacting
  compressible fluids.
\newblock {\em Acta Math. Appl. Sin. Engl. Ser.}, 18(1):15--36, 2002.

\bibitem{Trivisa-ARMA}
G.-Q. Chen, D.~Hoff, and K.~Trivisa.
\newblock Global solutions to a model for exothermically reacting, compressible
  flows with large discontinuous initial data.
\newblock {\em Arch. Ration. Mech. Anal.}, 166(4):321--358, 2003.

\bibitem{Lions-Diperna}
R.~J. DiPerna and P.-L. Lions.
\newblock Ordinary differential equations, transport theory and {S}obolev
  spaces.
\newblock {\em Invent. Math.}, 98(3):511--547, 1989.

\bibitem{dixon1984computer}
G.~Dixon-Lewis.
\newblock Computer modeling of combustion reactions in flowing systems with
  transport.
\newblock {\em Combustion Chemistry}, pages 21--125, 1984.

\bibitem{Trivisa-Donatelli-ARMA}
D.~Donatelli and K.~Trivisa.
\newblock A multidimensional model for the combustion of compressible fluids.
\newblock {\em Arch. Ration. Mech. Anal.}, 185(3):379--408, 2007.

\bibitem{drummond2005numerical}
J.~P. Drummond and H.~S. Mukunda.
\newblock A numerical study of mixing enhancement in supersonic reacting flow
  fields.
\newblock In {\em Numerical Combustion: Proceedings of the Third International
  Conference on Numerical Combustion Held in Juan les Pins, Antibes, May
  23--26, 1989}, pages 36--64. Springer, 2005.

\bibitem{Ghatak-et-al}
S.~Eliezer, A.~Ghatak, and H.~Hora.
\newblock {\em An introduction to equations of states, theory and
  applications}.
\newblock Cambridge University Press, 1986.

\bibitem{Feireisl-NSF-2}
E.~Feireisl.
\newblock {\em Dynamics of viscous compressible fluids}, volume~26 of {\em
  Oxford Lecture Series in Mathematics and its Applications}.
\newblock Oxford University Press, Oxford, 2004.

\bibitem{Feireisl-NSF-1}
E.~Feireisl.
\newblock On the motion of a viscous, compressible, and heat conducting fluid.
\newblock {\em Indiana Univ. Math. J.}, 53(6):1705--1738, 2004.

\bibitem{Eduard-CPDE}
E.~Feireisl.
\newblock Stability of flows of real monoatomic gases.
\newblock {\em Commun. Partial Differential Equations}, 31:325--348, 2006.

\bibitem{Sarka-Kreml-Neustupa-Feireisl-Stebel}
E.~Feireisl, O.~Kreml, {\v{S}}.~Ne\v{c}asov\'{a}, J.~Neustupa, and J.~Stebel.
\newblock Weak solutions to the barotropic {N}avier-{S}tokes system with slip
  boundary conditions in time dependent domains.
\newblock {\em J. Differential Equations}, 254(1):125--140, 2013.

\bibitem{FEIREISL-JIRI}
E.~Feireisl, J.~Neustupa, and J.~Stebel.
\newblock Convergence of a {B}rinkman-type penalization for compressible fluid
  flows.
\newblock {\em J. Differential Equations}, 250(1):596--606, 2011.

\bibitem{Feireisl-Novtny-proceedings}
E.~Feireisl and A.~Novotn\'{y}.
\newblock On a simple model of reacting compressible flows arising in
  astrophysics.
\newblock {\em Proc. Roy. Soc. Edinburgh Section A: Mathematics},
  135(6):1169--1194, 2005.

\bibitem{Feireisl-Novotny-book}
E.~Feireisl and A.~Novotn\'{y}.
\newblock {\em Singular limits in thermodynamics of viscous fluids}.
\newblock Advances in Mathematical Fluid Mechanics. {B}irkh\"{a}user {V}erlag,
  {B}asel, 2009.

\bibitem{Feireisl2012weak}
E.~Feireisl and A.~Novotn\'{y}.
\newblock Weak-strong uniqueness property for the full
  {N}avier-{S}tokes-{F}ourier system.
\newblock {\em Arch. Ration. Mech. Anal.}, 204:683--706, 2012.

\bibitem{Feireisl-hana}
E.~Feireisl and H.~Petzeltov\'{a}.
\newblock On integrability up to the boundary of the weak solutions of the
  {N}avier-{S}tokes equations of compressible flow.
\newblock {\em Comm. Partial Differential Equations}, 25(3-4):755--767, 2000.

\bibitem{Eduard-CPAA-2008}
E.~Feireisl, H.~Petzeltov\'{a}, and K.~Trivisa.
\newblock Multicomponent reactive flows: Global-in-time existence for large
  data.
\newblock {\em Commun. Pure Appl. Anal.}, 7(5):1017--1047, 2008.

\bibitem{Galdi}
G.~P. Galdi.
\newblock {\em An introduction to the mathematical theory of the
  {N}avier-{S}tokes equations}.
\newblock Springer Monographs in Mathematics. Springer, New York, second
  edition, 2011.
\newblock Steady-state problems.

\bibitem{Gallavotti}
G.~Gallavotti.
\newblock {\em Statistical mechanics: A short treatise}.
\newblock {S}pringer-{V}erlag, {H}eidelberg, 1999.

\bibitem{Giovangigli}
V.~Giovangigli.
\newblock {\em Multicomponent flow modeling}.
\newblock Birkh\"{a}user, {B}asel, 1999.

\bibitem{HBZ-2022}
B.~Huang, {\v{S}}.~Ne{\v{c}}asov{\'a}, and L.~Zhang.
\newblock On the compressible micropolar fluids in a time-dependent domain.
\newblock {\em Annali di Matematica Pura ed Applicata (1923-)},
  201(6):2733--2795, 2022.

\bibitem{bubbles1}
G.~Juncu.
\newblock Unsteady heat and/or mass transfer from a fluid sphere in creeping
  flow.
\newblock {\em International Journal of Heat and Mass Transfer},
  44(12):2239--2246, 2001.

\bibitem{kailasanath1989numerical}
K.~Kailasanath, E.~S. Oran, and J.~P. Boris.
\newblock Numerical simulations of flames and detonations.
\newblock In {\em Numerical Combustion: Proceedings of the Third International
  Conference on Numerical Combustion Held in Juan les Pins, Antibes, May
  23--26, 1989}, pages 82--97. Springer, 1989.

\bibitem{Kalousek2023existence}
M.~Kalousek, S.~Mitra, and \v{S}. Ne\v{c}asov\'{a}.
\newblock The existence of a weak solution for a compressible multicomponent
  fluid structure interaction problem.
\newblock {\em J. Math. Pures Appl. (9)}, 184:118--189, 2024.

\bibitem{kee1987application}
R.~J. Kee, G.~H. Evans, and M.~E. Coltrin.
\newblock Application of supercomputers to model fluid transport and chemical
  kinetics in chemical vapor deposition reactors.
\newblock ACS Publications, 1987.

\bibitem{Klein2001asymptotic}
R.~Klein, N.~Botta, T.~Schneider, C.-D. Munz, S.~Roller, A.~Meister,
  L.~Hoffmann, and T.~Sonar.
\newblock Asymptotic adaptive methods for multi-scale problems in fluid
  mechanics.
\newblock {\em J. Engrg. Math.}, 39:261--343, 2001.

\bibitem{bubbles2}
A.~Koynov, J.~G. Khinast, and G.~Tryggvason.
\newblock Mass transfer and chemical reactions in bubble swarms with dynamic
  interfaces.
\newblock {\em AIChE Journal}, 51(10):2786--2800, 2005.

\bibitem{KMNPW-L}
O.~Kreml, V.~M{\' {a}}cha, {\v{S}}.~Ne{\v{c}}asov{\'{a}}, T.~Piasecki, and
  A.~Wr{\'{o}}blewska-Kami{\'{n}}ska.
\newblock {\em Mathematical theory of compressible fluids on moving domains}.
\newblock Advances in Mathematical Fluid Mechanics, Lecture Notes in
  Mathematical Fluid Mechanics. Springer (Submitted).

\bibitem{Sarka-et-al-ZAMP}
O.~Kreml, V.~M\'{a}cha, {\v{S}}.~Ne\v{c}asov\'{a}, and
  A.~Wr\'{o}blewska-Kami\'{n}ska.
\newblock Flow of heat conducting fluid in a time-dependent domain.
\newblock {\em Z. Angew. Math. Phys.}, 69(5):Paper No. 119, 27, 2018.

\bibitem{sarka-aneta-al-JMPA}
O.~Kreml, V.~M\'{a}cha, {\v{S}}.~Ne\v{c}asov\'{a}, and
  A.~Wr\'{o}blewska-Kami\'{n}ska.
\newblock Weak solutions to the full {N}avier-{S}tokes-{F}ourier system with
  slip boundary conditions in time dependent domains.
\newblock {\em J. Math. Pures Appl. (9)}, 109:67--92, 2018.

\bibitem{KNP-20-springer}
O.~Kreml, {\v{S}}.~Ne{\v{c}}asov{\'a}, and T.~Piasecki.
\newblock Compressible {N}avier-{S}tokes system on a moving domain in the
  {$L_p$-$L_q$} framework.
\newblock In {\em Waves in Flows: The 2018 Prague-Sum Workshop Lectures}, pages
  127--158. Springer, 2020.

\bibitem{KNP-20}
O.~Kreml, {\v{S}}.~Ne{\v{c}}asov{\'a}, and T.~Piasecki.
\newblock Local existence of strong solutions and weak--strong uniqueness for
  the compressible {N}avier-{S}tokes system on moving domains.
\newblock {\em Proc. Royal Soc. Edinburgh Section A: Mathematics},
  150(5):2255--2300, 2020.

\bibitem{Macha-Muha-Necasova-Roy-Srjdan}
V.~M\'{a}cha, B.~Muha, {\v{S}}.~Ne\v{c}asov\'{a}, A.~Roy, and
  S.~Trifunovi\'{c}.
\newblock Existence of a weak solution to a nonlinear fluid-structure
  interaction problem with heat exchange.
\newblock {\em Comm. Partial Differential Equations}, 47(8):1591--1635, 2022.

\bibitem{MNS}
P.~Mucha, \v{S} Ne\v{c}asov\'{a}, and M.~Szlenk.
\newblock { A multifluid model with chemically reacting components -
  Construction of weak solutions}.
\newblock {\em Nonlinear Anal. Real World Appl.}, 80, 2024.

\bibitem{MPZ_SIMA}
P.~B. Mucha, M.~Pokorn\'y, and E.~Zatorska.
\newblock Heat-conducting, compressible mixtures with multicomponent diffusion:
  Construction of a weak solution.
\newblock {\em SIAM J. Math. Anal.}, 47:3747--3797, 2015.

\bibitem{degenerate-parabolic}
P.~B. Mucha, M.~Pokorný, and E.~Zatorska.
\newblock Chemically reacting mixtures in terms of degenerated parabolic
  setting.
\newblock {\em Journal of Mathematical Physics}, 54(7), 07 2013.

\bibitem{Muller2013rational}
I.~M{\"{u}}ller and T.~Ruggeri.
\newblock {\em Rational {E}xtended {T}hermodynamics}, volume~37.
\newblock Springer {S}cience \& {B}usiness {M}edia, {S}pringer, {H}eidelburg,
  2013.

\bibitem{Oxenius}
J.~Oxenius.
\newblock {\em Kinetic theory of particles and photons}.
\newblock Springer-{V}erlag, {B}erlin, 1986.

\bibitem{Pedregal1997parametrized}
P.~Pedregal.
\newblock {\em Parametrized measures and variational principles}.
\newblock Birkh{\"{a}}user, {B}asel, 1997.

\bibitem{Pr}
I.~Prigogine.
\newblock {\em Thermodynamics of Irreversible Processes}.
\newblock Interscience Publishers, John Wiley \& Sons, New York, 1967.

\bibitem{RaTa}
K.~R. Rajagopal and L.~Tao.
\newblock {\em Mechanics of Mixtures}.
\newblock World Scientific, Singapore, 1995.

\bibitem{roenigk1987low}
K.~F. Roenigk and K.~F. Jensen.
\newblock Low pressure {CVD} of silicon nitride.
\newblock {\em J. Electrochem. Soc.}, 134(7):1777--1785, 1987.

\bibitem{smooke1982solution}
M.~D. Smooke.
\newblock Solution of burner-stabilized premixed laminar flames by boundary
  value methods.
\newblock {\em J. Comp. Phys.}, 48(1):72--105, 1982.

\bibitem{StoCar}
Y.~Stokes and G.~Carrey.
\newblock On generalised penalty approaches for slip, free surface and related
  boundary conditions in viscous flow simulation.
\newblock {\em Inter. J. Numer. Meth. Heat Fluid Flow}, 21:668--702, 2011.

\bibitem{Tartar1979compensated}
L.~Tartar.
\newblock Compensated compactness and applications to partial differential
  equations.
\newblock In {\em Nonlinear analysis and mechanics: {H}eriot-{W}att symposium},
  volume~4, pages 136--212, 1979.

\end{thebibliography}

\end{document}